\documentclass[ruled]{article}
\usepackage{geometry}

\geometry{verbose,lmargin=2.8cm,rmargin=2.8cm, bmargin=2cm, tmargin=2cm, marginparwidth=2cm}

\usepackage[latin9]{inputenc}
\usepackage{color}
\usepackage{mathtools}
\usepackage{multibib}
\newcites{sup}{References}
\usepackage[authoryear]{natbib}
\usepackage{todonotes}
\usepackage{amsmath}
\usepackage{mathrsfs}
\usepackage{accents}
\usepackage{float}
\usepackage{graphicx}
\usepackage{caption}
\usepackage{subcaption}
\usepackage{enumitem}  
\usepackage[unicode=true,pdfusetitle, bookmarks=true,bookmarksnumbered=false,bookmarksopen=false, breaklinks=false,pdfborder={0 0 1},backref=false,colorlinks=true] {hyperref}
\usepackage{setspace}
\usepackage{amsthm}
\usepackage{bbm}
\usepackage{amssymb}

\hypersetup{linkcolor=blue,citecolor=blue}

\makeatletter

\renewcommand\paragraph{\@startsection{paragraph}{4}{\z@}%
            {-2.5ex\@plus -1ex \@minus -.25ex}%
            {1.25ex \@plus .25ex}%
            {\normalfont\normalsize\bfseries}}

%%%%%%%%%%%%%%%%%%%%%%%%%%%%%% Textclass specific LaTeX commands.
\theoremstyle{plain}

\theoremstyle{plain}

%%%%%%%%%%%%%%%%%%%%%%%%%%%%%% LyX specific LaTeX commands.
\floatstyle{ruled}
\newfloat{algorithm}{tbp}{loa}
\providecommand{\algorithmname}{Algorithm}
\floatname{algorithm}{\protect\algorithmname}

%%%%%%%%%%%%%%%%%%%%%%%%%%%%%% User specified LaTeX commands.
\usepackage{algorithm}
\usepackage{dsfont}
\usepackage{algpseudocode}% to have input/output for algorithms
\usepackage{mathtools}

%%%%%%%%%%%%%%%%%%%%%%%%%

%\doublespacing
%\setlength{\parindent}{2em}
%\setlength{\parskip}{1em}

%\newtheorem{theorem}{Theorem}
%\newtheorem{prop}{Proposition}
%\newtheorem{lemma}{Lemma}

%%%%%%%%%%%%%%%%%%%%%%%%%%%%%%%%%%%%%%%%%%%%%%%%%%%%%%%%%%%%%%%%%%%%%%%%%%%%%%%%%%%%%%%%%%%%%%%%%%%%
%% Comands

\newcounter{daggerfootnote}

%\newcommand{\mg}[1]{\todo[color=red!40,size=\footnotesize]{ \textbf{M:}  #1}}
%\newcommand{\yc}[1]{\todo[color=green!40,size=\footnotesize]{ \textbf{Y:}  #1}}
% User-defined commands

% I defined these!

\newcommand{\setX}{\mathsf{X}}

\newcommand{\F}{\mathcal{F}}
\newcommand{\dd}{\mathrm{d}}
\newcommand{\bigO}{\mathcal{O}}
 %complexity
\newcommand{\R}{\mathbb{R}}
\renewcommand{\P}{\mathbb{P}}
\newcommand{\E}{\mathbb{E}}

\newcommand{\ind}{\mathbbm{1}}

% Makes it easier to define stuff
\newtheorem{remark}{Remark}
\newtheorem{theorem}{Theorem}
\newtheorem{lemma}{Lemma}
\newtheorem{proposition}{Proposition}

\newtheorem{corollary}{Corollary}
%{Condition}

%{Condition}

%{Assumption}

\makeatother

\title{Safety of particle filters: Some results   on the time evolution of particle filter estimates}

\date{}
\author{}

\begin{document}
\maketitle

\begin{center}
\vspace{-1.3cm}
\large{Mathieu Gerber\\
\vspace{0.3cm} University of Bristol}\\
\vspace{0.5cm}

\end{center}

\begin{abstract}
Particle filters (PFs) form a class of Monte Carlo algorithms that  propagate  over time a set of $N\geq 1$ particles which can be used to estimate, in an online fashion, the sequence of filtering distributions $(\hat{\eta}_t)_{t\geq 1}$ defined by a state-space model.  Despite the  popularity  of PFs, the study of the time evolution of their estimates has   received barely any attention   in the literature. Denoting by $(\hat{\eta}_t^N)_{t\geq 1}$ the PF estimate of $(\hat{\eta}_t)_{t\geq 1}$ and letting $\kappa\in (0,1/2)$, in this work  we first show that for any number of particles  $N$  it holds that, with probability one, we have $\|\hat{\eta}_t^N- \hat{\eta}_t\|\geq \kappa$ for infinitely many time instants $t\geq 1$, with $\|\cdot\|$   the Kolmogorov distance between probability distributions. Considering a simple filtering problem we then provide reassuring results concerning the ability of PFs to estimate jointly a finite set $\{\hat{\eta}_t\}_{t=1}^T$ of filtering distributions by studying the probability $\P(\sup_{t\in\{1,\dots,T\}}\|\hat{\eta}_t^{N}-\hat{\eta}_t\|\geq \kappa)$. Finally,  on the same toy filtering problem, we prove that   sequential quasi-Monte Carlo, a  randomized quasi-Monte Carlo version of PF algorithms, offers greater safety guarantees than PFs in the sense that, for this algorithm,  it holds that $\lim_{N\rightarrow\infty}\sup_{t\geq 1}\|\hat{\eta}_t^N-\hat{\eta}_t\|=0$ with probability one.
\end{abstract}

\section{Introduction}

\subsection{Context\label{sub:context}}
 
Particle filters (PFs) are  used for real-time  inference in state-space models (SSMs) in various applications, including robotics \citep{thrun2002particle,hailu2024theories}, self-driving cars \citep{berntorp2019particle,hafez2024safe}, assisted surgery \citep{kummert2021efficient} and ballistics objects tracking \citep{kim2023accelerated}.

 Denoting by $t\in\mathbb{N}:=\{1,2,\dots\}$ the time index and by $\hat{\eta}_t$ the conditional distribution of the state variable at  time $t$ given the available observations $\{y_s\}_{s=1}^t$, a PF is a Monte Carlo algorithm that propagates over time a set of $N\in\mathbb{N}$ particles that can be used to estimate, in an online fashion, the sequence of the so-called filtering distributions $(\hat{\eta}_t)_{t\geq 1}$. For instance, in  self-driving cars applications $\hat{\eta}_t$ is the conditional distribution of the position of the car at time $t$ given  its  GPS localisation   (known to have a precision of a few meters) and   its distance   with respect to some nearby landmarks, under the assumed SSM.

The popularity of PFs stems from the fact that this class of algorithms  can be deployed on a very large class of SSMs. In addition, it is widely believed that the quality of the PF estimate $\hat{\eta}_t^N$ of $\hat{\eta}_t$ does not deteriorate over time, a belief  supported by several theoretical analyses  showing that,  for some class of functions $\mathfrak{F}$ and constant $p\in\mathbb{N}$,  we have \citep[see][and references therein]{caffarel2024mathematical}  
\begin{align}\label{eq:Bound_the}
\lim_{N\rightarrow\infty} \sup_{t\geq 1}\E\big[|\hat{\eta}_t^N(f)-\hat{\eta}_t(f)|^p\big]=0,\quad\forall f\in\mathfrak{F}.
\end{align}

However, this type of results only provides a guarantee for the error that arises when   estimating $\hat{\eta}_t$   by $\hat{\eta}_t^N$ at a single time instant $t$, while in practice we are usually interested in estimating accurately   $\hat{\eta}_t$ for several time instants $t\in\{1,\dots,T\}$; for instance, a PF used for positioning in a self-driving car needs to provide a precise localization of the vehicle during its whole life time. In this context, instead of \eqref{eq:Bound_the} a more relevant theoretical guarantee for PFs would be that, in some sense, $\sup_{t\geq 1}|\hat{\eta}_t^N(f)-\hat{\eta}_t(f)|\rightarrow 0$ as $N\rightarrow\infty$ and  for all functions $f$ belonging to some class of functions.

 \subsection{Contributions of the paper}

In this paper we first show that such  a  theoretical guarantee  for PFs cannot exist.  Formally, denoting by $\|\cdot\|$ the Kolmogorov distance between probability distributions, we prove that  for any $\kappa\in (0,1/2)$, with probability one and for any  number of particles $N\geq 1$, we have $\|\hat{\eta}_t^N- \hat{\eta}_t\|\geq \kappa$ for infinitely many time instants $t\geq 1$. The fact that in some challenging situations a PF may fail to accurately estimate  some filtering distributions is not surprising: it is well-known that this problem  will usually  happen when the distance between two successive filtering distributions $\hat{\eta}_{t}$ and $\hat{\eta}_{t+1}$ is large,  e.g.~because the SSM is not time homogenous or because the observations $y_{t}$ and $y_{t+1}$ are very different from each other. However, and crucially,  we prove the above property of PFs on what is arguably the simplest filtering scenario, namely  for a one-dimensional and time-homogenous linear Gaussian SSM with observations $y_t=0$ for all $t\geq 1$.  As we will see in what follows, this negative result for PF estimates is trivial to prove, and arises from the very simple and intuitive reason that too much randomness is used to propagate the particle system over time. We stress that this problem is not limited to PFs   but is   common and intrinsic to any  sequential (importance) sampling algorithms, including the ensemble Kalman filter \citep{roth2017ensemble}.

For any given $\kappa\in(0,1)$, the PF estimate  $(\hat{\eta}_t^N)_{t\geq 1}$ of the sequence $(\hat{\eta}_t)_{n\geq 1}$ is   therefore necessarily such that    $\P(\sup_{t\geq 1}\|\hat{\eta}_t^N-\hat{\eta}_t\|\geq \kappa)=1$. Given the ever-increasing use of PFs in life critical applications,  notably due to the increasing prevalence of autonomous systems  \citep{ogunsina2024advanced} and the role that PFs play in these systems, it is then  crucial to understand the behaviour of the probability $\P(\sup_{t\in\{1,\dots,T\}}\|\hat{\eta}_t^N-\hat{\eta}_t\|\geq \kappa)$   as a function of the number of particles  $N$ and of the time horizon $T$.

The second contribution of this paper is to provide a first sharp result on the dependence of  the probability $\P(\sup_{t\in\{1,\dots,T\}}\|\hat{\eta}_t^N-\hat{\eta}_t\|\geq \kappa)$ to $N$ and $T$. Considering the  same filtering problem as the one described above, we prove that there exists a  constant $C\in(0,\infty)$ such that, for any $(\kappa,q)\in(0,1)^2$ and  any $T\geq 1$, the inequality $\P(\sup_{t\in\{1,\dots,T\}}\|\hat{\eta}_t^{N}-\hat{\eta}_t\|\geq \kappa)\leq  q$ holds for all $N\geq N_{T,\kappa,q}:=C v_\kappa \log(T/q)$, with $v_\kappa=\{1+\log(1+\kappa^{-1})\}^2 \kappa^{-2}$. We also establish that, for a given $\kappa$, the dependence of $N_{T,\kappa,q}$ on the two parameters $T$ and $q$ is  sharp. 

To the best of our knowledge,  prior to this work the only study of the probability $\P(\sup_{t\in\{1,\dots,T\}}\|\hat{\eta}_t^{N}-\hat{\eta}_t\|\geq \kappa)$ was given in \citet[][Corollary 14.5.7, page 449]{del2013mean}. In this reference, it is shown that the inequality $\P(\sup_{t\in\{1,\dots,T\}}\|\hat{\eta}_t^{N}-\hat{\eta}_t\|\geq \kappa)\leq  q$ holds for all $N\geq N'_{T,\kappa,q}:=C' \kappa^{-2} \log(1+T)(1+\log(1/q))$, with   $C'$ a finite constant. This result is however obtained under     strong  assumptions,  which are for instance not satisfied for the filtering problem  that we consider in this work. We also note that     $N'_{T,\kappa,q}$ has worse dependence on   the parameters $(T,q)$   than    $N_{T,\kappa,q}$, although its dependence on $\kappa$ is slightly better.  Lastly, and importantly,   it has not been realized (and therefore proved) in \citet{del2013mean} that    $N$ \textit{must} increase  with $T$ to ensure that the probability $\P(\sup_{t\in\{1,\dots,T\}}|\hat{\eta}_t^{N}(f)-\hat{\eta}_t(f)|\geq \kappa)$ remains bounded above by $q$ as $T$ increases, and thus that for any fixed $N\geq 1$ we have $\P(\sup_{t\geq 1}\|\hat{\eta}_t^{N}-\hat{\eta}_t\|\geq \kappa)=1$.

Despite  the  reassuring results we proved regarding the time evolution of PF estimates,  it is not entirely satisfactory to delegate (life-critical)  filtering tasks to   algorithms that we know will fail infinitely often. Following a previous discussion, to resolve this limitation of PFs  some de-randomization is  needed, and a possible approach to do so is to replace its underpinning  independent $\mathcal{U}(0,1)$ random variables by randomized quasi-Monte   point sets. This idea has been proposed and studied  in \citet{gerber2015sequential} with the aim of obtaining a filtering algorithm, called sequential quasi-Monte Carlo (SQMC), which has  a faster convergence rate (as $N\rightarrow\infty$) than PFs. Letting $(\tilde{\eta}^N_t)_{t\geq 1}$ denote the estimate of $(\hat{\eta}_t)_{t\geq 1}$ obtained with  SQMC, the third contribution of this paper is to show, for the one-dimensional filtering problem outlined above, that we have  $\lim_{N\rightarrow\infty}\sup_{t\geq 1}\| \tilde{\eta}^N_t-\hat{\eta}_t\|=0$ with probability one.

\subsection{Notation and organization of the paper}

In what follows all the random variables are defined on the probability space $(\Omega,\F,\P)$, and for any  $s\in\mathbb{N}$  we let $\mathcal{B}(\R^s)$ denote  the Borel $\sigma$-algebra on $\R^s$, $\mathcal{P}(\R^s)$ denote the set of probability measures on $(\R^s,\mathcal{B}(\R^s))$, $\lambda_s$ denote   the Lebesgue measure on $\R^s$,   and  for any point set $\{u^n\}_{n=1}^N$ in $[0,1)^s$  we denote by $D^*_s\big(\{u^n\}_{n=1}^N\big)$ its star discrepancy, that is
\begin{align*}
D^*_s\big(\{u^n\}_{n=1}^N\big)=\sup_{b\in [0,1)^s}\Big|\frac{1}{N}\sum_{n=1}^N\ind_{[0,b)}(u^n)-\lambda_s\big([0,b)\big)\Big|.
\end{align*}

Next, for any probability measure  $\pi\in\mathcal{P}(\R)$  we let $F_\pi$ denote   its cumulative density function and  $F^{-1}_\pi$  denote the (generalized-)inverse of $F_\pi$.   In this notion,  the Kolmogorov distance between the probability measures $\pi_1,\pi_2\in\mathcal{P}(\R)$ is given by $\|\pi_1-\pi_2\|=\sup_{a\in\R}|F_{\pi_1}(a)-F_{\pi_2}(a)|$, and we denote by $\|\pi_1-\pi_2\|_\mathrm{D}$ the discrepancy metric between these two probability measures, that is we let
\begin{align*}
\|\pi_1-\pi_2\|_\mathrm{D}=\sup_{-\infty< a<b<\infty}\big|\pi_1([a,b])-\pi_2([a,b])\big|.
\end{align*}
We recall the reader that these two  norms on $\mathcal{P}(\R)$ are   related together through the inequalities
\begin{align}\label{eq:KD}
\|\pi_1-\pi_2\|\leq \|\pi_1-\pi_2\|_\mathrm{D}\leq 2 \|\pi_1-\pi_2\|,\quad\forall  \pi_1,\pi_2\in\mathcal{P}(\R).
\end{align}
With the exception of Propositions \ref{prop:PF} and \ref{prop:lower}, the results presented below are expressed in term of the Kolmogorov metric, which is arguably more popular than the discrepancy metric. These results    can however be easily expressed in term of the latter  metric by using \eqref{eq:KD}.  Propositions \ref{prop:PF} and \ref{prop:lower} are expressed using the discrepancy metric for reasons that will be provided   in due course (see Remarks \ref{rem:prop:PF} and \ref{rem:prop:lower}).

To introduce some further notation we let $\pi\in\mathcal{P}(\R)$,   $f:\R\rightarrow\R$ be   a measurable function  and  $K$ be a    (potentially  un-normalized) kernel acting on $(\R,\mathcal{B}(\R))$. Then, we let $\pi(f)=\int_{\R} f(x)\pi(\dd x)$,   $\pi K$ denote the  measure on $\mathcal{B}(\R)$  such that $\pi K(A)=\int_{\R} K(x,A)\pi(\dd x)$ for all $A\in\mathcal{B}(\R)$, and    $(K^k)_{k\geq 1}$ denote the sequence of (potentially  un-normalized) kernels  acting on $(\R,\mathcal{B}(\R))$ defined by 
\begin{align*}
K^k(x,A)=\int_{\R}K(x,\dd x') K^{k-1}(x',A),\quad (x,A)\in\R\times \mathcal{B}(\R),\quad  k\geq 1.
\end{align*}
Lastly, below we let  $\Phi(\cdot)$ denote   the cumulative density function of the $\mathcal{N}(0,1)$ distribution.

The rest of the paper is organized as follows. In the next subsection we provide a brief reminder on  scrambled  $(t,s)$-sequences. Section \ref{sec:model_Algo} introduces formally the filtering problem and algorithms studied in this work, and our main results are presented in Section \ref{sec:main}. Main results   not proved in Section \ref{sec:main} are proved in Section \ref{sec:proofs}, while Section \ref{app:aux} contains the proof of Proposition \ref{prop:stability_model} presented Section \ref{sec:model_Algo} as well as the proof of Lemma \ref{lemma:key1}   stated Section \ref{sec:proofs}, which is the main technical contribution of the paper.

\subsection{Reminder on scrambled \texorpdfstring{$(t,s)$}{Lg}-sequences\label{sub:digital}}

Let $t\geq 0$, $s\geq 1$ and $b\geq 2$ be three integers, and let
\begin{align*}
\left\{\prod_{j=1}^s\left[a_j b^{-d_j},(a_j+1)b^{-d_j}\right)\subseteq [0,1)^s,\, a_j,\,d_j\in\mathbb{N},\, a_j< b^{d_j},
\, j=1,...,s\right\}
\end{align*}
be the set of all $b$-ary boxes.  Then, for an integer $m\geq t$  the point set $\{u^n\}_{n=1}^{b^m}$ is called a $(t,m,s)$-net in base $b$ if every $b$-ary box of volume $b^{t-m}$ contains exactly $b^t$ points, while a sequence  $(u^n)_{n\geq 1}$ of points in $[0,1)^s$ is  called a $(t,s)$-sequence in base $b$ if, for any integers $a\geq 0$ and $m\geq  t$, the point set $\{u^n\}_{n=ab^m+1}^{(a+1)b^m}$ is a $(t,m,s)$-net in base $b$. 

From the above  definitions it should be clear   that, from a practical perspective,   $(t,s)$-sequences in base $b$ with $t=0$ and $b=2$ are preferable, but such  sequences exist only for   $s\in\{1,2\}$  \citep[][Corollary 4.24, page 62]{Niederreiter1992}. In the context of this work, the crucial property of a $(t,s)$-sequence  $(u^n)_{n\geq 1}$ in base $b$  is   that, for some constant $C_{s,t,b}\in(0,\infty)$ depending only on $s$, $t$ and $b$, we have \citep[see][Section 4.1]{Niederreiter1992}
\begin{align}\label{eq:di}
D^*_s\big(\{u^n\}_{n=1}^N\big)\leq C_{s,t,b}\frac{ \log (N)^s}{N},\quad\forall N\geq b^t.
\end{align}

Finally, a sequence  $(U^n)_{n\geq 1}$ of $(0,1)^s$-valued random variables is a scrambled  $(t,s)$-sequence in base $b$ if, $\P$-a.s.,  $(U^n)_{n\geq 1}$ is a $(t,s)$-sequence in base $b$ and if $U^n\sim\mathcal{U}(0,1)^s$ for all $n\geq 1$. Consequently,  a scrambled  $(t,s)$-sequence in base $b$  is a sequence  $(U^n)_{n\geq 1}$ of dependent $\mathcal{U}(0,1)^s$ random variables such that  $D^*_s\big(\{U^n\}_{n=1}^N\big)\leq  C_{s,t,b} N^{-1}\log(N)^s$ with $\P$-probability one and for all $N\geq 1$, and with the constant $C_{s,t,b}$ as in \eqref{eq:di}. A scrambled  $(t,s)$-sequence is obtained by randomizing, or scrambling, a $(t,s)$-sequence, and various scrambling methods have been proposed in the literature. It is worth mentioning that the first $N$ points of a scrambled sequence can be generated in $\bigO(N\log (N))$ operations,    but the  memory requirements of the algorithm needed to perform this task depend on the scrambling procedure that is used \citep{hong2003algorithm}. We refer the reader to Chapter 3 in \citet{dick2010digital} and to   \citet{practicalqmc}   for a discussion on scrambling algorithms.

\section{Model and filtering algorithms\label{sec:model_Algo}}

\subsection{The Model\label{sub:model}}

We let $(Y_t)_{t\geq 1}$ be a sequence of $\R$-valued random variables and throughout this work we consider the SSM which assumes that, for some constants  $\rho\in\R$ and   $(\sigma,c)\in(0,\infty)^2$, and  some  distribution $\eta_1\in\mathcal{P}(\R)$, the sequence  $(Y_t)_{t\geq 1}$ is such that
\begin{align}\label{eq:SSM}
Y_t=X_t+c^{-1/2}Z_t,\quad X_{t+1}=\rho X_{t}+\sigma W_{t+1},\quad\forall t\geq 1
\end{align}
with $X_1\sim \eta_1$ and where the $Z_t$'s and the $W_{t+1}$'s are independent $\mathcal{N}(0,1)$ random variables. 

We now let $(y_t)_{t\geq 1}$ be a sequence in $\R$ and, for all $t\geq 1$, we denote by $\hat{\eta}_t$  the  filtering distribution of $X_t$, that is the conditional distribution of $X_t$ given that $Y_s=y_s$ for all $s\in\{1,\dots,t\}$ under the model \eqref{eq:SSM}. In addition,  for all $t\geq 2$ we denote by $\eta_t$  the predictive distribution  of $X_t$, that is the conditional distribution of $X_t$ given that $Y_s=y_s$ for all $s\in\{1,\dots,t-1\}$ under the model \eqref{eq:SSM}.

We assume henceforth that $y_t=0$ for all $t\geq 1$ and, for all $z\in\R$, we let $M(z,\dd x)= \mathcal{N}(\rho z,\sigma^2)$, $G(z)=e^{-c z^2/2}$ and $Q(z,\dd x)=M(z,\dd x)G(x)$. With this notation and assumption on $(y_t)_{t\geq 1}$, the two sequences of distributions $(\hat{\eta}_t)_{t\geq 1}$ and $(\eta_t)_{t\geq 1}$ are defined by
\begin{align}\label{eq:seq_1}
\hat{\eta}_1(\dd x_1)=\frac{G(x_1)\eta_1(\dd x_1)}{\eta_1(G)}
\end{align}
and by
\begin{align}\label{eq:seq_2}
\eta_t(\dd x_t)= \hat{\eta}_{t-1}M(\dd x_t),\quad  \hat{\eta}_t(\dd x_t)=\frac{\hat{\eta}_1 Q^{t-1}(\dd x_t)}{\hat{\eta}_1 Q^{t-1}(\R)},\quad t\geq 2.
\end{align}

The following proposition shows that the sequences $(\hat{\eta}_t)_{t\geq 1}$ and $(\eta_t)_{t\geq 1}$ admit  a well-defined limit in $\mathcal{P}(\R)$ and that, as $t\rightarrow\infty$, the impact of the initial distribution $\eta_1$ on $\hat{\eta}_t$ and on $\eta_t$ vanishes exponentially fast.

\begin{proposition}\label{prop:stability_model}
There exist  constants  $(C_\star,\sigma^2_\infty)\in (0,\infty)^2$ and $\epsilon_\star\in(0,1)$, where only $C_\star$  dependents on $\eta_1$, such that   
\begin{align*}
 \|\hat{\eta}_t-\mathcal{N}(0,\sigma^2_\infty)\|\leq C_\star\epsilon_\star^t,\quad\big\|\eta_{t}-\mathcal{N}(0,\rho^2\sigma^2_\infty+\sigma^2)\big \|\leq C_\star\epsilon_\star^t,\quad   \forall t\geq 1.
\end{align*}
\end{proposition}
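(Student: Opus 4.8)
The plan is to exploit the fact that, because every observation equals zero, the filtering recursion \eqref{eq:seq_2} is \emph{time-homogeneous}: writing $\Psi(\pi)(\dd x)=\pi Q(\dd x)/\pi Q(\R)$ we have $\hat{\eta}_t=\Psi^{t-1}(\hat{\eta}_1)$ for all $t\ge1$, while $\eta_t=\hat{\eta}_{t-1}M$ for $t\ge2$. I would first dispose of the predictive bound: since $\pi\mapsto\pi M$ acts by the deterministic rescaling $x\mapsto\rho x$ followed by convolution with $\mathcal N(0,\sigma^2)$, and both operations are $1$-Lipschitz for the Kolmogorov distance (convolution with a fixed law contracts $\|\cdot\|$, and a monotone rescaling leaves it unchanged, the case $\rho=0$ being trivial), one gets $\|\eta_t-\mathcal N(0,\rho^2\sigma^2_\infty+\sigma^2)\|=\|\hat{\eta}_{t-1}M-\mathcal N(0,\sigma^2_\infty)M\|\le\|\hat{\eta}_{t-1}-\mathcal N(0,\sigma^2_\infty)\|$. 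Thus the predictive estimate inherits the filtering bound up to replacing $t$ by $t-1$, and it remains only to control $\|\hat{\eta}_t-\mathcal N(0,\sigma^2_\infty)\|$. Here $\sigma^2_\infty$ is defined as the unique positive fixed point of the scalar Riccati map $R(v)=\frac{\rho^2v+\sigma^2}{1+c(\rho^2v+\sigma^2)}$; existence, uniqueness, and the identity $R'(\sigma^2_\infty)=\rho^2/(1+c(\rho^2\sigma^2_\infty+\sigma^2))^2=:\lambda^2<1$ follow because $R$ is increasing, concave and bounded on $[0,\infty)$, and one checks directly that $\mathcal N(0,\sigma^2_\infty)$ and $\mathcal N(0,\rho^2\sigma^2_\infty+\sigma^2)$ are the fixed points of the update and prediction steps.

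The core of the argument is an explicit representation of $\hat{\eta}_t$. I would show by induction that the iterated kernel $Q^{t-1}(z,\dd x)$ has an unnormalised jointly Gaussian density, so that
\[
Q^{t-1}(z,\dd x)=\ell_{t-1}(z)\,\mathcal N\!\big(x;a_{t-1}z,s_{t-1}^2\big)\,\dd x,
\]
where the likelihood factor satisfies $\ell_{t-1}(z)\propto\exp(-\theta_{t-1}z^2)$ with $\theta_{t-1}$ bounded below by a positive constant, and $(a_{t-1},s_{t-1}^2)$ obey the associated Gaussian recursions. Substituting into \eqref{eq:seq_2} exhibits $\hat{\eta}_t$ as the Gaussian mixture $\int\mathcal N(a_{t-1}z,s_{t-1}^2)\,\nu_{t-1}(\dd z)$ with mixing measure $\nu_{t-1}(\dd z)\propto\ell_{t-1}(z)\hat{\eta}_1(\dd z)$. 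Two facts drive the convergence: the variance recursion iterates $R$ towards $\sigma^2_\infty$, giving $|s_{t-1}^2-\sigma^2_\infty|\le C\lambda^{2(t-1)}$, while the mean coefficient $a_{t-1}$ is a product of per-step factors converging to $\lambda$, whence $|a_{t-1}|\le C\lambda^{t-1}$. Crucially, the Gaussian factor $\ell_{t-1}$ truncates the tails of the arbitrary prior $\hat{\eta}_1$, so that $\int|z|\,\nu_{t-1}(\dd z)$ is finite and bounded uniformly in $t$ (by a constant depending on $\eta_1$ only); this is what lets the result hold for every $\eta_1\in\mathcal P(\R)$ with no moment assumption.

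With this representation the Kolmogorov bound follows by a triangle inequality through $\mathcal N(0,s_{t-1}^2)$. For the variance I would use $\|\mathcal N(0,s^2)-\mathcal N(0,\sigma^2_\infty)\|\le C|s^2-\sigma^2_\infty|$ (two centred Gaussian cdfs differ by a constant times the gap of their variances), and for the means the perturbation estimate $\|\int\mathcal N(a_{t-1}z,s_{t-1}^2)\nu_{t-1}(\dd z)-\mathcal N(0,s_{t-1}^2)\|\le(s_{t-1}\sqrt{2\pi})^{-1}|a_{t-1}|\int|z|\nu_{t-1}(\dd z)$, which comes from $\sup_a|\Phi((a-a_{t-1}z)/s_{t-1})-\Phi(a/s_{t-1})|\le|a_{t-1}z|\sup|\Phi'|/s_{t-1}$ and Fubini. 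Combining the two terms with the decay rates of $a_{t-1}$ and $s_{t-1}^2$ yields $\|\hat{\eta}_t-\mathcal N(0,\sigma^2_\infty)\|\le C_\star\epsilon_\star^t$ for any fixed $\epsilon_\star\in(|\lambda|,1)$, with $C_\star$ depending on $\eta_1$; the finitely many small-$t$ terms are absorbed by enlarging $C_\star$ via the trivial bound $\|\cdot\|\le1$.

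The main obstacle I anticipate is the quantitative Riccati analysis underlying the second paragraph: establishing geometric convergence $s_{t-1}^2\to\sigma^2_\infty$ together with $|a_{t-1}|\lesssim\lambda^{t-1}$, with an explicit contraction constant genuinely $<1$. The naive per-step bound $|\rho|/(1+c\sigma^2)$ need not be $<1$ when $|\rho|$ is large, so the contraction must be read off at the fixed point, where the Riccati identity gives $\lambda^2=(1-\sigma^2/v_\infty)/(1+cv_\infty)<1$ with $v_\infty=\rho^2\sigma^2_\infty+\sigma^2$; propagating this uniformly along the whole trajectory, rather than only asymptotically, together with the uniform-in-$t$ control of $\int|z|\nu_{t-1}(\dd z)$ for an arbitrary prior, is the delicate part of the bookkeeping. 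Everything else reduces to standard Gaussian integral manipulations.
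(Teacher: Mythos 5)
Your proposal is correct and follows essentially the same route as the paper's proof: a closed-form Gaussian representation of the iterated kernel $Q^{t-1}$ (the paper's Lemma 4), geometric decay of the mean coefficient together with convergence of the variance iterates, the Gaussian likelihood factor truncating the tails of an arbitrary $\eta_1$ so that no moment assumption is needed, and contraction of the Markov step $M$ in Kolmogorov distance to transfer the filtering bound to the predictive one. The differences are presentational only — you parametrize the variance evolution by the Riccati map and read the contraction off its fixed point, while the paper tracks the coupled recursions $(c_k,\rho_k,\sigma_k^2)$ and settles the large-$|\rho|$ case (the ``delicate bookkeeping'' you flag) by showing $(c_k)$ increases monotonically to a fixed point strictly above the critical threshold, so the per-step factors are eventually uniformly below some $\epsilon_\star<1$.
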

\begin{remark}
When $\eta_1$ is a Gaussian distribution the conclusion of the lemma can be obtained from the results in \citet{del2023theoretical}.
\end{remark}
 
We stress that, despite the simplicity of the filtering problem that we consider in this work,   little is known about the finite $N$ behaviour of the PF estimates $(\hat{\eta}_t^N )_{t\geq 1}$ and  $(\eta_t^N )_{t\geq 1}$ of  the two sequences $(\hat{\eta}_t)_{t\geq 1}$ and  $(\eta_t )_{t\geq 1}$.  Notably, using Corollary 3 in \citet{caffarel2024mathematical}  only allows to establish  that, for all functions $f$ in some class  of  functions (containing unbounded functions)  and under the assumption that $|\rho|<1$, we have  $\sup_{t\geq 1}\E\big[|\eta_t^N(f)-\eta_t(f)|]\leq C N^{-\beta/2}$   for some unknown constants $C$ and $\beta\in(0,1]$.

\subsection{Particle filters and sequential-quasi Monte Carlo\label{sub:PF}}

In the specific case where $\eta_1=\mathcal{N}(\mu_1,\sigma_1^2)$ for some constants $\mu_1\in\R$ and $\sigma_1^2\in(0,\infty)$,  the   SSM  defined  in  \eqref{eq:SSM} is a linear Gaussian SSM and the two sequences $(\hat{\eta}_t)_{t\geq 1}$ and $(\eta_t)_{t\geq 1}$ are sequences of Gaussian distributions whose means and variances can be computed using the Kalman filter.  For other choices of initial distribution $\eta_1$ these two sequences are intractable and therefore need  to be approximated.
 
Algorithm \ref{algo:PF}  describes a genetic filtering algorithm that can be used to perform this task. When in Algorithm \ref{algo:PF}  the $U_t^n$'s are independent $\mathcal{U}(0,1)^2$ random variables we recover the bootstrap PF with multinomial resampling, which is arguably the simplest PF algorithm.  By contrast, we recover SQMC when, in Algorithm \ref{algo:PF}, for all $t\geq 1$ the set $\{U_t^n\}_{n=1}^N$ is the first $N$ points of a scrambled $(t,2)$-sequence in based $b$ (see Section \ref{sub:digital} for a definition), and these random sets are independent. 

%It can be shown that there exists a finite universal constant $C$ such that, for all $t\geq 1$ and any square integrable function $f:[0,1]^2\rightarrow\R$, the variance of the estimate $\frac{1}{N}\sum_{n=1}^N f(U_t^n)$ of $\int_{[0,1]^2} f(u)\dd u$ is smaller than $C$ times the variance  we would have obtained if   $\{U_t^n\}_{n=1}^N$ was a collection of independent $\mathcal{U}(0,1)^2$ random numbers \citep[see][and references therein]{gerber2015integration}. This latter property  is important as it ensures that if for   $p=2$ the result in \eqref{eq:Bound_the} holds for the vanilla PF  then it also holds for SQMC. On the other hand, for cost per iteration of SQMC is $\mathcal{O}(N\log(N))$ against $\mathcal{O}(N)$ for PFs.

\begin{algorithm}[t]
 
\begin{algorithmic}[1]

\vspace{0.1cm}

\State  let $\{U_1^n=(U_{1,1}^n,U_{1,2}^n)\}_{n=1}^N$ be a random point set in $(0,1)^2$
\vspace{0.1cm}

\State let $X_1^n=F^{-1}_{\eta_1}(U_{1,1}^n)$ and  $W_1^n=G(X_1^n)/\sum_{m=1}^N G(X_1^m)$
\vspace{0.1cm}

\State let $\eta_1^N=\frac{1}{N}\sum_{n=1}^N  \delta_{\{X_1^n\}}$ and  $\hat{\eta}_1^N=\sum_{n=1}^N W_1^n \delta_{\{X_1^n\}}$

\For{$t\geq 2$}
\vspace{0.1cm}

\State  let $\{U_t^n=(U_{t,1}^n,U_{t,2}^n)\}_{n=1}^N$ be a random point set in $(0,1)^2$
\vspace{0.1cm}

\State let  $\hat{X}_{t-1}^n=F^{-1}_{\hat{\eta}_{t-1}^N}(U_{t,1}^n)$ \hfill [resampling] 

\State\label{mut}let  $X_t^n=F^{-1}_{M(\hat{X}_{t-1}^n,\dd x)}(U_{t,2}^n)$ \hfill [mutation] 

\State let $W_t^n=G(X_t^n)/\sum_{m=1}^N G(X_t^m)$
\State let $\eta_t^N=\frac{1}{N}\sum_{n=1}^N  \delta_{\{X_t^n\}}$ and  $\hat{\eta}_t^N=\sum_{n=1}^N W_t^n \delta_{\{X_t^n\}}$ 

\EndFor
\end{algorithmic}
\caption{Generic   filtering algorithm \\ (Operations with index $n$ must be performed for all $n\in \{1,\dots,N\}$)\label{algo:PF}}
\end{algorithm}

\section{Main results\label{sec:main}}
 
\subsection{Almost sure behaviour of particle filters\label{sub:AS}}

The following proposition is the  first result announced in the introductory section (see also Remark \ref{rem:prop:PF}):

\begin{proposition}\label{prop:PF}
Let $N\geq 1$ and consider Algorithm \ref{algo:PF} where    $\big(\{U_{t,2}^n\}_{n=1}^N\big)_{t\geq 1}$ is a sequence of independent sets of $N$   independent $\mathcal{U}(0,1)$ random variables. Then,  
\begin{align}\label{eq:PF_instable}
\P\big(\hat{\eta}_t^N\big([a,b] \big)=0,\,\,i.o.\big)=\P\big( \eta_t^N\big([a,b] \big)=0,\,\,i.o.\big)=1, \quad  \forall [a,b]\subset \R
\end{align}
and we have
\begin{align*}
\P\big(\|\hat{\eta}_t^N-\hat{\eta}_t\|_{\mathrm{D}}\geq \kappa,\,\,i.o.\big)=\P\big(\|\eta_t^N-\eta_t\|_{\mathrm{D}}\geq \kappa,\,\,i.o.\big)=1, \quad\forall\kappa\in (0,1).
\end{align*}
\end{proposition}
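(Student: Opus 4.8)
The plan is to prove both displays simultaneously by reducing them to a single sequence of events and then running a conditional Borel--Cantelli argument. First I would observe that, since $G(x)=e^{-cx^2/2}>0$ for every $x\in\R$, the weights $W_t^n$ are strictly positive, so $\hat{\eta}_t^N$ and $\eta_t^N$ are supported on exactly the same atoms $\{X_t^n\}_{n=1}^N$. Hence for any fixed $[a,b]\subset\R$ the events $\{\hat{\eta}_t^N([a,b])=0\}$ and $\{\eta_t^N([a,b])=0\}$ both coincide with
\[
A_t:=\big\{X_t^n\notin[a,b]\ \text{for all }n\in\{1,\dots,N\}\big\},
\]
so it suffices to show $\P(A_t\ \text{i.o.})=1$ for every fixed interval.

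The key step is a one-step lower bound on the conditional probability of $A_t$ that is uniform in the past. Let $\mathcal{F}_t$ be the $\sigma$-algebra generated by all variables produced by Algorithm \ref{algo:PF} up to time $t$, and set $\mathcal{G}_t=\sigma(\mathcal{F}_{t-1},\{U_{t,1}^n\}_n)$, so that given $\mathcal{G}_t$ the resampled particles $\{\hat{X}_{t-1}^n\}_n$ are fixed while the mutation variables $\{U_{t,2}^n\}_n$ stay i.i.d.\ $\mathcal{U}(0,1)$ and independent of $\mathcal{G}_t$. Because line \ref{mut} inverts $M(\hat{X}_{t-1}^n,\dd x)=\mathcal{N}(\rho\hat{X}_{t-1}^n,\sigma^2)$, we have $X_t^n=\rho\hat{X}_{t-1}^n+\sigma\Phi^{-1}(U_{t,2}^n)$, so conditionally on $\mathcal{G}_t$ the $X_t^n$ are independent with $X_t^n\sim\mathcal{N}(\rho\hat{X}_{t-1}^n,\sigma^2)$. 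Using that the mass a $\mathcal{N}(\mu,\sigma^2)$ law assigns to a fixed window $[a,b]$ is maximised over $\mu$ at the centre $\mu=(a+b)/2$, I would bound $\P(X_t^n\in[a,b]\mid\mathcal{G}_t)\le p^*:=2\Phi\big((b-a)/(2\sigma)\big)-1<1$ uniformly in $n$ and in $\hat{X}_{t-1}^n$. Conditional independence then gives
\[
\P(A_t\mid \mathcal{G}_t)=\prod_{n=1}^N\P\big(X_t^n\notin[a,b]\mid\mathcal{G}_t\big)\ge(1-p^*)^N=:\delta>0,
\]
and the tower property yields $\P(A_t\mid\mathcal{F}_{t-1})\ge\delta$ for all $t\ge2$.

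With this uniform bound the first display is immediate. The events $A_t$ are dependent, so rather than the ordinary second Borel--Cantelli lemma I would invoke L\'evy's conditional extension, which asserts that $\{A_t\ \text{i.o.}\}=\{\sum_t\P(A_t\mid\mathcal{F}_{t-1})=\infty\}$ almost surely. Since $\sum_{t\ge2}\P(A_t\mid\mathcal{F}_{t-1})\ge\sum_{t\ge2}\delta=\infty$, we get $\P(A_t\ \text{i.o.})=1$, which is \eqref{eq:PF_instable}.

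For the second display I would use Proposition \ref{prop:stability_model}. Fix $\kappa\in(0,1)$. Since $\eta_t$ converges in Kolmogorov distance to the non-degenerate law $\mathcal{N}(0,\rho^2\sigma_\infty^2+\sigma^2)$, choose a wide enough interval $[a,b]$ with $\mathcal{N}(0,\rho^2\sigma_\infty^2+\sigma^2)([a,b])>\kappa$; the exponential convergence then provides $t_0$ with $\eta_t([a,b])>\kappa$ for all $t\ge t_0$. On the event $A_t$ for this interval and for $t\ge t_0$,
\[
\|\eta_t^N-\eta_t\|_{\mathrm{D}}\ge\big|\eta_t^N([a,b])-\eta_t([a,b])\big|=\eta_t([a,b])>\kappa,
\]
and since $\P(A_t\ \text{i.o.})=1$ this holds for infinitely many $t\ge t_0$ almost surely; the same argument applied to $\hat{\eta}_t\to\mathcal{N}(0,\sigma_\infty^2)$ (with $\sigma_\infty^2>0$) handles $\hat{\eta}_t^N$. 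I expect the main obstacle to be the second paragraph, namely securing a lower bound on $\P(A_t\mid\mathcal{F}_{t-1})$ that is genuinely uniform in the random resampled locations $\hat{X}_{t-1}^n$, so that the temporal dependence is rendered harmless and the conditional Borel--Cantelli lemma applies; the remaining steps are a short deterministic reduction combined with the stability of $(\eta_t)$ and $(\hat{\eta}_t)$ already established in Proposition \ref{prop:stability_model}.
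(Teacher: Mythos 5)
Your proposal is correct and follows essentially the same route as the paper's proof: a uniform conditional lower bound on the probability that all particles escape $[a,b]$ at the mutation step, followed by the conditional (L\'evy/Durrett) second Borel--Cantelli lemma for \eqref{eq:PF_instable}, and then Proposition \ref{prop:stability_model} to convert escape events into discrepancy lower bounds. The only differences are cosmetic: you bound the Gaussian mass of $[a,b]$ by maximizing over the mean, $p^*=2\Phi\big((b-a)/(2\sigma)\big)-1$, where the paper splits into the cases $\rho x'\in[a,b]$ and $\rho x'\notin[a,b]$, and you note the exact equality of supports of $\eta_t^N$ and $\hat{\eta}_t^N$ where the paper only needs the one-sided implication $\eta_t^N([a,b])=0\implies\hat{\eta}_t^N([a,b])=0$.
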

\begin{remark}\label{rem:prop:PF}
Using \eqref{eq:KD}, it follows   that if we replace the discrepancy metric $\|\cdot\|_{\mathrm{D}}$ by the Kolmogorov metric $\|\cdot\|$ then the conclusion of Proposition \ref{prop:PF} holds only for all $\kappa\in (0,1/2)$.
\end{remark}

\subsubsection{Proof of Proposition \ref{prop:PF}\label{sub:proof_propPF}}

\begin{proof}[Proof of Proposition \ref{prop:PF}]
To prove the first statement of the proposition  we let $\F^N_0=\{0,\Omega\}$ be the trivial $\sigma$-algebra,
 \begin{align*}
 \F^N_t=\sigma\Big( \big\{(X_s^n,\hat{X}_s^n),\,n=1,\dots,N,\,\, s=1,\dots,t\Big),\quad\forall t\geq 1
\end{align*} 
and $[a,b]\subset\R$  be     arbitrary. Then, letting $\varrho=\min\big\{1-\Phi(2(b-a)/\sigma),1/2\big\}>0$, we easily check  that
\begin{align*}
\inf_{x':\,\rho x'\in[a,b]} M\big(x', [a,b]^c\big)\geq  1-\Phi\big(2(b-a)/\sigma\big)\geq\varrho ,\quad \inf_{x':\,\rho x'\not\in[a,b]} M\big(x', [a,b]^c\big)\geq \frac{1}{2} \geq \varrho
\end{align*}
from which we deduce that
\begin{align}\label{eq:is_PF}
\P\big(\eta_t^N([a,b])=0\big|\F^N_{t-1}\big)\geq \varrho^N>0,\quad\forall t\geq 1,\quad \P-a.s.
\end{align}
By using the second Borel-Cantelli lemma for dependent events given in \citet[][Theorem 4.3.4, page 225]{durrett2019probability}, it follows from \eqref{eq:is_PF}   that with $\P$-probability one we have $\eta_t^N([a,b])=0$ infinitely often.  Since we have the implication $\eta_t^N([a,b])=0\implies \hat{\eta}_t^N([a,b])=0$, the proof of   \eqref{eq:PF_instable} is complete.

To prove the second statement of the proposition we let $\kappa\in (0,1)$ be arbitrary, and  we let $a_\kappa\in(0,\infty)$ and $T_\kappa\in\mathbb{N}$ be such that  $\hat{\eta}_t([-a_\kappa,a_\kappa])\geq \kappa$  for all $t\geq T_\kappa$. Remark that such constants $a_\kappa$ and $T_\kappa$ exist by Proposition \ref{prop:stability_model}. Then, for any $t\geq T_\kappa$ we have the implication 
\begin{align*}
\hat{\eta}_t^N([-a_\kappa,a_\kappa])=0\implies \|\hat{\eta}_t^N-\hat{\eta}_t\|_{\mathrm{D}}\geq |\hat{\eta}_t^N([a_\kappa,a_\kappa])-\hat{\eta}_t([-a_\kappa, a_\kappa])| \geq \kappa
\end{align*}
and thus it follows from \eqref{eq:PF_instable} that    $\P\big(\|\hat{\eta}_t^N-\hat{\eta}_t\|_{\mathrm{D}}\geq \kappa,\,\,i.o.\big)=1$. Following similar calculations  we obtain that $\P\big(\|\eta_t^N-\eta_t\|_{\mathrm{D}}\geq \kappa,\,\,i.o.\big)=1$, completing the prove  of the proposition.
\end{proof}

\subsubsection{Some remarks\label{sub:Disc_PF}}

The result stated in \eqref{eq:PF_instable} is the key result of Proposition \ref{prop:PF},  the second part of the proposition  following directly from \eqref{eq:PF_instable} and    Proposition \ref{prop:stability_model}.

Inspecting the proof of Proposition \ref{prop:PF} reveals that
the negative result  \eqref{eq:PF_instable}  concerning the behaviour of PFs  arises for a simple and intuitive reason. On the one hand, since for all $t\geq 1$ the uniform random variables $\{U_{t,2}^n\}_{n=1}^N$ used by a PF are independent, for any interval $[a,b]\subset\R$ there exists a constant $\varrho>0$ such that, after each mutation step of the algorithm (Line \ref{mut} in Algorithm \ref{algo:PF}), all the particles are outside  the interval $[a,b]$ with  probability at least $\varrho^N$. On the other hand, because the different mutation steps of a PF  rely on  independent sets of uniform random variables, with probability one this event occurs infinity often, by a suitable version of the second Borel-Cantelli lemma.

Following the discussion in the previous paragraph, and more formally following  the calculations in Section \ref{sub:proof_propPF}, it should be clear that the result in \eqref{eq:PF_instable} is not specific to the considered filtering problem but will hold in general. Notably, the problem identified in \eqref{eq:PF_instable} is  entirely caused by the mutation steps of PF algorithms. This observation  implies, for instance, that \eqref{eq:PF_instable}  still holds   if  for all $t\geq 1$ we replace   the function $G(x)=e^{-c x^2/2}$ by any  measurable function $G_t:\R\rightarrow (0,\infty)$. We also stress that \eqref{eq:PF_instable}  is not due to the fact that we consider an unbounded   state-space.  To illustrate this point  let $\setX=[-l,l]$ for some  constant $l\in (0,\infty)$, and let $(\hat{\eta}_t)_{t\geq 1}$ and $(\eta_t)_{t\geq 1}$ be as defined in \eqref{eq:seq_1}-\eqref{eq:seq_2} where, for all $x'\in\R$, $M(x',\dd x)$ denote the $\mathcal{N}(\rho x',\sigma^2)$ distribution truncated on $\setX$ and where $\eta_1$ is such that $\eta_1(\setX)=1$. In this context, by repeating the   calculations in Section \ref{sub:proof_propPF}, it is trivial to verify that if in Algorithm \ref{algo:PF}   the   $U_{t,2}^n$'s are independent $\mathcal{U}(0,1)$ random variables then
\begin{align*}
\P\big(\hat{\eta}_t^N\big( [-a,a] \big)=0,\,\,i.o.\big)=\P\big(\eta_t^N\big [-a,a]\big)=0,\,\,i.o.\big)=1,\quad\forall N\geq 1,\quad\forall a\in (0,l).
\end{align*}

Interestingly, in the literature it has been realized that if, in Algorithm \ref{algo:PF}, the function $G$ is defined by $G(x)=\ind_{\R\setminus I}(x)$ for some  compact interval $I\subset \R$ then  it may happen that, at some time $t\geq 1$,   all the particles $\{X_t^n\}_{n=1}^N$ fall  into $I$. In this case, the particle system is said to be extinct, and some estimates for  the  probability that this event happens   before a given time instant $T\geq 2$ are provided in \citet[Section 7.4.1, page 231]{DelMoral:book}.  Surprisingly, it has not been noticed that the problem whereby all the particles  $\{X_t^n\}_{n=1}^N$ are located in an arbitrary given compact interval $I\subset\R$   (i) arises independently of the definition of the function $G$,  and (ii)  implies that PFs cannot be used to reliably estimate filtering distributions over an infinite time horizon.

 \subsection{Finite time horizon and probabilistic behaviour of particle filters}
 
By Proposition \ref{prop:PF}, as time progresses a PF will inevitably reach a time instant $\tau$ at which it will fail  at estimating well the filtering and/or the predictive distribution  of the SSM of interest. From a practical point of view it is of key   interest to study the probability that such an undesirable event happens in a finite number $T$ of time steps.

The following theorem addresses this question:
\begin{theorem}\label{thm:PF}
Let $N\geq 1$ and consider Algorithm \ref{algo:PF} where    $\big(\{U_{t}^n\}_{n=1}^N\big)_{t\geq 1}$ is a sequence of independent sets of $N$  independent $\mathcal{U}(0,1)^2$ random variables. Then, there exists a constant $\bar{C}_1\in (0,\infty)$ (independent of $N$) such that, for all $q\in(0,1)$ and all $T\geq 1$, with probability at least $1-q$ we have  
\begin{align*}
\max\bigg\{\sup_{t\in\{1,\dots,T\}} \|\hat{\eta}_t^N-\hat{\eta}_t\|,\,
\sup_{t\in\{1,\dots,T\}} \|\eta_t^N-\eta_t\|\bigg\}\leq    \bar{C}_1        \delta_{N,T,q}^{ 1/2}\log\big(1+\delta_{N,T,q}^{-1/2}\big) 
\end{align*}
with    $\delta_{N,T,q}=N^{-1 }\big(1+\log(T/q)\big)$.
\end{theorem}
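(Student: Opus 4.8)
The plan is to control the time-uniform error by combining a geometric contraction of the filtering flow with a high-probability bound on the per-step Monte Carlo fluctuations. Write the one-step filter map $\Psi:\mathcal{P}(\R)\to\mathcal{P}(\R)$, $\Psi(\mu)(\dd x)=G(x)\,(\mu M)(\dd x)/(\mu M)(G)$, so that the exact recursions of \eqref{eq:seq_1}--\eqref{eq:seq_2} read $\eta_t=\hat{\eta}_{t-1}M$ and $\hat{\eta}_t=\Psi(\hat{\eta}_{t-1})$. Denote by $\check{\eta}_{t-1}^N=N^{-1}\sum_{n=1}^N\delta_{\hat{X}_{t-1}^n}$ the resampled empirical measure, so that in Algorithm~\ref{algo:PF} the particle recursion is $\eta_t^N\approx\check{\eta}_{t-1}^N M$ up to a mutation fluctuation, and $\hat{\eta}_t^N$ is the $G$-reweighting of $\eta_t^N$. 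First I would isolate the two sources of randomness injected at step $t$, namely the resampling error $e_t^{\mathrm{r}}:=\|\check{\eta}_{t-1}^N-\hat{\eta}_{t-1}^N\|$ and the mutation error $e_t^{\mathrm{m}}:=\|\eta_t^N-\check{\eta}_{t-1}^N M\|$. Using that, for a half-line $(-\infty,a]$, the map $z\mapsto M(z,(-\infty,a])=\Phi((a-\rho z)/\sigma)$ is monotone in $z$ with total variation one, one checks the non-expansiveness $\|\mu M-\nu M\|\le\|\mu-\nu\|$ for the Kolmogorov distance, which lets the resampling error be transported through $M$ without growth.

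Given this decomposition, the core is the deterministic recursive inequality $\|\hat{\eta}_t^N-\hat{\eta}_t\|\le\lambda\,\|\hat{\eta}_{t-1}^N-\hat{\eta}_{t-1}\|+\Lambda_t\,(e_t^{\mathrm{r}}+e_t^{\mathrm{m}})$, where $\lambda\in(0,1)$ is the contraction modulus of $\Psi$ and $\Lambda_t$ is the local amplification incurred by the $G$-reweighting. Establishing the strict contraction $\lambda<1$ is exactly the role I expect Lemma~\ref{lemma:key1} and Proposition~\ref{prop:stability_model} to play: the Gaussian smoothing by $M$ together with the reweighting by $G$ forgets the past geometrically, at the rate $\epsilon_\star$ of Proposition~\ref{prop:stability_model}. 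Unrolling the recursion and summing the geometric series then gives $\sup_{t\le T}\|\hat{\eta}_t^N-\hat{\eta}_t\|\le(1-\lambda)^{-1}\max_{t\le T}\Lambda_t(e_t^{\mathrm{r}}+e_t^{\mathrm{m}})$, so the time-uniform error is governed by the worst per-step fluctuation rather than by an accumulation over $T$; the companion bound for $\eta_t^N-\eta_t$ then follows from the non-expansiveness of $M$ at the cost of one additional local error.

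It remains to bound the local errors with high probability. Conditionally on $\F_{t-1}^N$ the resampled points $\{\hat{X}_{t-1}^n\}_{n=1}^N$ are i.i.d.\ draws from $\hat{\eta}_{t-1}^N$ and the mutated points $\{X_t^n\}_{n=1}^N$ are independent with $X_t^n\sim M(\hat{X}_{t-1}^n,\cdot)$, so $e_t^{\mathrm{r}}$ and $e_t^{\mathrm{m}}$ are suprema over half-lines of centred empirical processes built from (possibly non-identically distributed) independent Bernoulli variables. Since the class of half-lines has Vapnik--Chervonenkis dimension one, a Dvoretzky--Kiefer--Wolfowitz/Massart-type bound gives the conditional sub-Gaussian tail $\P(e_t^{\mathrm{r}}\ge\varepsilon\mid\F_{t-1}^N)\vee\P(e_t^{\mathrm{m}}\ge\varepsilon\mid\F_{t-1}^N)\le C_0\,e^{-2N\varepsilon^2}$. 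A union bound over $t\in\{1,\dots,T\}$ and over the two steps then yields that, with probability at least $1-q$, every local error is at most a constant multiple of $\delta_{N,T,q}^{1/2}$ with $\delta_{N,T,q}=N^{-1}(1+\log(T/q))$, which supplies the $\log(T/q)$ dependence.

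The main obstacle is the interaction of the contraction step with the unbounded potential $G(x)=e^{-cx^2/2}$: classical Dobrushin/Hilbert-metric stability theory does not apply because $G$ has infinite oscillation, which is precisely why the hypotheses of \citet{del2013mean} fail here. The amplification $\Lambda_t$ is governed by the inverse normalisation $1/\eta_t^N(G)$ and by the oscillation of $\log G$ over the region actually occupied by the particle cloud. On the good event above one controls $\eta_t^N(G)$ from below by $\eta_t(G)$, itself bounded away from $0$ by Proposition~\ref{prop:stability_model}, but confining the analysis to the effective support forces a truncation at level $a_N\asymp\sqrt{\log N}$, over which $\log G$ oscillates by $O(a_N^2)=O(\log N)$. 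Feeding $\varepsilon\asymp\delta_{N,T,q}^{1/2}$ into this estimate and using $\log N\asymp\log(1+\delta_{N,T,q}^{-1/2})$ produces exactly the extra factor $\log\big(1+\delta_{N,T,q}^{-1/2}\big)$ in the statement, with all constants absorbed into $\bar{C}_1$. Making this truncation rigorous and uniform in $t$ --- in particular propagating the lower bound on $\eta_t^N(G)$ and the contraction through the induction on the good event --- is where I expect the bulk of the technical work (and Lemma~\ref{lemma:key1}) to lie.
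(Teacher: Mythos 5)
Your probabilistic half is essentially the paper's: conditionally on $\F_{t-1}^N$ the particles $\{X_t^n\}_{n=1}^N$ are i.i.d.\ draws from $\hat{\eta}_{t-1}^NM$ (the paper does not even need your split into resampling and mutation errors --- it applies the Dvoretzky--Kiefer--Wolfowitz/Massart bound once per step to $\|\eta_t^N-\hat{\eta}_{t-1}^NM\|$), and a product/union bound over $t\in\{1,\dots,T\}$ with Bernoulli's inequality gives $\sup_{t\leq T}\|\eta_t^N-\hat{\eta}_{t-1}^NM\|\lesssim \delta_{N,T,q}^{1/2}$ with probability $1-q$. The genuine gap is in your deterministic backbone. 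You posit a uniform one-step strict contraction $\|\hat{\eta}_t^N-\hat{\eta}_t\|\leq\lambda\|\hat{\eta}_{t-1}^N-\hat{\eta}_{t-1}\|+\Lambda_t(e_t^{\mathrm{r}}+e_t^{\mathrm{m}})$ with $\lambda<1$, and you assign this role to Lemma~\ref{lemma:key1} and Proposition~\ref{prop:stability_model}. Neither result says this, and the claim is false in general for this model: the paper makes no ergodicity assumption on the signal, and when $|\rho|\geq 1+c\sigma^2$ the one-step map expands even on the Gaussian family --- in the notation of Lemma~\ref{lemma:Q}, the one-step coefficient is $|\rho|/(1+(c+c_k)\sigma^2)$, which is $\geq 1$ until $c_k$ has grown past $(|\rho|-(1+c\sigma^2))/\sigma^2$; forgetting is a multi-step property of the flow, not a one-step contraction. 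The paper's substitute is the telescoping (Del Moral) decomposition $\hat{\mu}_t-\hat{\eta}_t=\sum_{p=1}^t\big(\phi^{t-p}(\hat{\mu}_p)-\phi^{t-p}(\phi(\hat{\mu}_{p-1}))\big)$, with each term bounded in two complementary ways, by $C\Delta_T/\iota$ uniformly and by $C|\rho_{t-p}|/\iota$ geometrically; splitting the sum at a lag $k$ and optimizing $k\asymp\log(1/\Delta_T)$ is what produces the time-uniform bound --- and is precisely the source of the factor $\log(1+\Delta_T^{-1})$.

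Relatedly, your proposed origin of the logarithm (a truncation at $a_N\asymp\sqrt{\log N}$ and the oscillation of $\log G$ there) is not the actual mechanism and would not deliver the stated bound: in the paper the reweighting amplification is only $5/\mu(G)$ (Corollary~\ref{cor:IS} together with $\|G\|_\infty=1$, $V(G)=2$), with no $\log N$, and a $\log N$ factor is genuinely weaker than $\log\big(1+\delta_{N,T,q}^{-1/2}\big)$ in regimes where $\log(T/q)$ is comparable to $N$ (e.g.\ $\delta_{N,T,q}\asymp 1/\log N$, where your bound is vacuous while the theorem's is not). Finally, the lower bound on the random normalizing constants $\mu_t(G)$ cannot simply be read off Proposition~\ref{prop:stability_model} on a good event; it must be propagated by induction jointly with the error bound (this bootstrap is exactly what the proof of Lemma~\ref{lemma:key1} does, via the compact-interval mass $\iota_{T-1}$ of Lemma~\ref{lemma:key0}), a step you correctly flag but leave open.
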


The next result is a   direct consequence of  Theorem \ref{thm:PF}.
\begin{corollary}\label{cor:PF}
Consider the set-up of Theorem \ref{thm:PF}. Then,  there exists a constant $\bar{C} '_1\in(0,\infty)$ such that, for any $(q,\kappa)\in(0,1)^2$ and $T\geq 1$, we have
\begin{align*}
\P\Big(\sup_{t\in\{1,\dots,T\}} \|\hat{\eta}_t^N-\hat{\eta}_t\|\leq \kappa,\,\,\sup_{t\in\{1,\dots,T\}} \|\hat{\eta}_t^N-\hat{\eta}_t\|\leq \kappa\Big)\geq 1-q,\quad\forall N\geq N_{T,\kappa,q}
\end{align*}
with $N_{T,\kappa,q}=\bar{C}'_1\big\{1+ \log(1+\kappa^{-1})\big\}^2\kappa^{-2}\big(1+  \log(T/q)\big)$.
\end{corollary}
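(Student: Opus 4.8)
The plan is to deduce the corollary from Theorem~\ref{thm:PF} by a purely deterministic argument: I find the smallest $N$ for which the random upper bound supplied by that theorem already lies below $\kappa$. Writing $g(\delta)=\delta^{1/2}\log\big(1+\delta^{-1/2}\big)$ for $\delta\in(0,\infty)$, Theorem~\ref{thm:PF} states that the event $\big\{\max\{\sup_{t\in\{1,\dots,T\}}\|\hat{\eta}_t^N-\hat{\eta}_t\|,\,\sup_{t\in\{1,\dots,T\}}\|\eta_t^N-\eta_t\|\}\le \bar{C}_1\,g(\delta_{N,T,q})\big\}$ has probability at least $1-q$. Hence it suffices to produce a constant $\bar{C}'_1$ such that $N\ge N_{T,\kappa,q}$ forces $\bar{C}_1\,g(\delta_{N,T,q})\le\kappa$, since that inequality makes the event above a subset of the event controlled by the corollary, and the probability bound is inherited.

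First I would record the elementary monotonicity of $g$: the substitution $x=\delta^{-1/2}$ turns it into $g=\log(1+x)/x$, which is decreasing in $x>0$, so $g$ is increasing in $\delta$. Because $\delta_{N,T,q}=N^{-1}\big(1+\log(T/q)\big)$ is decreasing in $N$, it is enough to ensure $\delta_{N,T,q}\le\delta_0$ for a threshold $\delta_0=\delta_0(\kappa)$ chosen so that $\bar{C}_1\,g(\delta_0)\le\kappa$. I would take
\[
\delta_0(\kappa)=\frac{\kappa^2}{\bar{C}'_1\,\{1+\log(1+\kappa^{-1})\}^2},
\]
so that $\delta_0^{1/2}=\kappa\big/\big(\sqrt{\bar{C}'_1}\,\{1+\log(1+\kappa^{-1})\}\big)$; then $\delta_{N,T,q}\le\delta_0$ is exactly equivalent to $N\ge\big(1+\log(T/q)\big)/\delta_0=\bar{C}'_1\{1+\log(1+\kappa^{-1})\}^2\kappa^{-2}\big(1+\log(T/q)\big)=N_{T,\kappa,q}$, which is the stated threshold.

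It then remains to verify $\bar{C}_1\,g(\delta_0)\le\kappa$ for all $\kappa\in(0,1)$, and this is what fixes $\bar{C}'_1$. The only estimate requiring care is the logarithmic factor $\log(1+\delta_0^{-1/2})$ with $\delta_0^{-1/2}=\sqrt{\bar{C}'_1}\,\{1+\log(1+\kappa^{-1})\}\,\kappa^{-1}$. Choosing $\bar{C}'_1\ge1$ gives $\delta_0^{-1/2}\ge1$, whence $\log(1+\delta_0^{-1/2})\le\log2+\tfrac12\log\bar{C}'_1+\log\{1+\log(1+\kappa^{-1})\}+\log(\kappa^{-1})$; using $\log(\kappa^{-1})\le1+\log(1+\kappa^{-1})$ and $\log\{1+\log(1+\kappa^{-1})\}\le\log(1+\kappa^{-1})\le1+\log(1+\kappa^{-1})$, this yields $\log(1+\delta_0^{-1/2})\le C''\{1+\log(1+\kappa^{-1})\}$ with $C''=\log2+\tfrac12\log\bar{C}'_1+2$. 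Multiplying by $\bar{C}_1\,\delta_0^{1/2}\le\bar{C}_1\,\kappa\big/\big(\sqrt{\bar{C}'_1}\{1+\log(1+\kappa^{-1})\}\big)$ gives $\bar{C}_1\,g(\delta_0)\le(\bar{C}_1 C''/\sqrt{\bar{C}'_1})\,\kappa$. Since $C''$ grows only like $\log\bar{C}'_1$ whereas $\sqrt{\bar{C}'_1}$ grows polynomially, there is no circularity: one can pick $\bar{C}'_1$ large enough that $\bar{C}_1 C''\le\sqrt{\bar{C}'_1}$, giving $\bar{C}_1\,g(\delta_0)\le\kappa$ as required. This uniform control of the logarithmic factor—where the squared bracket $\{1+\log(1+\kappa^{-1})\}^2$ and the additive ones in $N_{T,\kappa,q}$ originate—is the only genuine (and still elementary) obstacle; everything else is monotonicity of $g$ and algebra.
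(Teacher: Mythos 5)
Your proof is correct and follows essentially the same route as the paper's: both deduce the corollary from Theorem \ref{thm:PF} by exploiting monotonicity of $\delta\mapsto\delta^{1/2}\log\big(1+\delta^{-1/2}\big)$, plugging in a threshold $\delta_0\asymp\kappa^2\big\{1+\log(1+\kappa^{-1})\big\}^{-2}$, and absorbing the logarithmic factor by taking the constant large, using that $\log\bar{C}'_1$ grows slower than $\sqrt{\bar{C}'_1}$. The paper packages this last step in a soft limiting argument (the auxiliary function $f_D$ with $\sup_{x\geq 1}f_D(x)\rightarrow 0$ as $D\rightarrow\infty$), whereas your explicit chain of elementary inequalities achieves the same calibration more concretely; the two arguments are the same in substance.
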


The lower bound on $N$ provided in Corollary \ref{cor:PF}  to ensure a given (probabilistic) estimation error depends on $(T,q)$ only through the quantity $\log(T/q)$. As shown in the next proposition, this dependence in the parameters $T$ and $q$ is sharp:
\begin{proposition}\label{prop:lower}
Consider the set-up of Theorem \ref{thm:PF} and let $\bar{q}\in (0,1/2)$. Then, for all $\kappa\in(0,1]$, there exist  constants  $C_\kappa\in(0,\infty)$ and $T_\kappa\in\mathbb{N}$  such that, for all $q\in(0,\bar{q}]$ and all $T\geq T_\kappa$, we have
\begin{align*}
\P\Big(\sup_{t\in\{1,\dots,T\}}\|\hat{\eta}_t^{N}-\hat{\eta}_t\|_{\mathrm{D}}\leq \kappa,\,\,\sup_{t\in\{1,\dots,T\}}\|\eta_t^{N}-\eta_t\|_{\mathrm{D}}\leq \kappa\Big)< 1-q,\quad\forall N<  C_\kappa\log(T/q).
\end{align*}
\end{proposition}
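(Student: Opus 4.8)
The plan is to quantify, over the finite horizon $\{1,\dots,T\}$, the very failure mechanism that drives Proposition \ref{prop:PF}, namely the event that after a mutation step all $N$ particles land outside a fixed symmetric interval. First I would invoke Proposition \ref{prop:stability_model} to fix, for the given $\kappa$, a radius $a_\kappa\in(0,\infty)$ and an index $T_\kappa^0\in\mathbb{N}$ with $\min\{\hat{\eta}_t([-a_\kappa,a_\kappa]),\,\eta_t([-a_\kappa,a_\kappa])\}\geq\kappa$ for all $t\geq T_\kappa^0$; this is possible since both sequences converge to centred Gaussians of positive variance, whose mass on $[-a,a]$ tends to one as $a\to\infty$. (As $\kappa\uparrow 1$ one has $a_\kappa\to\infty$ and the constant $C_\kappa$ constructed below tends to $0$, consistent with the endpoint $\kappa=1$ being vacuous; I focus on $\kappa\in(0,1)$.)

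Next I would introduce the events $A_t=\{\eta_t^N([-a_\kappa,a_\kappa])=0\}$ together with the filtration $\F_t^N$ of the proof of Proposition \ref{prop:PF}. The computation leading to \eqref{eq:is_PF} gives $\P(A_t\mid\F_{t-1}^N)\geq\varrho^N$ with $\varrho=1-\Phi(4a_\kappa/\sigma)\in(0,1/2)$, hence $\P(A_t^c\mid\F_{t-1}^N)\leq 1-\varrho^N$ for every $t\geq 1$. Since $A_t^c\in\F_t^N$, iterating the tower property over $t=T_\kappa^0,\dots,T$ yields
\[
\P\Big(\bigcap_{t=T_\kappa^0}^{T}A_t^c\Big)\leq(1-\varrho^N)^{\,T-T_\kappa^0+1}.
\]
On the complementary event some $A_t$ with $t\in\{T_\kappa^0,\dots,T\}$ occurs; there $\eta_t^N([-a_\kappa,a_\kappa])=0$ forces $\hat{\eta}_t^N([-a_\kappa,a_\kappa])=0$ as well, so both $\|\eta_t^N-\eta_t\|_{\mathrm{D}}\geq\eta_t([-a_\kappa,a_\kappa])\geq\kappa$ and $\|\hat{\eta}_t^N-\hat{\eta}_t\|_{\mathrm{D}}\geq\kappa$. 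Thus the ``good'' event of the statement is contained in $\bigcap_{t=T_\kappa^0}^{T}A_t^c$, and its probability is at most $(1-\varrho^N)^{T-T_\kappa^0+1}$.

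It then remains to verify that this bound is strictly below $1-q$ whenever $N<C_\kappa\log(T/q)$. Writing $m=T-T_\kappa^0+1$ and using $(1-\varrho^N)^m\leq e^{-m\varrho^N}$, it suffices to have $m\varrho^N>\log\{1/(1-q)\}$, equivalently $N<\{\log m-\log\log(1/(1-q))\}/\log(1/\varrho)$. Since $q\leq\bar{q}<1/2$ gives $\log\{1/(1-q)\}\leq 2q$, and since $m\geq T/2$ once $T\geq 2T_\kappa^0$, the numerator is at least $\log(T/q)-2\log 2$, which exceeds $\tfrac12\log(T/q)$ as soon as $T/q\geq 16$; this is guaranteed by setting $T_\kappa=\max\{2T_\kappa^0,8\}$ and using $q<1/2$. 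Choosing $C_\kappa=\{2\log(1/\varrho)\}^{-1}$ then closes the argument. I expect the only real obstacle to be this last bookkeeping step: converting the clean exponential bound $(1-\varrho^N)^m<1-q$ into a threshold of the exact advertised form $C_\kappa\log(T/q)$, uniformly over $q\leq\bar{q}$ and $T\geq T_\kappa$, while absorbing the $\log\log(1/(1-q))$ term and the offset $T_\kappa^0$ into the constants. The probabilistic core --- the per-step bound $\varrho^N$ and its iteration --- is a direct quantitative strengthening of Proposition \ref{prop:PF} and should present no difficulty.
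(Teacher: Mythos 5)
Your proposal is correct and follows essentially the same route as the paper's proof: Proposition \ref{prop:stability_model} fixes $a_\kappa$ and a burn-in time, the bound \eqref{eq:is_PF} gives the per-step extinction probability $\varrho^N$ for the interval $[-a_\kappa,a_\kappa]$, conditioning/tower-property iteration yields the $(1-\varrho^N)^{T-T_\kappa^0+1}$ bound on the good event, and elementary estimates convert this into a threshold of the form $C_\kappa\log(T/q)$ (the paper uses Bernoulli's inequality on $(1-q)^{1/v_T}$ where you use $(1-x)^m\leq e^{-mx}$ together with $-\log(1-q)\leq 2q$, a purely cosmetic difference). One small slip to fix: choose $a_\kappa$ so that $\eta_t([-a_\kappa,a_\kappa])>\kappa$ and $\hat{\eta}_t([-a_\kappa,a_\kappa])>\kappa$ \emph{strictly} (as the paper does), since with your non-strict ``$\geq\kappa$'' the event $A_t$ only forces $\|\eta_t^N-\eta_t\|_{\mathrm{D}}\geq\kappa$, which does not contradict the good event $\|\eta_t^N-\eta_t\|_{\mathrm{D}}\leq\kappa$ in the boundary case of equality, so the claimed containment would have a loophole.
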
 
\begin{remark}\label{rem:prop:lower}
Using \eqref{eq:KD}, it follows   that if we replace the discrepancy metric $\|\cdot\|_{\mathrm{D}}$ by the Kolmogorov metric $\|\cdot\|$ then the conclusion of Proposition \ref{prop:PF} holds only for all $\kappa\in (0,1/2)$.
\end{remark}

\subsubsection{Proof of  Proposition \ref{prop:lower}\label{sub:prop:lower}}
\begin{proof}[Proof of Proposition \ref{prop:lower}]
Let $\kappa\in(0,1)$  be arbitrary and remark that, by  Proposition \ref{prop:stability_model} and \eqref{eq:KD},  there exist  constants    $a_\kappa\in(0,\infty)$  and $T_\kappa\in\mathbb{N}$   such that, for all $t\geq T_\kappa$, we have both $\eta_t([-a_\kappa,a_\kappa])> \kappa   $ and $\hat{\eta}_t([-a_\kappa,a_\kappa])>\kappa$. In addition, let $\varrho_\kappa=\min\big\{1-\Phi(4a_\kappa/\sigma),1/2\big\}$ and, for all $N\geq 1$, let $(\F^N_t)_{t\geq 0}$ be as defined in the proof of Proposition \ref{prop:PF}. Then, by using \eqref{eq:is_PF},  we have
\begin{align}\label{eq:Ex_N}
\P\Big(\hat{\eta}_t^N([-a_\kappa,a_\kappa])=0,\,\, \eta_t^N([-a_\kappa,a_\kappa])=0|\F^N_{t-1}\Big)\geq  \varrho^N_\kappa,\quad\forall t\geq 1,\quad\forall N\geq 1.
\end{align}
Therefore, noting that for all $t\geq T_\kappa$  and $N\geq 1$ we have
\begin{align*}
1-\P\Big(\|\hat{\eta}_t^{N}-\hat{\eta}_t\|_{\mathrm{D}}\leq \kappa,&\,\,\|\eta_t^{N}-\eta_t\|_{\mathrm{D}}\leq \kappa|\F_{t-1}^N\Big)\geq \P\Big(\hat{\eta}_t^N([-a_\kappa,a_\kappa])=0,\,\,\eta_t^N([-a_\kappa,a_\kappa])=0|\F_{t-1}^N\Big) 
\end{align*}
it follows from \eqref{eq:Ex_N} that, for all $T\geq T_\kappa$ and $N\geq 1$,
\begin{align}\label{eq:Ex_N2}
\P\Big(\sup_{t\in\{1,\dots,T\}}\|\hat{\eta}_t^{N}-\hat{\eta}_t\|_{\mathrm{D}}\leq \kappa,\,\,\sup_{t\in\{1,\dots,T\}}\|\eta_t^{N}-\eta_t\|_{\mathrm{D}}\leq \kappa\Big)\leq (1-\varrho^N_\kappa)^{T-T_\kappa+1}.
\end{align}
To conclude the proof of the proposition we let $T\geq T_\kappa$, $\bar{q}\in (0,1/2)$ and $q\in(0,\bar{q}]$ be fixed, and to simplify the notation we let $v_T=T-T_\kappa+1$. Then, we first note that
\begin{align}\label{eq:Ex_N3}
(1-\varrho^N_\kappa)^{v_T}< 1-q,\quad\forall N< N_{T,\kappa,q}:=\frac{-\log\big( 1-(1-q)^{1/v_T} \big)}{\log(1/\varrho_\kappa)} 
\end{align}  
where, using Bernoulli's inequality \citep[see e.g.][Theorem 3.1]{li2013some}, 
\begin{align*}
(1-q)^{1/v_T}=\frac{(1-q)^{1+1/v_T}}{1-q}\geq \frac{1-q-q/v_T}{1-q}.
\end{align*}
The latter inequality implies that  $1-(1-q)^{1/v_T}\leq     (q/v_T)/(1-q)\leq  (q/v_T)/(1-\bar{q})$,  and thus
 \begin{align}\label{eq:Ex_N4}
 N_{T,\kappa,q}\geq \frac{\log(v_T/q)+\log(1-\bar{q})}{\log(1/\varrho_\kappa)}\geq \frac{\log(v_T/q)}{\log(1/\varrho_\kappa)}\delta_{\bar{q}},\quad\delta_{\bar{q}}:=1+\frac{\log(1-\bar{q})}{\log(1/\bar{q})}
 \end{align}
where $\delta_{\bar{q}}>0$  since $\bar{q}\in(0,1/2)$. The result of the proposition  follows from \eqref{eq:Ex_N2}-\eqref{eq:Ex_N4}, upon noting that if $T_\kappa>1$ then
 \begin{align*}
 \frac{\log(v_T/q)}{\log(T/q)}\geq \frac{\log(v_T)}{\log(T)}\geq \inf_{x\in[T_\kappa,\infty)}\frac{\log( x-T_\kappa+1)}{\log(x)}>0.
 \end{align*}
 where the first inequality uses the fact that $T\geq v_T$.
 
\end{proof}

 \subsubsection{Some remarks}

Corollary \ref{cor:PF}   constitutes a  reassuring result  concerning the time evolution of PF estimates. Indeed, it  notably shows that, as the time horizon $T$ that we consider increases,  the number of particles $N$ only needs to    grow   at speed $\log(T)$ to ensure that   the  PF algorithm    estimates  the whole sets of distributions $\{\hat{\eta}_t\}_{t=1}^T$ and $\{\eta_t\}_{t=1}^T$ with a constant (probabilistic) error.

It is important to stress that the result of  Proposition \ref{prop:lower} is not specific to the considered filtering problem but will hold in general. Indeed, by inspecting the calculations of Section \ref{sub:prop:lower}, it should be clear that the conclusion of Proposition \ref{prop:lower} holds as soon as the result of Proposition \ref{prop:stability_model} and   \eqref{eq:PF_instable}  hold. The former result is standard  while,  as we argued in Section \ref{sub:Disc_PF},  the latter result is expected to hold for  a large class of filtering problems.

\subsection{Almost sure behaviour of  sequential quasi-Monte Carlo}

We now turn to the study of SQMC. In the context of this work we only need  point sets in $(0,1)^2$ and thus, following the  discussion in Section \ref{sub:digital}, we limit our attention to scrambled $(0,2)$-sequences in base $b=2$. More precisely, in this subsection we study  Algorithm \ref{algo:PF} in the case where $\{U_t^n\}_{n=1}^N$ is the first $N$ points of a scrambled $(0,2)$-sequence in base $b=2$ for all $t\geq 1$, in which case we have, by \citet[][Theorem 4.7, page 54 and Theorem 4.14, page 59]{Niederreiter1992},
\begin{align}\label{eq:D_bounds}
\sup_{t\geq 1}D^*_2\big(\{U_t^n\}_{n=1}^N\big)\leq  \delta_N,\quad\forall N\geq 1\quad\P-a.s.
\end{align}
with
\begin{align}\label{eq:bound_02}
\delta_N=
\begin{cases}
\frac{\log(N)+3}{2N}, &\text{ if }N\in\{2^k,\,k\in\mathbb{N}\}\\
\frac{ \log (N)^2+11\log(2)\log(N)+18\log (2)^2}{N 8\log (2)^2}, &\text{otherwise} 
\end{cases},\quad\forall N\geq 1.
\end{align}

For the   resulting   filtering algorithm we have the following almost sure and time uniform guarantee:

\begin{theorem}\label{thm:main}
Let $N\geq 1$ and consider Algorithm \ref{algo:PF} where $\{U_t^n\}_{n=1}^N$ is the first $N$ points of a scrambled $(0,2)$-sequence in base $b=2$ for all $t\geq 1$. Then, there exists a constant $\bar{C}_2\in (0,\infty)$ (independent of $N$) such that, with $\delta_N$ as defined in \eqref{eq:bound_02},
\begin{align*}
\max\bigg\{\sup_{t\geq 1} \|\hat{\eta}_t^N-\hat{\eta}_t\|,\,\,\sup_{t\geq 1} \|\eta_t^N-\eta_t\|\bigg\}\leq   \bar{C}_2\,\delta_N^{1/2 }\log\big(1+\delta_N^{-1/2}\big),\quad\P-a.s.
\end{align*}
\end{theorem}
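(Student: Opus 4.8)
The plan is to isolate a per-step ``sampling error'' that is controlled by the star discrepancy alone, and then to propagate it through the (geometrically stable) filter dynamics so that the errors do not accumulate, yielding a bound that is uniform in $t$.

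\textbf{Step 1 --- local QMC error.} First I would note that the prediction step of Algorithm \ref{algo:PF} pushes $\mathcal{U}([0,1)^2)$ forward \emph{exactly} onto $\hat\eta_{t-1}^N M$: the composed map $(u_1,u_2)\mapsto F^{-1}_{M(F^{-1}_{\hat\eta_{t-1}^N}(u_1),\dd x)}(u_2)$ has law $\hat\eta_{t-1}^N M$ when $(u_1,u_2)\sim\mathcal{U}([0,1)^2)$. Hence for every threshold $a$ we have $\{X_t^n\le a\}=\{U_t^n\in A_{a,t}\}$ with $A_{a,t}=\{(u_1,u_2): u_2\le \Phi((a-\rho F^{-1}_{\hat\eta_{t-1}^N}(u_1))/\sigma)\}$ and $\lambda_2(A_{a,t})=\hat\eta_{t-1}^N M((-\infty,a])$. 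Since $u_1\mapsto F^{-1}_{\hat\eta_{t-1}^N}(u_1)$ is nondecreasing and $\Phi$ is monotone, the boundary of $A_{a,t}$ is a monotone curve, so $\|\eta_t^N-\hat\eta_{t-1}^N M\|$ is precisely the discrepancy of $\{U_t^n\}_{n=1}^N$ with respect to regions lying below monotone curves. This is exactly where Lemma \ref{lemma:key1} should enter, bounding that monotone-region discrepancy by $C\,(D^*_2)^{1/2}\log(1+(D^*_2)^{-1/2})$; combined with the almost-sure bound \eqref{eq:D_bounds} it gives $\sup_{t\ge2}\|\eta_t^N-\hat\eta_{t-1}^N M\|\le \bar C\,\delta_N^{1/2}\log(1+\delta_N^{-1/2})$, $\P$-a.s. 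The $t=1$ term is handled separately via the one-dimensional projection, whose preimage is an interval and is therefore controlled by $\delta_N$ directly.

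\textbf{Step 2 --- error recursion and contraction.} Writing $R:\mu\mapsto \Psi_G(\mu)M$ with $\Psi_G(\mu)=G\mu/\mu(G)$, the true predictive sequence satisfies $\eta_t=R(\eta_{t-1})$, while the algorithm produces $\eta_t^N=R(\eta_{t-1}^N)+\varepsilon_t$, where $\varepsilon_t=\eta_t^N-\hat\eta_{t-1}^N M$ is the local error from Step 1. The engine of the proof is a robust stability estimate --- the two-measure counterpart of Proposition \ref{prop:stability_model} --- namely that there exist $C_\dagger<\infty$, $\epsilon_\star\in(0,1)$ and an integer $k$ with $\|R^k(\mu)-R^k(\nu)\|\le C_\dagger\,\epsilon_\star^{k}\|\mu-\nu\|$ over the relevant measures. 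Telescoping $\eta_t^N-\eta_t$ as a sum of the local errors $\varepsilon_s$ propagated forward by $R^{t-s}$, and summing the resulting geometric series, then yields $\sup_{t\ge1}\|\eta_t^N-\eta_t\|\le \tfrac{C}{1-\epsilon_\star^{k}}\sup_s\|\varepsilon_s\|$, which preserves the order $\delta_N^{1/2}\log(1+\delta_N^{-1/2})$. The filtering error follows from $\hat\eta_t^N-\hat\eta_t=\Psi_G(\eta_t^N)-\Psi_G(\eta_t)$ together with the local Lipschitz continuity of $\Psi_G$ in Kolmogorov distance, which holds because $\eta_t(G)$ is bounded below uniformly in $t$ by Proposition \ref{prop:stability_model}, hence so is $\eta_t^N(G)$ once the error is small. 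Collecting constants into $\bar C_2$ gives the stated max.

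\textbf{Main obstacle.} The hard part is Step 2: establishing the geometric contraction of $R$ uniformly enough to run the telescoping. The reweighting $\Psi_G$ can amplify Kolmogorov errors whenever a measure places mass in the tails where $G=e^{-cx^2/2}$ is tiny and $\mu(G)$ is small, so contraction is not globally free; one must either exploit the explicit linear-Gaussian mixing of $Q$ to obtain a minorization-type bound, or restrict to a class of measures with controlled tails and check by induction that the particle approximations remain in it. This creates a potential circularity, since the Lipschitz control of $\Psi_G$ requires $\eta_t^N(G)$ bounded below, which requires the error to already be small. I would break it by observing that the asserted bound is informative only once $\delta_N$ is small --- for small $N$ one has $\delta_N^{1/2}\log(1+\delta_N^{-1/2})\ge1$ and the inequality is trivial because Kolmogorov distances never exceed $1$ --- and then running a bootstrap: for $N$ large the local errors are small enough that the induction keeps every $\eta_t^N$ in the good region, closing the estimate uniformly in $t$.
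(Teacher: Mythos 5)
Your architecture coincides with the paper's: a per-step discrepancy bound for the sampling step, followed by a time-uniform stability estimate that propagates these local errors, with a bootstrap argument to keep the particle measures in a region where the reweighting denominators are bounded below. One attribution error first: the monotone-region discrepancy bound you invoke in Step 1 is Lemma \ref{lemma:QMC_sample} in the paper (proved \`a la Hlawka, and giving the clean bound $\sqrt{96}\,D^*_2(\{u^n\}_{n=1}^N)^{1/2}+12\,D^*_2(\{u^n\}_{n=1}^N)$, with \emph{no} logarithmic factor), whereas Lemma \ref{lemma:key1} is the stability result your Step 2 needs; your observation that the relevant sets have monotone boundaries is exactly the hypothesis under which that lemma applies.

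The substantive deviation is in Step 2, and it is where your proposal has a gap. You assert a multiplicative geometric contraction $\|R^k(\mu)-R^k(\nu)\|\leq C_\dagger\,\epsilon_\star^k\,\|\mu-\nu\|$ and sum a geometric series. The paper never proves, and never needs, such an estimate: its Lemma \ref{lemma:key0} instead establishes two complementary bounds on each term of the telescoping decomposition \eqref{eq:decomp1} --- a forgetting bound of order $|\rho_{t-p}|\leq C_\star\epsilon_\star^{t-p}$ which is \emph{independent} of the local error (via Lemma \ref{lemma:Q} and the pointwise bound on $h_{a,r,s}$ in Lemma \ref{lemma:h_function}), and a one-step bound of order $\Delta_T$ with \emph{no} decay in $t-p$ (via Corollary \ref{cor:IS}) --- and then optimizes the index at which one switches between them. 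That optimization is precisely the origin of the factor $\log\big(1+\Delta_T^{-1}\big)$ in Lemma \ref{lemma:key1}, and hence of the logarithm in the theorem; you instead place the logarithm in Step 1, where the paper has none. Your route is not unworkable: on the class of measures with $\mu(I)\geq\iota$, a multiplicative contraction could plausibly be assembled from the same ingredients (Lemmas \ref{lemma:h_function}, \ref{lemma:KH_in} and \ref{lemma:Q}), and it would even remove the logarithm from the stability step. But as written it is the crux of the argument and is asserted rather than proved, and it is strictly stronger than anything the paper establishes --- the paper's interpolation shows you can avoid it entirely at the price the theorem statement already pays. Your resolution of the circularity (the bound is vacuous unless $\delta_N$ is small, then induct over $T$ to keep $\eta_t^N$ in the good region) is, on the other hand, exactly the mechanism of the paper's proof of Lemma \ref{lemma:key1}, see \eqref{eq:useDelta} and \eqref{eq:show_imp}.
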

 
\begin{remark}
Theorem \ref{thm:main} does not assume that the random point sets $\{U_t^n\}_{n=1}^N$'s are independent.  
\end{remark}

 Theorem \ref{thm:main} implies that the SQMC estimates $(\hat{\eta}_t^N)_{t\geq 1}$ and $(\eta_t^N)_{t\geq 1}$ of the two sequences $(\hat{\eta}_t)_{t\geq 1}$ and $(\eta_t)_{t\geq 1}$ are such that $\lim_{N\rightarrow\infty}\sup_{t\geq 1} \|\hat{\eta}_t^N-\hat{\eta}_t\|=\sup_{t\geq 1} \| \eta_t^N-\eta_t\|=0$ with $\P$-probability one. Moreover, it allows to readily obtain the following result:
\begin{corollary}\label{cor:SQMC}
Consider the set-up of Theorem \ref{thm:main}. Then, there exists a constant $\bar{C} '_2\in(0,\infty)$ such that, for any $(q,\kappa)\in(0,1)^2$ and $T\geq 1$, the following implication holds:
\begin{align*}
\frac{N}{\log(1+N)^2}\geq \bar{C}_2'\big\{1+& \log(1+\kappa^{-1})\big\}^2\kappa^{-2}\implies\P\Big(\sup_{t\geq 1} \|\hat{\eta}_t^N-\hat{\eta}_t\|\leq \kappa,\,\,\sup_{t\geq 1} \|\hat{\eta}_t^N-\hat{\eta}_t\|\leq \kappa\Big)=1.
\end{align*}
\end{corollary}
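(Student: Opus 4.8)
The plan is to derive the corollary from Theorem \ref{thm:main} by converting the almost-sure bound it provides into a purely deterministic condition on $N$. By Theorem \ref{thm:main} there is a universal constant $\bar{C}_2$ such that, with $\P$-probability one,
\[
\max\Big\{\sup_{t\geq 1}\|\hat{\eta}_t^N-\hat{\eta}_t\|,\ \sup_{t\geq 1}\|\eta_t^N-\eta_t\|\Big\}\leq \bar{C}_2\,\delta_N^{1/2}\log\big(1+\delta_N^{-1/2}\big).
\]
Hence it suffices to show that the hypothesis $N/\log(1+N)^2\geq \bar{C}_2'\{1+\log(1+\kappa^{-1})\}^2\kappa^{-2}$ forces the right-hand side to be at most $\kappa$: under that hypothesis the event in the corollary then contains an event of $\P$-probability one, and therefore has probability one. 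Note that $T$ and $q$ do not enter, precisely because the bound of Theorem \ref{thm:main} is uniform in $t$ and holds almost surely.

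First I would record a clean envelope for the discrepancy bound: inspecting \eqref{eq:bound_02} one checks that there is a universal constant $C_0\in(0,\infty)$ with $\delta_N\leq C_0\log(1+N)^2/N$ for every $N\geq 1$ (the power-of-two case is even smaller, and the finitely many small values of $N$ are handled directly). Next, observe that the right-hand side above equals $\bar{C}_2\,h(\delta_N^{1/2})$, where $h(s):=s\log(1+s^{-1})$; since $h'(s)=\log(1+s^{-1})-(s+1)^{-1}\geq 0$ by $\log(1+x)\geq x/(1+x)$, the map $h$ is nondecreasing on $(0,\infty)$. Writing $w:=\big(C_0\log(1+N)^2/N\big)^{1/2}$ and using $\delta_N^{1/2}\leq w$, monotonicity gives $\bar{C}_2\,h(\delta_N^{1/2})\leq \bar{C}_2\,h(w)$, so that the target reduces to the scalar inequality $h(w)\leq\kappa/\bar{C}_2$.

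The technical heart of the argument, and the step I expect to require the most care, is the scalar estimate relating smallness of $h(w)$ to smallness of $w$ with the correct logarithmic correction. Since $h$ increases from $0$ to a finite limit, $h(w)\leq\epsilon$ is equivalent to $w\leq h^{-1}(\epsilon)$, and what I need is an explicit lower bound on $h^{-1}(\epsilon)$. I would establish a lemma of the form: there is a universal $c\in(0,1]$ such that $w\leq c\,\epsilon/\{1+\log(1+\epsilon^{-1})\}$ implies $h(w)\leq\epsilon$. This is the step that produces the $\{1+\log(1+\kappa^{-1})\}$ factor, and it must be carried out attentively because both $w$ and $1+w^{-1}$ appear in the bound. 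Applying it with $\epsilon=\kappa/\bar{C}_2$ and absorbing the constant into the logarithm, via $\log(1+\bar{C}_2\kappa^{-1})\leq C\{1+\log(1+\kappa^{-1})\}$ for $\kappa\in(0,1)$, yields a sufficient condition of the form $w\leq c'\kappa/\{1+\log(1+\kappa^{-1})\}$.

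Finally I would unwind the substitution. Squaring the last display turns $w\leq c'\kappa/\{1+\log(1+\kappa^{-1})\}$ into $C_0\log(1+N)^2/N\leq (c')^2\kappa^2/\{1+\log(1+\kappa^{-1})\}^2$, that is
\[
\frac{N}{\log(1+N)^2}\geq \frac{C_0}{(c')^2}\,\frac{\{1+\log(1+\kappa^{-1})\}^2}{\kappa^2},
\]
which is exactly the stated hypothesis upon setting $\bar{C}_2':=C_0/(c')^2$. Consequently the assumed lower bound on $N/\log(1+N)^2$ forces $\bar{C}_2\,\delta_N^{1/2}\log(1+\delta_N^{-1/2})\leq\kappa$ as a deterministic inequality, and combined with the almost-sure bound of Theorem \ref{thm:main} this delivers the claimed probability-one statement. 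Everything in the proof is deterministic apart from the single invocation of Theorem \ref{thm:main}; the only genuinely delicate ingredient is the scalar inversion lemma of the preceding paragraph.
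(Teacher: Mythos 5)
Your proposal is correct and matches the paper's own route: the paper proves this corollary precisely by combining the almost-sure, time-uniform bound of Theorem \ref{thm:main} with the same scalar calculation used for Corollary \ref{cor:PF} (there packaged as the monotonicity of $x\mapsto x^{-1}\log(1+x)$ and the $f_D$ analysis, which is equivalent to your inversion lemma for $h(s)=s\log(1+s^{-1})$). Your explicit envelope $\delta_N\leq C_0\log(1+N)^2/N$ and the deterministic reduction are exactly the omitted details the paper alludes to.
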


\subsubsection{Some remarks}
 
To simplify the discussion we focus below on the estimation of the filtering distributions but the same comments hold for the estimation  of the predictive distributions $(\eta_t)_{t\geq 1}$.
 
Due to the presence of the $\log(N)$ term in the definition of $\delta_N$,  the dependence in $N$ of the bound given in   Theorem \ref{thm:main} for $\sup_{t\geq 1} \|\hat{\eta}_t^N-\hat{\eta}_t\|$ is slightly worse than that of the probabilistic bound obtained in Theorem \ref{thm:PF} for PFs. On the other hand,   in the context of this work, SQMC has the advantage    to be a   filtering algorithm having time uniform and almost sure guarantees. 

Unfortunately, we cannot compare the lower bound on $N$ given in Corollary \ref{cor:SQMC} with the one obtained  in Corollary \ref{cor:PF}  to ensure that,  with probability at least $1-q$,  the PF estimates $\{\hat{\eta}_t^N\}_{t=1}^T$ are such  that $\sup_{t\in\{1,\dots,T\}} \|\hat{\eta}_t^N-\hat{\eta}_t\|\leq \kappa$: there is indeed   no reason  for the  constants $\bar{C}_1$ and $\bar{C}_2$ appearing in these bounds  to be identical. Given the properties of scrambled $(t,s)$-sequences  reminded in Section \ref{sub:digital}, it is however reasonable to expect that, in the worst scenario, the constant $\bar{C}_2$ appearing in  Corollary \ref{cor:SQMC} can only be slightly larger than the constant $\bar{C}_1$ appearing in  Corollary \ref{cor:PF}.

 \section{Proof  of Theorems \ref{thm:PF} and \ref{thm:main} and  of Corollaries \ref{cor:PF} and \ref{cor:SQMC}\label{sec:proofs}}

 \subsection{Preliminary result: A general discrepancy bound for filtering algorithms}

Let $\Phi,\Psi:\mathcal{P}(\R)\rightarrow\mathcal{P}(\R)$ be defined 
\begin{align*}
\Psi(\mu)(\dd x)=\frac{G(x)\mu(\dd x)}{\mu(G)} ,\quad\Phi(\mu)=\Psi(\mu)M,\quad \mu\in\mathcal{P}(\R)
\end{align*} 
and note  that $\eta_{t+1}=\Phi(\eta_t)$ for all $t\geq 1$.

Using this notation,  the following result plays a central role in the proof of  Theorems \ref{thm:PF} and \ref{thm:main}:
\begin{lemma}\label{lemma:key1}
There exists   a constant  $C\in (0,\infty)$ such that, for any sequence $(\mu_t)_{t\geq 1}$   in $\mathcal{P}(\R)$ and letting $\hat{\mu}_t=\Psi(\mu_t)$ for all $t\geq 1$,  we have, for any $T\geq 1$,
\begin{align*}
 \sup_{t\in\{1,\dots,T\}} \|\hat{\mu}_t-\hat{\eta}_t\|\leq   C\, \Delta_{T} \log\big(1+\Delta_{T}^{-1}\big),\quad \sup_{t\in\{1,\dots,T\}} \|\mu_t-\eta_t\|\leq C\, \Delta_{T} \log\big(1+\Delta_{T}^{-1}\big)
\end{align*}
 where    $\Delta_{T}=\sup_{t\in\{1,\dots,T\}}\|\mu_t-\Phi(\mu_{t-1})\|$  with the convention    that $\Phi(\mu_{0})=\eta_1$.
\end{lemma}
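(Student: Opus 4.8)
The plan is to read Lemma~\ref{lemma:key1} as a purely deterministic error-propagation (stability) statement for the filtering maps $\Psi$ and $\Phi$: the randomness of the algorithm never enters, only the one-step discrepancies $\|\mu_t-\Phi(\mu_{t-1})\|\le\Delta_T$ do. Since $\hat\mu_t=\Psi(\mu_t)$ and $\hat\eta_t=\Psi(\eta_t)$, I would first reduce everything to the predictive errors $d_t:=\|\mu_t-\eta_t\|$ and treat the filtering errors at the very end. The backbone is the telescoping identity over the semigroup generated by $\Phi$: writing $\Phi^k$ for the $k$-fold composition and using $\eta_t=\Phi^{t-1}(\eta_1)$ together with the convention $\Phi(\mu_0)=\eta_1$,
\[
\mu_t-\eta_t=\sum_{s=1}^{t}\Big(\Phi^{\,t-s}(\mu_s)-\Phi^{\,t-s}\big(\Phi(\mu_{s-1})\big)\Big),
\]
so that each summand is $\Phi^{\,t-s}$ applied to the pair $(\mu_s,\Phi(\mu_{s-1}))$ whose Kolmogorov distance is at most $\Delta_T$. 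Consequently $d_t\le\sum_{k\ge0}\beta_k\,\Delta_T$ as soon as one has a forgetting estimate $\|\Phi^{k}(\alpha)-\Phi^{k}(\beta)\|\le\beta_k\|\alpha-\beta\|$ with $\sum_k\beta_k<\infty$, uniformly in $T$.

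Before attacking the forgetting estimate I would record two elementary facts. First, the Gaussian location kernel $M(z,\cdot)=\mathcal N(\rho z,\sigma^2)$ is non-expansive for the Kolmogorov metric: writing $F_{\pi M}(a)=\int\Phi\big((a-\rho z)/\sigma\big)\pi(\dd z)$ and integrating by parts against $F_\pi-F_{\pi'}$ shows $\|\pi M-\pi'M\|\le\|\pi-\pi'\|$, since $z\mapsto\Phi((a-\rho z)/\sigma)$ is monotone with total variation one. Second, the reweighting $\Psi$ is Lipschitz with a scale-dependent constant: the same integration-by-parts argument, using $\int_\R|G'|=2$, gives $\|\Psi(\mu)-\Psi(\nu)\|\le C_0\,\mu(G)^{-1}\|\mu-\nu\|$. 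Composing these two bounds only yields $\|\Phi(\mu)-\Phi(\nu)\|\le C_0\,\mu(G)^{-1}\|\mu-\nu\|$, which is \emph{not} a contraction --- so a finer estimate is unavoidable. The one structural gain I would exploit is that every measure in the range of $\Phi$ is Gaussian-smoothed, hence has density bounded by $(\sqrt{2\pi}\sigma)^{-1}$ and sub-Gaussian tails; combined with Proposition~\ref{prop:stability_model} this says that after even one application of $\Phi$ the iterates live near the stationary predictive law $\mathcal N(0,\rho^2\sigma^2_\infty+\sigma^2)$.

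The heart of the proof, and the step I expect to be the main obstacle, is the quantitative forgetting estimate for $\Phi^k$ in the Kolmogorov metric. Because the state space is $\R$ and the Gaussian kernel does not satisfy a uniform Doeblin/minorization condition, the contraction of $\Phi$ is genuinely non-uniform: its rate degrades as the measures spread out, which is exactly what the crude $\mu(G)^{-1}$ factor above reflects. My plan is to localize to a box $[-R,R]$, on which $G(z)\ge e^{-cR^2/2}$ is bounded below and the kernel mixes with a quantitative rate, while bounding the mass the (sub-Gaussian) iterates place outside $[-R,R]$ by a term of order $e^{-c'R^2}$; propagating a uniform second-moment/tail bound along the sequence $(\mu_t)$ (bootstrapped from $\Delta_T$ and the exponential forgetting of Proposition~\ref{prop:stability_model}) keeps this truncation error under control. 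Choosing the truncation radius $R\asymp\sqrt{\log(1/\Delta_T)}$ so as to balance the geometric contraction inside the box against the leaked tail mass is what converts the naive $\sum_k\beta_k\,\Delta_T$ into the claimed $C\,\Delta_T\log(1+\Delta_T^{-1})$; the logarithmic correction is precisely the price of the unbounded state space. I regard making this truncation argument sharp and uniform in $T$ as the main technical content of the lemma.

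Finally, I would deduce the filtering bound from the predictive one. Since $\hat\mu_t-\hat\eta_t=\Psi(\mu_t)-\Psi(\eta_t)$, the second elementary fact gives $\|\hat\mu_t-\hat\eta_t\|\le C_0\,\mu_t(G)^{-1}\,d_t$, and it remains to bound $\mu_t(G)$ from below uniformly in $t$. This is now immediate from the already-established predictive bound: Kolmogorov closeness gives $\mu_t([-R,R])\ge\eta_t([-R,R])-2d_t$, and since $\eta_t([-R,R])$ is bounded below for fixed $R$ (again by Proposition~\ref{prop:stability_model}), one gets $\mu_t(G)\ge e^{-cR^2/2}\mu_t([-R,R])\ge\text{const}>0$ whenever $d_t$ is small; when $d_t$ is not small the assertion of the lemma is trivial because the Kolmogorov distance never exceeds one. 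Combining this with the predictive estimate yields the filtering bound with the same rate, completing the plan.
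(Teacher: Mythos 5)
Your backbone --- the telescoping decomposition $\mu_t-\eta_t=\sum_{s=1}^{t}\big(\Phi^{t-s}(\mu_s)-\Phi^{t-s}(\Phi(\mu_{s-1}))\big)$, the two elementary facts (non-expansiveness of $M$ and the $\mu(G)^{-1}$-Lipschitz bound for $\Psi$), and the final bootstrap ``if $d_t$ is small then $\mu_t(G)$ is bounded below, otherwise the claim is trivial'' --- all match the paper's proof (its decomposition \eqref{eq:decomp1}, Corollary \ref{cor:IS}, and the induction used to prove Lemma \ref{lemma:key1} from Lemma \ref{lemma:key0}). But the step you yourself identify as the heart of the argument is missing, and the route you sketch for it would fail quantitatively. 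First, the estimate you posit, $\|\Phi^{k}(\alpha)-\Phi^{k}(\beta)\|\leq \beta_k\|\alpha-\beta\|$ with $\sum_k\beta_k<\infty$ uniformly over probability measures, is not available: the constants in any such bound involve $\alpha(G)^{-1}$ and $\beta(G)^{-1}$ (equivalently, how much mass the arguments put near the origin), which are unbounded over $\mathcal{P}(\R)$. Second, the localization-plus-Doeblin mechanism cannot rescue it with the claimed rate: the minorization constant of the filter map restricted to a box $[-R,R]$ is of order $e^{-aR^2}$, so the per-step contraction coefficient is $1-e^{-aR^2}$ and the number of steps needed to forget is of order $e^{aR^2}$. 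With your choice $R\asymp\sqrt{\log(1/\Delta_T)}$ this is \emph{polynomial} in $1/\Delta_T$, so the scheme yields at best a bound of the form $\Delta_T^{1-\alpha}$ for some model-dependent $\alpha>0$, not $\Delta_T\log(1+\Delta_T^{-1})$; there is no choice of $R$ that balances the exponents in your favour, since both the minorization rate and the tail rate are fixed by the model. A further flaw: you propose to ``propagate a uniform second-moment/tail bound along $(\mu_t)$'', but Kolmogorov-closeness gives no control whatsoever on moments (a perturbation of size $\Delta_T$ in $\|\cdot\|$ can place mass $\Delta_T$ arbitrarily far out); only statements of the form $\mu_t([-R,R]^c)\leq\eta_t([-R,R]^c)+2\|\mu_t-\eta_t\|_{\mathrm{D}}$ survive, which suffices for mass lower bounds but not for sub-Gaussian tails.

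The paper circumvents all of this by never proving a measure-uniform contraction of $\Phi$. Instead, Lemma \ref{lemma:Q} computes the iterated unnormalized kernel in closed form, $Q^{k}(x',\dd x)\propto G_{c_k}(x')M_{\rho_k,\sigma_k^2}(x',\dd x)$ with $|\rho_k|\leq C_\star\epsilon_\star^{k}$, so that $\phi^{t-p}(\hat{\mu}_p)(f_a)$ collapses to a \emph{single} reweighted integral $\Psi_{G_{c+c_{t-p}}}(\mu_p)(g_{a,t-p})$, where $g_{a,k}(x)=\Phi\big((a-\rho_k x)/\sigma_k\big)$ is nearly constant once $\rho_k$ is small. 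Each telescoping term is then bounded by the minimum of two estimates: one of order $\Delta_T$ (your importance-sampling bound) and one of order $|\rho_{t-p}|$, the latter being an \emph{absolute} bound, not proportional to the one-step perturbation --- this is exactly the kind of estimate your Lipschitz-forgetting framework cannot produce. The logarithm then arises not from any truncation radius but from splitting the sum: the last $k\asymp\log(1/\Delta_T)$ terms each contribute $\Delta_T$, the earlier ones contribute a geometric tail $\epsilon_\star^{k}\lesssim\Delta_T$, giving $k\Delta_T\asymp\Delta_T\log(1+\Delta_T^{-1})$; your diagnosis that the log is ``the price of the unbounded state space'' is therefore also incorrect. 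To repair your proposal you would need to replace the Doeblin step by this explicit Gaussian computation (or an equally strong, uniform-rate forgetting estimate valid for the specific pairs $(\mu_s,\Phi(\mu_{s-1}))$ with mass bounded below on a fixed compact set), which is precisely the content of Lemmas \ref{lemma:Q} and \ref{lemma:key0}.
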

\begin{proof}
See Section \ref{sub-p-lemma:key1}.
\end{proof}

\begin{remark}
Let $(\mu_t)_{t\geq 1}$ be a sequence in $\mathcal{P}(\R)$ such that, for all $t\geq 1$, we have $\mu_{t+1}=\Phi_t(\mu_t)$ for some mapping $\Phi_t:\mathcal{P}(\R)\rightarrow\mathcal{P}(\R)$. Then, informally speaking, Lemma \ref{lemma:key1} implies that if $\Phi_t$ is close to $\Phi$ for all $t\in\{1,\dots,T\}$ then the estimation error $\|\mu_t-\eta_t\|$ will be small for all $t\in\{1,\dots,T\}$. This message conveyed by Lemma \ref{lemma:key1} is very intuitive, but, to the best of our knowledge, has not been proved to be true before.
\end{remark}

\subsection{Proof of Theorem \ref{thm:PF}}

\begin{proof}

In what follows we let $T\geq 1$ be fixed and we use the convention that $\hat{\eta}^N_0M=\eta_1$. Then, by  Lemma  \ref{lemma:key1}, to obtain the result of Theorem \ref{thm:PF} it suffices to control the behaviour of  $\sup_{t\in\{1\,\dots,T\}}\|\eta_t^N-\hat{\eta}^N_{t-1} M\|$ as a function of $N$.

To this aim,  remark first that, with $\P$-probability one and for all $t\geq 1$ and all $N\geq 1$, the function $F_{\hat{\eta}^N_{t-1} M}:\R\rightarrow(0,1)$ is continuous. Therefore, by the  Dvoretzky-Kiefer-Wolfowitz inequality \citep[see][Corollary 1]{massart1990tight},   for all $ (\gamma,\kappa)\in (0,1)$ we have
\begin{align}\label{eq:Massart}
\sup_{t\in\{1,\dots,T\}}\P\Big(\|\eta_t^N-\hat{\eta}^N_{t-1} M\|\leq \kappa \big|\F_{t-1}^N\Big)\geq 1-\gamma,\quad\forall N\geq N_{\kappa,\gamma}:=\frac{\log(2/\gamma)}{2\kappa^2}
\end{align}
with the $\sigma$-algebra $\F_t^N$ as defined in the proof of Proposition \ref{prop:PF} for all $t\geq 0$ and all $N\geq 1$. Using \eqref{eq:Massart}, we can conclude that
\begin{align*}
\P\Big(\sup_{t\in\{1,\dots,T\}}\|\eta_t^N-\hat{\eta}^N_{t-1} M\|\leq \kappa \Big)\geq (1-\gamma)^T,\quad\forall N\geq N_{\kappa,\gamma},\quad\forall (\gamma,\kappa)\in (0,1).
\end{align*} 
We now let $q\in (0,1)$. Then, by applying the latter result with $\gamma=1-(1-q)^{1/T}$, we obtain  that
\begin{align}\label{eq:Massart3}
\P\Big(\sup_{t\in\{1,\dots,T\}}\|\eta_t^N-\hat{\eta}^N_{t-1} M\|\leq \kappa \Big)\geq 1-q,\quad\forall N\geq  N_{\kappa,1-(1-q)^{1/T}}. 
\end{align}
By Bernoulli's inequality \citep[see e.g.][]{li2013some},  we have $1-(1-q)^{1/T}\geq  T^{-1}q$  and thus
\begin{align*}
N_{\kappa,1-(1-q)^{1/T}}\leq  \frac{\log(2)+\log(T/q)}{2\kappa^2}\leq  \frac{\log(2)+\log(T/q)}{\kappa^2}.
\end{align*}
Together with \eqref{eq:Massart3}, this shows that
\begin{align*}
\P\Big(\sup_{t\in\{1,\dots,T\}}\|\eta_t^N-\hat{\eta}_{t-1} M\|\leq N^{-1/2}\sqrt{ \log(2)+\log(T/q)}\Big)\geq 1-q 
\end{align*}
and the result of Theorem \ref{algo:PF}  follows from  Lemma  \ref{lemma:key1}.
\end{proof}

\subsection{Proof of Theorem \ref{thm:main}}

\subsubsection{Preliminary result: A discrepancy bound for sampling from univariate mixtures}

The proof of the following lemma is inspired by that of \citet[][``Statz 2'']{Hlawka1972}.

\begin{lemma}\label{lemma:QMC_sample}
Let $x_1,\dots,x_M$ be $M$ points in $\R$ for some $M\geq 1$, and   for all $x\in\{x_1,\dots,x_M\}$   let $\mu_x\in\mathcal{P}(\R)$. In addition, let $\pi_M(\dd x)=\sum_{m=1}^M W_m \delta_{\{x_m\}}\in\mathcal{P}(\R)$ and $\mu=\sum_{m=1}^M (W_m \delta_{\{x_m\}}\otimes\mu_{x_m}) \in \mathcal{P}(\R^2)$. Lastly, let  $\varphi_\mu: (0,1)^2\rightarrow\R^2$ be defined by
\begin{align*}
\varphi_\mu(u)=\Big(z_1, F^{-1}_{\mu_{x_1}}(u_2)\Big),\quad  z_1=F^{-1}_{\pi_M}(u_1),\quad u=(u_1,u_2)\in(0,1)^2  
\end{align*}
and let  $\{u^n\}_{n=1}^N$ be a point set in $(0,1)^2$ for some $N\geq 1$. Assume that, for all $a\in\R$, the mapping $ \{x_1,\dots,x_M\}\ni x\mapsto F_{\mu_x}(a)$ is monotone and let $\mathcal{B}_2=\big\{\prod_{i=1}^2 (-\infty,a_i],\, (a_1,a_2)\in \R^2 \big\}$. Then, 
\begin{align}\label{eq:bound_QMCsample}
\sup_{B\in\mathcal{B}_2}\Big|\frac{1}{N}\sum_{n=1}^N\ind_B\big(\varphi_\mu(u^n)\big)-\mu(B)\Big|\leq   \sqrt{96} \,D^*_2\big(\{u^n\}_{n=1}^N\big)^{1/2}+12 D^*_2\big(\{u^n\}_{n=1}^N\big).
\end{align}
\end{lemma}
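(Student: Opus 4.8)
The plan is to exploit the fact that $\varphi_\mu$ transports the Lebesgue measure on $(0,1)^2$ onto $\mu$. Indeed, by construction of $\varphi_\mu$ via inverse transforms, if $U\sim\mathcal{U}(0,1)^2$ then $z_1=F^{-1}_{\pi_M}(U_1)\sim\pi_M$ and, conditionally on the selected atom $z_1$, the second coordinate $F^{-1}_{\mu_{z_1}}(U_2)\sim\mu_{z_1}$, so that $\varphi_\mu(U)\sim\mu$. Consequently $\mu(B)=\lambda_2\big(\varphi_\mu^{-1}(B)\big)$ for every $B$, and the left-hand side of \eqref{eq:bound_QMCsample} becomes the star-discrepancy-type error of the point set $\{u^n\}_{n=1}^N$ evaluated on the preimages $\varphi_\mu^{-1}(B)$, $B\in\mathcal{B}_2$. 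Writing $d:=D^*_2\big(\{u^n\}_{n=1}^N\big)$ for brevity, the whole argument then reduces to bounding this error uniformly over the family of preimage sets.

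Fix $B=(-\infty,a_1]\times(-\infty,a_2]\in\mathcal{B}_2$ and relabel the atoms so that $x_{(1)}\le\cdots\le x_{(M)}$. The nondecreasing map $F^{-1}_{\pi_M}$ partitions $(0,1)$ into consecutive intervals $I_1,\dots,I_M$ with $u_1\in I_j\Rightarrow z_1=x_{(j)}$, and $\{u_1:z_1\le a_1\}=(0,c)$ with $c=F_{\pi_M}(a_1)$. For $u_1\in I_j$ the membership $\varphi_\mu(u)\in B$ in the second coordinate reads $u_2<F_{\mu_{x_{(j)}}}(a_2)=:h_j$, via the standard equivalence $F^{-1}_{\mu}(u_2)\le a_2\iff u_2\le F_\mu(a_2)$. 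This is exactly where the monotonicity hypothesis is indispensable: since $x\mapsto F_{\mu_x}(a_2)$ is monotone and the atoms are sorted, the sequence $(h_j)_j$ is monotone in $j$. Hence $\varphi_\mu^{-1}(B)=\{(u_1,u_2):0<u_1<c,\ 0<u_2<h(u_1)\}$ for a monotone step function $h$ on $(0,c)$; that is, $\varphi_\mu^{-1}(B)$ is the region lying under a single monotone staircase anchored at the origin.

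Following the idea of \citet[][``Satz 2'']{Hlawka1972}, I would then bound the discrepancy of $\{u^n\}_{n=1}^N$ over such a monotone region by sandwiching it between unions of anchored boxes. For an integer $L\ge1$, slice the $u_2$-range into the $L$ levels $t_\ell=\ell/L$ and, using monotonicity of $h$, build an inner and an outer staircase $R_{\mathrm{in}}\subseteq\varphi_\mu^{-1}(B)\subseteq R_{\mathrm{out}}$, each a disjoint union of at most $L$ horizontal slabs $[\alpha,\beta)\times[t_{\ell-1},t_\ell)$. Two facts drive the estimate: (i) telescoping the slab widths gives $\lambda_2(R_{\mathrm{out}}\setminus R_{\mathrm{in}})\le 1/L$; and (ii) each slab is an inclusion--exclusion of at most four anchored boxes $[0,\cdot)\times[0,\cdot)$ (only two when $h$ is nonincreasing, since the slabs are then left-anchored), so the empirical-minus-true error of $\{u^n\}_{n=1}^N$ on each slab is a bounded multiple of $d$. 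Summing over the $\le L$ slabs and combining with (i) yields
\[
\Big|\tfrac1N\textstyle\sum_{n=1}^N\ind_{\varphi_\mu^{-1}(B)}(u^n)-\lambda_2\big(\varphi_\mu^{-1}(B)\big)\Big|\le \frac1L+c_0\,L\,d
\]
with $c_0$ a fixed small constant. Since this holds for every $B$ with an $L$ chosen depending only on $d$, the bound is uniform over $\mathcal{B}_2$.

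It remains to optimize $L$: the real minimizer of $L\mapsto 1/L+c_0Ld$ is of order $d^{-1/2}$, which produces the leading $d^{1/2}$ rate; taking $L=\lceil (c_0 d)^{-1/2}\rceil$ and accounting for the integer rounding, together with the regime where $d$ is not small (where one simply uses $L=1$), gives a bound of the form $c_1 d^{1/2}+c_2 d$. A direct but slightly tedious tracking of the constants through these two regimes and through both monotonicity directions yields $c_1=\sqrt{96}$ and $c_2=12$, which is \eqref{eq:bound_QMCsample}. I expect the main obstacle to be the second step rather than this bookkeeping: one must verify rigorously, through the generalized inverses and for either direction of monotonicity, that $\varphi_\mu^{-1}(B)$ is genuinely a region under a single monotone staircase: without the hypothesis on $x\mapsto F_{\mu_x}(a)$ the preimage need not be monotone, and the $\mathcal{O}(L)$-box approximation underlying the whole estimate would break down.
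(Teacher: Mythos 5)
Your proposal is correct and follows essentially the same route as the paper's proof: both reduce the error on $B$ to the discrepancy of $\{u^n\}_{n=1}^N$ over the preimage $\varphi_\mu^{-1}(B)$, use the monotonicity hypothesis to identify this preimage as a staircase region, approximate it by $\bigO(L)$ boxes (each controlled by a constant multiple of $D^*_2$ via inclusion--exclusion of anchored boxes) with an $\bigO(1/L)$ volume defect, and optimize $L\asymp D^*_2{}^{-1/2}$, exactly in the spirit of Hlawka's ``Satz 2'' that the paper itself credits. The only difference is cosmetic: you slice horizontally into inner/outer staircases of at most $L$ slabs, whereas the paper partitions $(0,1]^2$ into $L^2$ congruent squares and counts interior columns and boundary squares; both deliver constants at least as good as $(\sqrt{96},12)$.
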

\begin{proof}

 Let $B=(-\infty, a_1]\times (-\infty,a_2]$ for some $(a_1,a_2)\in\R^2$. Without loss of generality we assume below that the set $\{x_m\}_{m=1}^M$ is labelled in such a way that $x_1\leq x_2\leq\dots\leq x_M$. In addition, we assume that $a_1\geq x_1$ since  otherwise  the set $B$ is such that 
\begin{align*}
\Big|\frac{1}{N}\sum_{n=1}^N\ind_B\big(\varphi_\mu(u^n)\big)-\mu(B)\Big|=0.
\end{align*}
To prove the lemma we let $G(B)=\big\{ u\in (0,1)^2: \varphi_\mu(u)\in B\big\}$ so that
\begin{align}\label{eq:step1}
\frac{1}{N}\sum_{n=1}^N\ind_B\big(\varphi_\mu(u^n)\big)-\mu(B)=\frac{1}{N}\sum_{n=1}^N\ind_{G(B)}(u^n)-\lambda_2(G(B)).
\end{align}
Letting $j_B=\max\{i\in\{1,\dots,M\}: x_i\leq a_1\}$, and using the convention that $W_{0}=0$, we note that
\begin{align}\label{eq:G_set}
G(B)=\bigcup_{i=1}^{j_B}\Big(W_{i-1}, W_i\Big]\times \big(0, F_{\mu_{x_i}}(b_2)\big]\subseteq (0,1]^2.
\end{align}

To proceed further we let $L\in\mathbb{N}$ and  $\mathsf{P}$ be the partition of $(0,1]^2$ in congruent squares of side $\frac{1}{L}$  such that $W\in \mathsf{P}$ if and only if $W= (\xi_1/L, (\xi_1+1)/L]\times  (\xi_2/L, (\xi_2+1)/L]$ for some $(\xi_1,\xi_2)\in\{0,\dots,L-1\}^2$. Then, we let
\begin{align*}
\mathsf{U}_1=\Big\{ W\in \mathsf{P}:\,\, W\subset\mathring{G(B)}\Big\},\quad  \mathsf{U}_2=\Big\{ W\in \mathsf{P}:\,\, W\cap\partial( G(B))\neq\emptyset\Big\}
\end{align*}
and  we let $U_1=\cup_{W\in \mathsf{U}_1}W$ and $U_2=\cup_{W\in \mathsf{U}_2}W$ with the convention that $\cup_{W\in \mathsf{S}}=\emptyset$ if $\mathsf{S}= \emptyset $. With this notation in place, and letting $U_1'=G(B)\setminus U_1$, we have
\begin{equation}\label{eq:split_dis}
\begin{split}
\frac{1}{N}\sum_{n=1}^N\ind_{G(B)}(u^n)-\lambda_2(G(B))&=\frac{1}{N}\sum_{n=1}^N\ind_{U_1}(u^n)-\lambda_2(U_1)+\frac{1}{N}\sum_{n=1}^N\ind_{U_1'}(u^n)-\lambda_2(U_1') 
\end{split}
\end{equation}
and we start by studying the first term on the r.h.s.~of \eqref{eq:split_dis}.

To do so assume first that    $\mathsf{U}_1\neq\emptyset$ and, for  all $\xi_1\in\{0,\dots,L-1\}$, let
\begin{align*}
S_{\xi_1}=\Big\{k\in\{0,\dots,L-1\}:\,\, (\xi_1/L, (\xi_1+1)/L]\times  (k/L, (k+1)/L]\in \mathsf{U}_1\Big\}
\end{align*}
and let  $I=\{\xi_1\in\{0,\dots,L-1\}:\, S_{\xi_1}\neq\emptyset\}$. Then, using the expression \eqref{eq:G_set} for $G(B)$, it is direct to see that for all $\xi_1\in I$ the set
\begin{align*}
Q(\xi_1):=\bigcup_{k\in S_{\xi_1}} (\xi_1/L, (\xi_1+1)/L]\times  (k/L, (k+1)/L]
\end{align*}
is such that $Q(\xi_1)=(a,b]\subset(0,1]^2$ for some $a,b\in [0,1]^2$. Therefore, noting that $\cup_{\xi_1\in I} Q(\xi_1)=U_1$, we obtain that 
\begin{equation}\label{eq:split_dis_p1}
\begin{split}
\Big|\frac{1}{N}\sum_{n=1}^N\ind_{U_1}(u^n)-\lambda_2(U_1)\Big|  \leq \sum_{\xi_1\in I}\Big|\frac{1}{N}\sum_{n=1}^N\ind_{Q(\xi_1)}(u^n)-\lambda_2(Q(\xi_1))\Big|&\leq 4|I|  D^*_2\big(\{u^n\}_{n=1}^N\big)\\
&\leq 4L\,  D^*_2\big(\{u^n\}_{n=1}^N\big)
\end{split}
\end{equation}
where the second inequality uses the fact that, since $\{u^n\}_{n=1}^N$ is assumed to be a point set in $(0,1)^2$, we have \citep[see][Remarks 2-3 and Proposition 2.4, pages 14-15]{Niederreiter1992}
\begin{align}\label{eq:dis_01}
\sup_{(a,b]\subset (0,1]^2}\Big|\frac{1}{N}\sum_{n=1}^N\ind_{(a,b]}(u^n)-\lambda_2\big((a,b]\big)\Big|\leq 4 D^*_2\big(\{u^n\}_{n=1}^N\big).
\end{align}
If   $\mathsf{U}_1=\emptyset$ then $\frac{1}{N}\sum_{n=1}^N\ind_{U_1}(u^n)-\lambda_2(U_1)=0$ and thus \eqref{eq:split_dis_p1} also holds in this case.

We now study the second term on the r.h.s.~of \eqref{eq:split_dis}. To this end we note first that we can cover $U_1'$ with sets in $\mathsf{U}_2$, and thus 
\begin{align*}
-\lambda_2(U_2)\leq  -\lambda_2(U_1')\leq \frac{1}{N}\sum_{n=1}^N\ind_{U_1'}(u^n)-\lambda_2(U_1')\leq \frac{1}{N}\sum_{n=1}^N\ind_{U_2}(u^n)-\lambda_2(U_2)+\lambda_2(U_2).  
\end{align*}
By using this latter result, we obtain that 
\begin{equation}\label{eq:split_dis_p2}
\begin{split}
\Big|\frac{1}{N}\sum_{n=1}^N\ind_{U_1'}(u^n)-\lambda_2(U_1')\Big|&\leq \Big|\frac{1}{N}\sum_{n=1}^N\ind_{U_2}(u^n)-\lambda_2(U_2)\Big|+\lambda_2(U_2)\leq|\mathsf{U}_2|\Big(4D^*_2\big(\{u^n\}_{n=1}^N\big)+L^{-2}\Big)
\end{split}
\end{equation}
where the second   inequality holds by \eqref{eq:dis_01}.

Using \eqref{eq:G_set} and the fact that, by assumption, the function $\{x_1,\dots,x_M\}\ni x\mapsto F_{\mu_x}(b_2)$ is either non-increasing or non-decreasing,  it is readily checked that $|\mathsf{U}_2|\leq 2L$. Together with  \eqref{eq:step1} and \eqref{eq:split_dis}-\eqref{eq:split_dis_p2}, this implies that
\begin{align}\label{eq:lastLem}
\sup_{B\in\mathcal{B}_2}\Big|\frac{1}{N}\sum_{n=1}^N\ind_B\big(\varphi_\mu(u^n)\big)-\mu(B)\Big|\leq    12L D^*_2\big(\{u^n\}_{n=1}^N\big)+2L^{-1} 
\end{align}
and the result of the lemma follows  by applying \eqref{eq:lastLem} with 
$L=\big\lceil  \big\{6 D^*_2\big(\{u^n\}_{n=1}^N\big)\big\}^{-1/2}\big\rceil$.
\end{proof}

\subsubsection{Proof of the theorem}

\begin{proof}[Proof of Theorem \ref{thm:main}]
By Lemma \ref{lemma:QMC_sample},   for all $N\geq 1$ and using the convention that $\hat{\eta}^N_0M=\eta_1$, we have $\sup_{t\geq 1}\|\eta_t^N-\hat{\eta}_{t-1}^N M\|\leq \sqrt{238} D^*_2\big(\{U_{t}^n\}_{n=1}^N\big)$  and the result of Theorem \ref{thm:main} then follows from   Lemma  \ref{lemma:key1} and \eqref{eq:D_bounds}.
 \end{proof}

\subsection{Proof of Corollary \ref{cor:PF}}
\begin{proof}
In what follows we   let $(\kappa,q)\in(0,1)$ be arbitrary and $\delta_{N,T,q}=N^{-1 }\big(1+\log(T/q)\big)$ for all integers $N\geq 1$ and $T\geq 1$. In addition, we let $C_\kappa\in(\exp(2),\infty)$ so that the inequality $ \delta_{N,T,q}^{-1/2}\geq C_\kappa^{1/2}\kappa^{-1}\geq\exp(1)$ holds for all $N \geq C_\kappa \kappa^{-2}\big(1+  \log(T/q)\big)$. Then, using the fact that function $f(x)=x^{-1}\log(1+x)$ is non-increasing on $x\in(\exp(1),\infty)$, it follows that 
\begin{align*}
N&\geq C_\kappa \kappa^{-2}\big(1+  \log(T/q)\big)\implies   \delta_{N,T,q}^{ 1/2}\log\big(1+\delta_{N,T,q}^{-1/2}\big)\leq C_\kappa^{-1/2}\kappa\log\big(1+ C_\kappa^{1/2}\kappa^{-1}\big)
\end{align*}
where, with the constant $\bar{C}_1\in(0,1)$ as in the statement of Theorem \ref{thm:PF},
\begin{align}\label{eq:C_constraint}
\bar{C}_1C_\kappa^{-1/2}\kappa\log\big(1+ C_\kappa^{1/2}\kappa^{-1}\big)\leq \kappa\Leftrightarrow  \frac{\log\big(1+C_\kappa^{1/2}\kappa^{-1}\big)}{C_\kappa^{1/2}}\leq \frac{1}{\bar{C}_1}.
\end{align}
To proceed further we let  $D\in(1,\infty)$ and $f_D:(0,\infty)\rightarrow\R$ be defined by
\begin{align*}
f_D(x)=\frac{\log\big(1+x D v(x)\big)}{D v(x)},\quad v(x):=\exp(1)+\log(1+x),\quad x\in(0,\infty).
\end{align*}
Simple calculations show that the function $x\mapsto \log(1+x)/\log(x)$ is non-increasing  on the interval $(1,\infty)$.  Therefore, noting that, since $D\geq 1$, we have $xD v(x)\geq \exp(1)>1$ for all $x\geq 1$, it follows that 
\begin{equation}\label{eq:fD1}
\begin{split}
f_D(x)=\frac{\log\big(x Dv(x)\big)}{D v(x)}\frac{\log\big(1+x Dv(x)\big)}{\log\big(x Dv(x\big))}\leq  \frac{\log\big(x Dv(x)\big)}{D v(x)} \log\big(1+\exp(1)\big),\quad\forall x\geq 1
\end{split}
\end{equation}
where
\begin{align}\label{eq:fD2}
\frac{\log\big(x D v(x)\big)}{D v(x)}\leq \frac{1}{D}\bigg(1+  \frac{\log(D)}{\exp(1)}+\sup_{z\geq 1}\frac{\log\big(v(x))}{v(x)}\bigg),\quad\forall x\geq 1.
\end{align}
Consequently, by  \eqref{eq:fD1}-\eqref{eq:fD2},  for any $\epsilon\in(0,\infty)$ there exists a constant $D_\epsilon\in(1,\infty)$ such that we have  $\sup_{x\in[1,\infty]}f_{D}(x)\leq \epsilon$ for all $D\in[ D_\epsilon,\infty)$.

By using this latter result, and assuming without loss of generality that $\bar{C}_1>1$, it follows that for any   constant $D\geq D_{1/\bar{C}_1}\geq 1$   the second inequality in \eqref{eq:C_constraint} holds for  $C_\kappa=D^2\big(\exp(1)+\log(1+\kappa^{-1})\big)^2>\exp(2)$. The result of the corollary then follows from Theorem \ref{thm:PF}. 
\end{proof}
 
 \subsection{Proof of Corollary \ref{cor:SQMC}}

 \begin{proof}
The result of Corollary \ref{cor:SQMC} easily follows from Theorem \ref{thm:main} and the calculations done in the proof of Corollary \ref{cor:PF}. The details are therefore omitted to save space.
 \end{proof}

 \section{Proof of Proposition \ref{prop:stability_model} and proof of Lemma  \ref{lemma:key1}  \label{app:aux}}

 \subsection{Additional notation}
 
For a  function $f\in\mathcal{C}^1(\R)$ we let $V(f)=\int_\R |f'(x)|\dd x$ with the convention that $V(f)=0$ if $ f'(x) =0$ for all $x\in\R$. Abusing notation, for a function $f:\R\rightarrow\R$ continuously differentiable on some non-empty interval $(a,b)$ we let $V(\ind_{(a,b]}f)=V(\ind_{(a,b)}f)=\int_a^b |f'(x)|\dd x$. Next, for all $(z,r)\in\R^2$ and $s\in (0,\infty)$   we let $M_{r,s^2}(z,\dd x)=\mathcal{N}(r z,s^2)$ and $G_s(z)=e^{-s z^2/2}$, and below $\varphi(\cdot)$  denote the probability  density function of the $\mathcal{N}(0,1)$ distribution. Moreover, for all $(a,r)\in\R^2$ and $s\in(0,\infty)$ we let $h_{a,r,s}:\R\rightarrow [-1,1]$ be defined by
\begin{align}\label{h_def}
h_{a,r,s}(x)=\Phi\bigg(\frac{a-r x}{s}\bigg)-\Phi\bigg(\frac{a }{s}\bigg),\quad x\in\R 
\end{align}
while, for any    bounded function $H:\R\rightarrow (0,\infty)$,  we let $\Psi_H:\mathcal{P}(\R)\rightarrow \mathcal{P}(\R)$ be defined by
\begin{align*}
\Psi_H(\pi)=\frac{H(x)\pi(\dd x)}{\pi(H)},\quad \pi\in\mathcal{P}(\R).
\end{align*}

\subsection{Preliminary results}

\subsubsection{A simple but useful technical lemma}

\begin{lemma}\label{lemma:h_function}
For all $(a,r)\in\R^2$ and $s\in(0,\infty)$ we have $V(h_{a,r,s})\leq 1$ and
\begin{align*}
| h_{a,r,s}(x)|\leq   \frac{|rx|}{s\sqrt{2\pi}},\quad\forall x\in\R.
\end{align*}
\end{lemma}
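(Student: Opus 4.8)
The plan is to prove the two claims of Lemma \ref{lemma:h_function} separately, since they are genuinely independent elementary estimates on the function $h_{a,r,s}(x)=\Phi\big((a-rx)/s\big)-\Phi(a/s)$. Throughout I would treat $a,r,s$ as fixed parameters and regard $h_{a,r,s}$ as a function of the single variable $x$.

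For the total-variation bound $V(h_{a,r,s})\leq 1$, the key observation is that $h_{a,r,s}$ is a shifted-and-scaled composition of the monotone function $\Phi$ with the affine map $x\mapsto (a-rx)/s$, so it is itself monotone in $x$. First I would compute the derivative
\begin{align*}
h_{a,r,s}'(x)=-\frac{r}{s}\,\varphi\Big(\frac{a-rx}{s}\Big),
\end{align*}
which never changes sign since $\varphi>0$ and $r,s$ are constants. Because $h_{a,r,s}$ is monotone, its total variation $V(h_{a,r,s})=\int_\R|h_{a,r,s}'(x)|\,\dd x$ equals the total change $\sup_x h_{a,r,s}(x)-\inf_x h_{a,r,s}(x)$, i.e.\ the length of the range of $h_{a,r,s}$. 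Since $\Phi$ takes values in $(0,1)$, the map $x\mapsto \Phi\big((a-rx)/s\big)$ has range contained in $(0,1)$, hence $h_{a,r,s}$ has range of length at most $1$. (The degenerate case $r=0$ gives $h_{a,r,s}\equiv 0$ with $V=0$, consistent with the stated convention.) This yields $V(h_{a,r,s})\leq 1$.

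For the pointwise bound $|h_{a,r,s}(x)|\leq |rx|/(s\sqrt{2\pi})$, the natural route is the mean value theorem applied to $\Phi$ between the arguments $(a-rx)/s$ and $a/s$, whose difference is exactly $-rx/s$. Writing $h_{a,r,s}(x)=\Phi\big((a-rx)/s\big)-\Phi(a/s)$, there is an intermediate point $\xi$ such that $h_{a,r,s}(x)=\Phi'(\xi)\cdot(-rx/s)=\varphi(\xi)\cdot(-rx/s)$. Taking absolute values and using the elementary bound $\varphi(\xi)\leq \varphi(0)=1/\sqrt{2\pi}$ for all $\xi\in\R$ gives $|h_{a,r,s}(x)|\leq |rx|/(s\sqrt{2\pi})$, as required.

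I do not expect a substantial obstacle here; this is a routine calculus lemma. The only points requiring care are (i) handling the degenerate $r=0$ case so that the monotonicity argument and the stated convention $V(f)=0$ agree, and (ii) making sure the mean value theorem step is applied to $\Phi$ rather than to $h_{a,r,s}$ directly, so that the universal Gaussian-density bound $\varphi\leq 1/\sqrt{2\pi}$ can be invoked cleanly. Both bounds are sharp enough for the intended downstream use, and neither requires anything beyond the monotonicity of $\Phi$, the positivity and uniform boundedness of $\varphi$, and a single application of the mean value theorem.
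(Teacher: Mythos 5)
Your proposal is correct and follows essentially the same route as the paper: both compute $h_{a,r,s}'(x)=-\tfrac{r}{s}\varphi\big((a-rx)/s\big)$ for the variation bound and both use the mean value theorem together with $\varphi\leq 1/\sqrt{2\pi}$ for the pointwise bound. The only cosmetic difference is in the first step, where the paper evaluates $\int_\R |h'_{a,r,s}(x)|\,\dd x$ exactly (it equals $1$) by the substitution $z=rx$, while you reach the same conclusion by noting that a monotone function's variation equals the length of its range, which is contained in an interval of length $1$.
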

\begin{proof}
Let $(a,r)\in\R^2$ and $s\in(0,\infty)$. If $r=0$ the result of the lemma is trivial and we therefore assume  that $r\neq 0$ in what follows.   In this scenario, we have
\begin{align*}
V(h_{a,r,s})=\int_\R|h'_{a,r,s}(x)|\dd x&=  \int_{\R}\frac{|r|}{s} \varphi\Big(\frac{a-  rx}{s}\bigg)\dd x= \int_{\R}\frac{1}{s} \varphi\Big(\frac{a- z}{s}\Big)  \dd z=1 
\end{align*}
showing the first part of the lemma. On the other hand, using the mean value theorem and noting that $|\Phi'(x)|=|\varphi(x)|\leq 1/\sqrt{2\pi}$ for all $x\in\R$, we obtain that
\begin{align*}
| h_{a,r,s}(x)|&=\bigg|\Phi\bigg(\frac{a-r x}{s}\bigg)-\Phi\bigg(\frac{a}{s}\bigg)\bigg|\leq  \frac{|rx|}{s\sqrt{2\pi}},\quad\forall x\in\R 
\end{align*}
showing the second part of the lemma. The proof is complete.
\end{proof}

\subsubsection{A general 1-dimensional Koksma-Hlawka inequality}

The next result is a general $s=1$ dimensional Koksma-Hlawka inequality, where the integration is w.r.t.~an arbitrary distribution $\mu\in\mathcal{P}(\R)$. By contrast, in the original Koksma-Hlawka inequality, the dimension $s\geq 1$ is arbitrary but the integration is w.r.t.~the $\mathcal{U}(0,1)^s$ distribution  \citep[see e.g.][Proposition 2.18, page 33]{dick2010digital}. It is also worth mentioning that  \citet{aistleitner2014functions} derived  a version of the Koksma-Hlawka inequality where the integration is w.r.t.~an arbitrary probability distribution on $(0,1)^s$ for some arbitrary $s\geq 1$.

\begin{lemma}\label{lemma:KH_in}
Let $-\infty\leq a<b\leq \infty$ and let $f:\R\rightarrow\R$ be continuously differentiable on the interval $(a,b)$ and such that $V(f\ind_{(a,b)})<\infty$,    and let    $\pi,\mu\in\mathcal{P}(\R)$. Then,   with the convention that $f(a)=0$ when $a=-\infty$ and with the convention that $f(b)=0$ when $b=\infty$, we have
\begin{equation*}
\begin{split}
\Big|\int_{a}^b f(x)\big(\pi(\dd x)-\mu(\dd x)\big)\Big|\leq \Big(|f(a)|+|f(b)|+V( f\ind_{(a,b)})\Big)\|\pi-\mu\|.
\end{split}
\end{equation*}
\end{lemma}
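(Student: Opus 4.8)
The plan is to reduce the claim to the one-dimensional Stieltjes integration-by-parts formula applied to the signed measure $\nu=\pi-\mu$. I introduce $\delta(x)=F_\pi(x)-F_\mu(x)$, the difference of the two cumulative distribution functions, which is precisely the distribution function of $\nu$ in the sense that $\delta(x)=\nu\big((-\infty,x]\big)$. The key observation, coming directly from the definition of the Kolmogorov distance, is that $\sup_{x\in\R}|\delta(x)|=\|\pi-\mu\|$. Since the hypothesis $V(f\ind_{(a,b)})=\int_a^b|f'(x)|\,\dd x<\infty$ forces $f$ to be of bounded variation on $(a,b)$, $f$ admits finite one-sided limits at both endpoints (including when they are infinite) and is therefore bounded on $(a,b)$, so all the quantities appearing below are well defined and all the integrals are finite.

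First I would establish the identity on a compact subinterval. For any $a<a'<b'<b$, because $f$ is $C^1$ (hence continuous) on $(a,b)$ and $\dd f=f'\,\dd x$ is absolutely continuous, the Stieltjes integration-by-parts formula gives
\begin{align*}
\int_{(a',b']}f\,\dd\nu = f(b')\delta(b')-f(a')\delta(a')-\int_{a'}^{b'}\delta(x)f'(x)\,\dd x.
\end{align*}
This is the heart of the argument: it trades the integral against the unknown signed measure $\nu$ for an integral of $f'$ against the \emph{controlled} function $\delta$, plus two boundary terms.

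Next I would pass to the limit $a'\downarrow a$ and $b'\uparrow b$. The left-hand side converges to $\int_a^b f\,\dd\nu$ by dominated convergence, using that $f$ is bounded on $(a,b)$ and $\nu$ is a finite signed measure; likewise $\int_{a'}^{b'}\delta f'\,\dd x\to\int_a^b\delta f'\,\dd x$ using $|\delta|\le\|\pi-\mu\|$ and the integrability of $f'$. For the boundary terms, at a finite endpoint one simply recovers $f(a)\delta(a)$ (respectively $f(b)\delta(b)$); at an infinite endpoint, say $b=\infty$, one has $\delta(x)\to F_\pi(\infty)-F_\mu(\infty)=0$ while $f(x)$ tends to a finite limit, so $f(x)\delta(x)\to 0$, in agreement with the stated convention $f(b)=0$, and similarly at $a=-\infty$. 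This yields
\begin{align*}
\int_a^b f\,\dd\nu = f(b)\delta(b)-f(a)\delta(a)-\int_a^b\delta(x)f'(x)\,\dd x.
\end{align*}

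Finally I would take absolute values and bound each term using $|\delta(x)|\le\|\pi-\mu\|$ for every $x$ together with $\int_a^b|f'(x)|\,\dd x=V(f\ind_{(a,b)})$, giving $|f(b)\delta(b)|\le|f(b)|\,\|\pi-\mu\|$, $|f(a)\delta(a)|\le|f(a)|\,\|\pi-\mu\|$ and $\big|\int_a^b\delta f'\,\dd x\big|\le V(f\ind_{(a,b)})\,\|\pi-\mu\|$; the triangle inequality then produces exactly the claimed bound. I expect the main obstacle to be the endpoint bookkeeping: making the integration-by-parts identity and the limiting argument rigorous when $\pi$ and $\mu$ may carry atoms and when $a$ or $b$ is infinite, and verifying that the conventions $f(a)=0$ and $f(b)=0$ at infinite endpoints are precisely what makes the boundary contributions vanish—which they do, because $\delta$ vanishes at $\pm\infty$ for any pair of probability measures.
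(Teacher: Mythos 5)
Your proposal is correct and follows essentially the same route as the paper's proof: both rest on the Stieltjes integration-by-parts identity applied to $\delta = F_\pi - F_\mu$, bound the boundary terms and $\int \delta f'$ by $\sup_x|\delta(x)| = \|\pi-\mu\|$ together with $\int_a^b|f'| = V(f\ind_{(a,b)})$, and dispose of the boundary contributions at infinite endpoints via a limiting argument using $\delta(x)\to 0$ as $x\to\pm\infty$. The only (immaterial) difference is that you exhaust $(a,b)$ by compact subintervals in all cases, whereas the paper treats finite endpoints directly and only passes to the limit when an endpoint is infinite.
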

\begin{proof}

Assume first $(a,b)\in\R^2$ and note that,  since the function $f$ is continuous, we have $\int_{a}^b f(x)\dd\big(F_\pi(x)-F_\mu(x)\big)= \int_{a}^b  f(x)(\pi(\dd x)-\mu(\dd x))$. Then,   using the integration by part formula for   Riemann-Stieltjes integrals \citep[see][Theorem 7.6, page 144]{apostol}, we have
\begin{equation*}
\begin{split}
\int_{a}^b \big(F_\pi(x)-F_\mu(x)\big) f'(x)\dd x&=\int_{a}^b \big(F_\pi(x)-F_\mu(x)\big) \dd f (x)\\
&=f(b)\big(F_\pi(b)-F_\mu(b)\big)-f(a)\big(F_\pi(a)-F_\mu(a)\big)-\int_{a}^b f(x)\dd\big(F_\pi(x)-F_\mu(x)\big)
\end{split}
\end{equation*}
from which we deduce that
\begin{equation}\label{eq:Rs}
\begin{split}
\Big|\int_{a}^b  f(x)(\pi(\dd x)-\mu(\dd x))\Big|&\leq |f(b)|\, \big|F_\pi(b)-F_\mu(b)\big|+|f(a)|\,\big|F_\pi(a)-F_\mu(a)\big|\\
&+\Big|\int_{a}^b \big(F_\pi(x)-F_\mu(x)\big) f'(x)\dd x\Big|.
\end{split}
\end{equation}
The result of the lemma when $(a,b)\in\R^2$ directly follows from \eqref{eq:Rs}.

Assume now that $a=-\infty$ and that $b\in\R$. Note that under the assumptions of the lemma we have $\sup_{x\in\R}|f(x)|<\infty$ while, by using the reverse  Fatou's lemma, we obtain that $\lim_{x\rightarrow -\infty}F_\pi(x)=\lim_{a\rightarrow-\infty}F_\mu(x)=0$. Therefore, using \eqref{eq:Rs},
\begin{align*}
\Big|\int_{-\infty}^b  f(x)(\pi(\dd x)-\mu(\dd x))\Big|&=\Big|\lim_{a\rightarrow-\infty}\int_{a}^b f(x)\dd\big(F_\pi(x)-F_\mu(x)\big)\Big|\\
&\leq \big|f(b)|\,\big|F_\pi(b)-F_\mu(b)|\\
&+\limsup_{a\rightarrow-\infty}\Big|\int_{a}^b \big(F_\pi(x)-F_\mu(x)\big) f'(x)\dd x\Big|\\
&\leq \|\pi-\mu\|\Big(|f(b)|+ V(\ind_{(-\infty,b)} f)\Big)
\end{align*}
proving the result of the lemma  when $a=-\infty$ and $b\in\R$.

Using Fatou's lemma we have $\lim_{x\rightarrow\infty}F_\pi(x)=\lim_{a\rightarrow\infty}F_\mu(x)=1$, and    the result of the lemma for the two remaining case (i.e.~for $b=\infty$ and $a\in\R\cup\{-\infty\}$) can proved using similar calculations as above, and is  therefore omitted to save space.
\end{proof}

\subsubsection{A useful property of the model of Section \ref{sub:model}\label{sub:lemmaQ}}

\begin{lemma}\label{lemma:Q}
Let  $c_0=\sigma_0=0$, $\rho_0=1$, and let $(c_k)_{k\geq 1}$, $(\rho_k)_{k\geq 1}$ and $(\sigma_k)_{k\geq 1}$ be such that
\begin{align*}
c_{k}= \frac{(c+c_{k-1})\rho^2}{1+(c+c_{k-1})\sigma^2},\quad \rho_{k}=\prod_{i=0}^{k-1}\frac{\rho}{1+(c+c_i)\sigma^2},\quad \sigma_{k}^2=\sigma_{k-1}^2+\frac{\rho_{k-1}^2\sigma^2}{1+(c+c_{k-1})\sigma^2}
\end{align*}
for all $k\geq 1$. Then, for all $k\geq 1$, we have
 \begin{align}\label{eq:Qk}
Q^k(x',\dd x)\propto G_{c_k}(x')M_{\rho_k,\sigma^2_k}(x',\dd x),\quad\forall x'\in\R.
\end{align}
In addition, the sequence $(c_k)_{k\geq 0}$ is such that $c\rho^2/(1+c\sigma^2)\leq c_k\leq \rho^2/\sigma^2$ for all $k\geq 1$, and there exist constants $(\sigma_\infty,C_\star)\in(0,\infty)^2$ and $\epsilon_\star\in(0,1)$ such that $|\rho_k|\leq C_\star \epsilon_\star^k$ and such that $|\sigma_k^2-\sigma_\infty^2|\leq C^2_\star\epsilon_\star^{2k}$ for all $k\geq 0$.
\end{lemma}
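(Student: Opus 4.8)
The plan is to prove the three assertions in turn, all resting on elementary Gaussian algebra together with a careful study of the scalar recursion $c_k=g(c_{k-1})$, where $g(x)=\frac{(c+x)\rho^2}{1+(c+x)\sigma^2}$.

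For the representation \eqref{eq:Qk} I would argue by induction on $k$. The base case $k=1$ is a completion of the square: in the product $M(x',\dd x)G(x)=\mathcal{N}(x;\rho x',\sigma^2)\,e^{-cx^2/2}$, collecting the terms that depend on $x'$ produces exactly the factor $G_{c_1}(x')$ times a $\mathcal{N}(\rho_1 x',\sigma_1^2)$ density in $x$, with $c_1,\rho_1,\sigma_1^2$ matching the stated formulas at $k=1$ (recall $c_0=0$, $\rho_0=1$, $\sigma_0=0$). For the inductive step I would write $Q^k(x',\dd x)=\int_{\R} Q(x',\dd x'')\,Q^{k-1}(x'',\dd x)$ and use $G(x'')G_{c_{k-1}}(x'')=G_{c+c_{k-1}}(x'')$, so that the same completion of the square (now with $c$ replaced by $c+c_{k-1}$) turns $M(x',\dd x'')G_{c+c_{k-1}}(x'')$ into $G_{c_k}(x')$ times a Gaussian in $x''$ with mean coefficient $\frac{\rho}{1+(c+c_{k-1})\sigma^2}$ and variance $\frac{\sigma^2}{1+(c+c_{k-1})\sigma^2}$; the appearance of $c_k$ here is precisely the recursion defining it. Marginalising this Gaussian through the $\mathcal{N}(\rho_{k-1}x'',\sigma_{k-1}^2)$ kernel yields a $\mathcal{N}(\rho_k x',\sigma_k^2)$ law, the composition of mean coefficients reproducing the product formula for $\rho_k$ and the variance-addition rule reproducing the recursion for $\sigma_k^2$. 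This is the routine part.

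For the bounds on $c_k$ I would note that $g$ maps $[0,\infty)$ into itself, is increasing, and satisfies $g(x)=\rho^2/(\sigma^2+1/(c+x))<\rho^2/\sigma^2$; since $c_0=0$ this gives $c_k<\rho^2/\sigma^2$, while $c_{k-1}\geq 0$ and monotonicity give $c_k=g(c_{k-1})\geq g(0)=\frac{c\rho^2}{1+c\sigma^2}$ for all $k\geq 1$. The same monotonicity together with $c_1=g(0)>0=c_0$ shows inductively that $(c_k)$ is increasing; being bounded it converges to a limit $c_\infty>0$ that is a fixed point of $g$.

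The geometric statements are where the real work lies, and I expect the crux to be the following. Writing $\rho_k=\prod_{i=0}^{k-1}r_i$ with $r_i=\frac{\rho}{1+(c+c_i)\sigma^2}$, geometric decay of $\rho_k$ needs the limiting factor $r_\infty=\frac{\rho}{1+(c+c_\infty)\sigma^2}$ to satisfy $|r_\infty|<1$; the crude per-step bound $|r_i|\le|\rho|/(1+c\sigma^2)$ does not deliver this. The key step is to extract $|r_\infty|<1$ directly from the fixed-point identity: setting $a=c+c_\infty$, the relation $c_\infty=\frac{a\rho^2}{1+a\sigma^2}$ rearranges to $a\rho^2=(a-c)(1+a\sigma^2)$, whence $r_\infty^2=\frac{\rho^2}{(1+a\sigma^2)^2}=\frac{a-c}{a(1+a\sigma^2)}$, and this is strictly below $1$ because it is equivalent to $-c<a^2\sigma^2$, which holds trivially. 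Given $|r_\infty|<1$, I would fix $\epsilon_\star\in(|r_\infty|,1)$; since $c_k\uparrow c_\infty$ we have $|r_i|\to|r_\infty|$, so $|r_i|\le\epsilon_\star$ for all $i$ beyond some finite $K$, and absorbing the finitely many initial factors into a constant gives $|\rho_k|\le C_\star\epsilon_\star^k$.

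Finally, for $\sigma_k^2$ I would unfold the recursion to $\sigma_k^2=\sum_{j=0}^{k-1}\frac{\rho_j^2\sigma^2}{1+(c+c_j)\sigma^2}$ and bound each summand by $\rho_j^2\sigma^2\le C_\star^2\sigma^2\epsilon_\star^{2j}$ (using $1+(c+c_j)\sigma^2\ge 1$). The series then converges to a finite limit $\sigma_\infty^2$, which is strictly positive since its $j=0$ term equals $\sigma^2/(1+c\sigma^2)>0$; and the tail estimate $|\sigma_\infty^2-\sigma_k^2|=\sum_{j\ge k}\frac{\rho_j^2\sigma^2}{1+(c+c_j)\sigma^2}\le \frac{C_\star^2\sigma^2}{1-\epsilon_\star^2}\,\epsilon_\star^{2k}$ yields the claimed bound after enlarging $C_\star$ (and, if one wishes a single rate governing both estimates, $\epsilon_\star$) appropriately. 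I would remark in passing that the geometric rate of convergence $c_k\to c_\infty$ — which would follow from $g'(c_\infty)=r_\infty^2<1$ — is not actually required, since only the decay of $\rho_j$ and the crude bound $c_j\ge 0$ enter the argument for $\sigma_k^2$.
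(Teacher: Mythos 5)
Your proposal is correct, and for three of the four assertions it coincides with the paper's own proof: the induction for \eqref{eq:Qk} via the completion-of-the-square identity $M_{\rho,\sigma^2}(x',\dd x)G_a(x)\propto G_{\tilde a}(x')M_{\tilde\rho,\tilde\sigma^2}(x',\dd x)$ applied with $a=c+c_{k-1}$, the monotone-bounded fixed-point analysis of $c_k=g(c_{k-1})$, and the unfolding of $\sigma_k^2$ into the series $\sum_{j=0}^{k-1}\rho_j^2\sigma^2/(1+(c+c_j)\sigma^2)$ with a geometric tail bound are exactly the paper's steps. Where you genuinely depart from the paper is on the central point, namely that the limiting factor $r_\infty=\rho/(1+(c+c_\infty)\sigma^2)$ satisfies $|r_\infty|<1$. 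The paper argues by cases: if $|\rho|<1+c\sigma^2$ the crude per-step bound already gives $\epsilon_\star=|\rho|/(1+c\sigma^2)<1$; otherwise it computes the fixed point $c_\star$ explicitly via the quadratic formula, shows $c_\star>(|\rho|-(1+c\sigma^2))/\sigma^2$, and deduces that beyond some index $k_\star$ every factor is at most $\epsilon_\star=|\rho|/(1+(c+c_{k_\star})\sigma^2)<1$. You replace all of this by the identity $r_\infty^2=(a-c)/\big(a(1+a\sigma^2)\big)$ with $a=c+c_\infty$, read off from the fixed-point equation, which is $<1$ if and only if $-c<a^2\sigma^2$, i.e.\ always; I verified this computation, and your side remark that $g'(c_\infty)=r_\infty^2$ is also correct. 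Your route is shorter, avoids both the case distinction and the explicit root formula, and makes the stability transparent for every $\rho\in\R$; the paper's route buys only a fully explicit $\epsilon_\star$ in the easy case, since in the hard case its constants depend on the non-explicit index $k_\star$ just as yours depend on $K$. The only blemish is cosmetic and confined to the degenerate case $\rho=0$, where your strict claims $c_\infty>0$ and $c_k<\rho^2/\sigma^2$ fail (everything vanishes and the lemma is trivial); the lemma asserts only non-strict inequalities there, and the paper glosses over this case just as quickly.
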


\begin{proof}
The result stated in \eqref{eq:Qk}   can be deduced from the calculations in \citet{DelMoral2001} while the convergence result for the sequence $(\sigma_k)_{k\geq 1}$  can be directly obtained from \citet[][Theorem 3.5]{del2023theoretical}. However, for sake of completeness, a proof of the whole lemma is presented below.

To prove the first part of the lemma  remark first that, for all $a\in (0,\infty)$,  we have
\begin{align}\label{eq:Qk_1}
M_{\rho,\sigma^2}(x',\dd x)G_a(x)\propto G_{\tilde{a}}(x')M_{\tilde{\rho},\tilde{\sigma}^2}(x',\dd x),\quad\forall x'\in\R
\end{align}
where
\begin{align}\label{eq:Qk_2}
\tilde{a}=\frac{a\rho^2}{1+a\sigma^2},\quad  \tilde{\rho}=\frac{\rho}{1+a \sigma^2},\quad \tilde{\sigma}^2=\frac{\sigma^2}{1+a\sigma^2}.
\end{align}
We now  prove \eqref{eq:Qk} by induction on $k$. For $k=1$ the result given in \eqref{eq:Qk} directly follows  by applying  \eqref{eq:Qk_1}-\eqref{eq:Qk_2} with $a=c$. Assume now that \eqref{eq:Qk}  holds for some $k\geq 1$. Then, by using first the inductive hypothesis and then by  applying  \eqref{eq:Qk_1}-\eqref{eq:Qk_2} with $a=c+c_k$, for all $x''\in\R$ we have
\begin{align*}
M_{\rho,\sigma^2}(x'',\dd x')G_c(x')Q^k(x',\dd x)&\propto M_{\rho,\sigma^2}(x'',\dd x')G_c(x')G_{c_k}(x')M_{\rho_k,\sigma^2_k}(x',\dd x)\\
&=M_{\rho,\sigma^2}(x'',\dd x')G_{c+c_k}(x')M_{\rho_k,\sigma^2_k}(x',\dd x)\\
&\propto G_{c_{k+1}}(x'') M_{\tilde{\rho}_k,\tilde{\sigma}_k^2}(x'',\dd x') M_{\rho_k,\sigma^2_k}(x',\dd x)
\end{align*}
with
\begin{align*}
\tilde{\rho}_k=\frac{\rho}{1+(c+c_k) \sigma^2},\quad \tilde{\sigma}_k^2=\frac{\sigma^2}{1+(c+c_k)\sigma^2}.
\end{align*}
Therefore, for all $x''\in\R$ we have
\begin{align*}
Q^{k+1}(x'',\dd x)&\propto G_{c_{k+1}}(x'') \int_\R M_{\tilde{\rho}_k,\tilde{\sigma}_k^2}(x'',\dd x') M_{\rho_k,\sigma^2_k}(x',\dd x)\\
&=G_{c_{k+1}}(x'') M_{\tilde{\rho}_k\rho_k, \rho_k^2\tilde{\sigma}^2_k+\sigma^2_k}(x'',\dd x)\\
&=G_{c_{k+1}}(x'') M_{\rho_{k+1}, \sigma^2_{k+1}}(x'',\dd x) 
\end{align*}
and the proof of \eqref{eq:Qk} is complete.

To prove the second part of the lemma let $f:[0,\infty)\rightarrow [0,\infty)$ be defined by
\begin{align*}
f(x)=\frac{(c+x)\rho^2}{1+(c+x)\sigma^2},\quad x\in [0,\infty)
\end{align*}
so that $c_{k}=f(c_{k-1})$ for all $k\geq 1$. Easy computations show that the function $f$ is non-decreasing on  $[0,\infty)$, and thus the sequence $(c_k)_{k\geq 0}$ is non-decreasing and such that $c\rho^2/(1+c\sigma^2)\leq c_k\leq \rho^2/\sigma^2$ for all $k\geq 1$. 

To study the sequence $(\rho_k)_{k\geq 0}$   assume first that $|\rho|<1+c\sigma^2$. In this case, we trivially have that $|\rho_k|\leq \epsilon_\star^k$ for all $k\geq 0$ with $\epsilon_\star=|\rho|/(1+c\sigma^2)$. Assume now that $|\rho|\geq 1+c\sigma^2$ and note that, for all $k\geq 1$, we have
\begin{align}\label{eq:rho_cond}
 \frac{|\rho|}{1+(c+c_k)\sigma^2}<1&\Leftrightarrow  c_k>\frac{|\rho|-(1+c\sigma^2)}{\sigma^2}.
 \end{align}
To proceed further we remark that, since the sequence  $(c_k)_{k\geq 0}$ is non-decreasing and bounded, the sequence converges in $[0,\infty)$ and  we let $c_\infty=\lim_{k\rightarrow\infty}c_k$. In addition, simple calculations show that
\begin{align*}
c_\star:=\frac{\rho^2-(1+c\sigma^2 )+\sqrt{(1+c\sigma^2-\rho^2)^2+4c\sigma^2\rho^2}}{2\sigma^2}
\end{align*}
is the unique value of $x\in [0,\infty)$ such that $x=f(x)$. Moreover, since we are assuming that  $|\rho|\geq 1+c\sigma^2>1$, it follows that $\rho^2-(1+c\sigma^2)\geq 0$ and thus the constant $c_\star$ is such that
\begin{align}\label{eq:c_s_bound}
c_\star\geq \frac{\rho^2-(1+c\sigma^2 )+\sqrt{(1+c\sigma^2-\rho^2)^2 }}{2\sigma^2}=\frac{\rho^2-(1+c\sigma^2 )}{\sigma^2}>\frac{|\rho|-(1+c\sigma^2 )}{\sigma^2}.
\end{align}
On the other hand, since the function $f$ is continuous on $[0,\infty)$ we have 
\begin{align*}
c_\infty=\lim_{k\rightarrow\infty} c_{k+1}=\lim_{k\rightarrow\infty} f(c_{k})=f(\lim_{k\rightarrow\infty} c_{k})=f(c_\infty)
\end{align*}
showing that   $c_\infty=c_\star$, and thus $\lim_{k\rightarrow\infty}c_k=c_\star$. Together with \eqref{eq:c_s_bound}, this implies that there exists a $k_\star\in \mathbb{N}$  such that $c_{k_\star}>(|\rho|-(1+c\sigma^2))/\sigma^2$, and thus, using \eqref{eq:rho_cond} and the fact that the sequence $(c_k)_{k\geq 0}$ is non-decreasing, we have
\begin{align*}
\frac{|\rho|}{1+(c+c_k)\sigma^2}\leq \epsilon_\star:= \frac{|\rho|}{1+(c+c_{k_\star})\sigma^2}<1,\quad\forall k\geq k_\star 
\end{align*}
implying that $|\rho_k|\leq C \epsilon_\star^k$ for all $k\geq 0$ and with $C=\big(|\rho|/(1+c\sigma^2)\big)^{k_\star+1}$. 

Finally, to study the sequence $(\sigma_k)_{k\geq 0}$ note first that
\begin{align*}
\sigma_{k}^2&=\sum_{s=0}^{k-1}\frac{\rho_{s}^2\sigma^2}{1+(c+c_{s})\sigma^2},\quad\forall k\geq 1
\end{align*}
and thus, for all integers $1\leq k<m$ we have, with $C$ and $\epsilon_\star$ as above,
\begin{equation}\label{eq:sig}
\begin{split}
|\sigma_{m}^2-\sigma_{k}^2|=\Big|\sum_{s=k}^{m-1}\frac{\rho_{s}^2\sigma^2}{1+(c+c_{s})\sigma^2}\Big|\leq \frac{\sigma^2}{1+c\sigma^2}\sum_{s=k}^{m-1}\rho_s^2&\leq \frac{\sigma^2 C^2}{1+c\sigma^2}\sum_{s=k}^{m-1}\epsilon_\star^{2s}\leq \epsilon_\star^{2k}\frac{\sigma^2 C^2}{(1-\epsilon_\star)(1+c\sigma^2)}.
\end{split}
\end{equation}
Using \eqref{eq:sig} it is easily verified that   the sequence $(\sigma^2_k)_{k\geq 1}$ is Cauchy and therefore converges in $[0,\infty)$, and we let $\sigma^2_\infty=\lim_{k\rightarrow\infty}\sum_{s=0}^{k-1}\frac{\rho_{s}^2\sigma^2}{1+(c+c_{s})\sigma^2}$. Remark that since the sequence $(\sigma^2_k)_{k\geq 1}$ is non-decreasing and such that $\sigma^2_k\geq \sigma^2/(1+c\sigma^2)$ it follows that $\sigma^2_\infty>0$, and remark that, by \eqref{eq:sig}, we have
\begin{align*}
|\sigma^2_\infty-\sigma_k^2|\leq \epsilon_\star^{2k}\frac{\sigma^2 C^2}{(1-\epsilon_\star)(1+c\sigma^2)},\quad\forall k\geq 1.
\end{align*}
The proof of the lemma is complete.
\end{proof}

\subsection{Proof of Proposition \ref{prop:stability_model}\label{proof-prop:stability_model}}
\begin{proof}

The result of the proposition trivially holds if $\rho=0$ and therefore below we assume that $\rho\neq 0$.

Let $(c_t)_{t\geq 0}$, $(\rho_t)_{t\geq 0}$, $(\sigma_t)_{t\geq 0}$, $C_\star\in(0,\infty)$ and $\epsilon_\star\in(0,1)$ be as  in Lemma \ref{lemma:Q}, and for all $a\in\R$  let $f_a=\ind_{(-\infty,a]}$ and, for all $r\in\R$ and $s\in (0,\infty)$,  let $h_{a,r,s}$ be as defined in \eqref{h_def} and   $p_{a,s}=\Phi(a/s)$. 

We now let $a\in\R$ be fixed. Then, using \eqref{eq:seq_2} and  Lemma \ref{lemma:Q}, we have
\begin{align}\label{eq:h1}
\hat{\eta}_{t+1}(f_a)=\frac{\hat{\eta}_1 Q^{t}(f_a)}{\hat{\eta}_1 Q^{t}(\R)}=\Psi_{G_{c_t}}(\hat{\eta}_1)\big( M_{\rho_{t},\sigma_t^2}(f_a)\big)=\Psi_{G_{c_t}}(\hat{\eta}_1)( h_{a,\rho_{t},\sigma_t})+p_{a,\sigma_t},\quad\forall t\geq 1.
\end{align}
Using Lemma \ref{lemma:h_function} and the fact that $\inf_{t\geq 1}\sigma_t\geq \sigma_1>0$, it follows that  
\begin{align*}
| h_{a,\rho_{t},\sigma_t}(x) |& \leq  \frac{|x \rho_t|}{\sigma_1\sqrt{2\pi}},\quad\forall x\in\R,\quad\forall t\geq 1.
\end{align*}
By combining this latter result and \eqref{eq:h1},  and by using the fact that $\sup_{x\in\R}|x|G_{z}(x)\leq z^{-1/2} e^{-1/2}$ for all $z\in(0,\infty)$, it follows that, for all $t\geq 1$
\begin{equation}\label{eq:qp}
\begin{split}
\big|\hat{\eta}_{t+1}(f_a)-p_{a,\sigma_t}\big|\leq \big| \Psi_{G_{c_t}}(\hat{\eta}_1)( h_{a,\rho_{t},\sigma_t})\big|\leq  \frac{ |\rho_t|e^{-1/2}}{\hat{\eta}_1(G_{c_t})\sigma_1\sqrt{2\pi c_t}}
\leq  \frac{ |\rho_t| \sqrt{1+c\sigma^2}}{\hat{\eta}_1(G_{c_t})\sigma_1\sqrt{  c\rho^2 }} 
\end{split}
\end{equation}
where the last inequality  uses Lemma \ref{lemma:Q}. By   Lemma \ref{lemma:Q}  we have   $G_{c_t}(x)\geq G_{\rho^2/\sigma^2}(x)$ for all $x\in\R$ while $|\rho^t|\leq C_\star \epsilon_\star^t$ for all $t\geq 1$, and thus, by \eqref{eq:qp},
\begin{align}\label{eq:ppp1}
\big|\hat{\eta}_{t+1}(f_a)-p_{a,\sigma_t}\big|\leq C_1|\rho_t|\leq C_1 \epsilon_\star^t,\quad\forall t\geq 1
\end{align}
with 
\begin{align*}
C_1=\frac{C_\star  \sqrt{1+c\sigma^2}}{\hat{\eta}_1(G_{\rho^2/\sigma^2})\sigma_1\sqrt{  c\rho^2 }}.
\end{align*}

On the other hand, using  the mean value theorem, for all $t\geq 1$ there exists a constant  $v_{t,a}\in [-1,1] $ such that
\begin{align*} 
|p_{a,\sigma_t}-p_{a,\sigma_\infty}|&\leq   \frac{|\sigma_t-\sigma_\infty|}{ \sigma_t+v_{t,a}(\sigma_t-\sigma_\infty) }\,\, \frac{|a|}{\sigma_t+v_{t,a}(\sigma_t-\sigma_\infty)}\varphi\Big(\frac{a}{\sigma_t+v_{t,a}(\sigma_t-\sigma_\infty)}\Big).
\end{align*}
Together with the fact that $|x|\varphi(x)\leq 1$ for all $x\in\R$, and recalling $\inf_{t\geq 1}\sigma_t\geq \sigma_1>0$, this shows that
\begin{align}\label{eq:ppp2}
|p_{a,\sigma_t}-p_{a,\sigma_\infty}|\leq    \frac{|\sigma_t-\sigma_\infty|}{\sigma_1},\quad\forall t\geq 1.
\end{align}
By    Lemma \ref{lemma:Q}, we have  $|\sigma^2_t-\sigma^2_\infty| \leq C^2_\star\epsilon_\star^{2t}$ for all $t\geq 1$ and thus, using the fact that $xy\leq (x^2+y^2)/2$ for all $(x,y)\in\R^2$, we have
\begin{align}\label{eq:ppp3}
|\sigma_t-\sigma_\infty|^2\leq  |\sigma^2_t-\sigma^2_\infty| \leq C^2_\star\epsilon_\star^{2t},\quad\forall t\geq 1 .
\end{align}
Noting that in the above calculations the constant $a\in\R$ is arbitrary, by combining \eqref{eq:ppp1}, \eqref{eq:ppp2} and \eqref{eq:ppp3}  we obtain that
\begin{align*}
\|\hat{\eta}_{t+1}-\mathcal{N}(0,\sigma^2_\infty)\|=\sup_{a\in\R}\big|\hat{\eta}_{t+1}(f_a)-p_{a,\sigma_\infty}\big|\leq \big(C_1+C_\star/\sigma_1\big) \epsilon_\star^t,\quad\forall t\geq 1.
\end{align*}

To complete the proof of the proposition it remains to show that
\begin{align}\label{eq:ppp4}
\|\eta_{t+1}-\mathcal{N}(0,\rho^2\sigma^2_\infty+\sigma^2)\|\leq \|\hat{\eta}_t-\mathcal{N}(0,\sigma^2_\infty)\|,\quad\forall t\geq 1.
\end{align}
To this aim we let $\hat{\eta}_\infty=\mathcal{N}(0,\sigma^2_\infty)$ and note that, for all $t\geq 1$, we have
\begin{align}\label{eq:pen}
\|\eta_{t+1}-\mathcal{N}(0,\rho^2\sigma^2_\infty+\sigma^2)\|&=\|\hat{\eta}_{t}M-\hat{\eta}_\infty M\|=\sup_{a\in\R}|\hat{\eta}_{t}(h_{a,\rho,\sigma})-\hat{\eta}_\infty(h_{a,\rho,\sigma})|.
\end{align}
Since $\sup_{a\in\R}V(h_{a,\rho,\sigma})\leq 1$ by Lemma \ref{lemma:h_function}, it follows from Lemma \ref{lemma:KH_in} that
\begin{align*}
\sup_{a\in\R}|\hat{\eta}_{t}(h_{a,\rho,\sigma})-\hat{\eta}_\infty(h_{a,\rho,\sigma})|\leq \|\hat{\eta}_{t}-\hat{\eta}_\infty\|
\end{align*}
and \eqref{eq:ppp4} then follows from \eqref{eq:pen}. The proof of the proposition is complete.
\end{proof}

\subsection{Proof of Lemma \ref{lemma:key1}\label{sub-p-lemma:key1}}

\subsubsection{Two preliminary results}

From Lemma \ref{lemma:KH_in} we readily obtain the following discrepancy bound for 1-dimensional importance sampling:
\begin{corollary}\label{cor:IS}
Let $H\in\mathcal{C}^1(\R)$ be such that $V(H)<\infty$ and such that $H(x)\geq 0$ for all $x\in\R$, and let $\mu,\pi\in\mathcal{P}(\R)$. Then,
\begin{align*}
\|\Psi_H(\mu)-\Psi_H(\pi)\|\leq\frac{ \|H\|_\infty+2V(H) }{\pi(H)}\|\mu-\pi\|.
\end{align*}
\end{corollary}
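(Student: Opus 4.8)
The plan is to work entirely at the level of cumulative distribution functions, since the Kolmogorov distance is $\|\Psi_H(\mu)-\Psi_H(\pi)\|=\sup_{a\in\R}|F_{\Psi_H(\mu)}(a)-F_{\Psi_H(\pi)}(a)|$. For a fixed $a\in\R$ I would write $g_a=\ind_{(-\infty,a]}H$, so that $F_{\Psi_H(\mu)}(a)=\mu(g_a)/\mu(H)$ and likewise for $\pi$. The whole argument then reduces to bounding $|\mu(g_a)/\mu(H)-\pi(g_a)/\pi(H)|$ uniformly in $a$, by passing the two differences $\mu(g_a)-\pi(g_a)$ and $\mu(H)-\pi(H)$ through the one-dimensional Koksma--Hlawka inequality of Lemma \ref{lemma:KH_in}.

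First I would perform the standard self-normalized importance-sampling decomposition, arranging the denominator to be $\pi(H)$ (which appears in the target bound) rather than $\mu(H)$ (which cannot be controlled from below). Putting the difference over the common denominator $\mu(H)\pi(H)$ and then adding and subtracting $\mu(H)\mu(g_a)$ in the numerator gives
\begin{align*}
F_{\Psi_H(\mu)}(a)-F_{\Psi_H(\pi)}(a)=\frac{\mu(g_a)}{\mu(H)}\cdot\frac{\pi(H)-\mu(H)}{\pi(H)}+\frac{\mu(g_a)-\pi(g_a)}{\pi(H)}.
\end{align*}
The decisive gain is that $\mu(g_a)/\mu(H)=F_{\Psi_H(\mu)}(a)\in[0,1]$ (because $0\le g_a\le H$ pointwise), so the first term is at most $|\pi(H)-\mu(H)|/\pi(H)$, and both contributions now carry the factor $\pi(H)^{-1}$.

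It then remains to bound the two numerators. Applying Lemma \ref{lemma:KH_in} with $f=H$ on $(-\infty,\infty)$ --- where both boundary terms drop by the conventions of that lemma, since $F_\mu-F_\pi\to0$ at $\pm\infty$ --- yields $|\mu(H)-\pi(H)|\le V(H)\|\mu-\pi\|$. Applying it with $f=H$ on the half-line $(-\infty,a]$ yields $|\mu(g_a)-\pi(g_a)|\le\big(|H(a)|+V(\ind_{(-\infty,a)}H)\big)\|\mu-\pi\|\le(\|H\|_\infty+V(H))\|\mu-\pi\|$, using $|H(a)|\le\|H\|_\infty$ and $V(\ind_{(-\infty,a)}H)=\int_{-\infty}^a|H'|\le V(H)$. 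Substituting both estimates and taking the supremum over $a\in\R$ produces exactly the constant $\|H\|_\infty+2V(H)$. The only genuinely delicate step is this second application of Lemma \ref{lemma:KH_in}: one must notice that truncating the weight $H$ at $a$ introduces the boundary value $H(a)$, which is precisely why $\|H\|_\infty$ (and not merely $V(H)$) must appear in the final constant; the remainder is routine bookkeeping.
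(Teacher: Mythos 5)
Your proof is correct and takes essentially the same route as the paper: the paper's (terse) proof rests on exactly your decomposition, namely the inequality $|\Psi_H(\mu)(B)-\Psi_H(\pi)(B)|\leq \big(|(\mu-\pi)(H\ind_B)|+|(\mu-\pi)(H)|\big)/\pi(H)$, followed by the same two applications of Lemma \ref{lemma:KH_in} (to $H$ on all of $\R$, giving $V(H)$, and to $H$ truncated at $a$, giving $\|H\|_\infty+V(H)$ from the boundary term). Your write-up simply makes explicit the algebra and the boundedness of $\mu(g_a)/\mu(H)$ that the paper leaves implicit.
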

\begin{proof}
The result follows from Lemma \ref{lemma:KH_in} and the fact that, for all interval $B=(-\infty,a)\subset\R$, we have
\begin{align*}
|\Psi_{H}(\mu)(B)-\Psi_{H}(\pi)(B)|\leq \frac{|(\mu-\pi)(H\ind_B)|+|(\mu-\pi)(H)|}{\pi(H)}.
\end{align*}
\end{proof}

The next lemma is the key intermediate result used to prove Lemma \ref{lemma:key1}:

\begin{lemma}\label{lemma:key0}

Let $I\subseteq\R$ be a compact set. Then, there exists  a constant $C_I\in (1,\infty)$  such that, for any sequence $(\mu_t)_{t\geq 1}$ and letting $\hat{\mu}_t=\Psi_G(\mu_t)$ for all $t\geq 1$, for any $T\geq 1$ we have
\begin{align*}
\ \max\Big\{\sup_{t\in\{1,\dots,T\}} \|\hat{\mu}_t-\hat{\eta}_t\|, \sup_{t\in\{1,\dots,T\}} \|\mu_t-\eta_t\|\Big\}\leq \frac{C_I}{\iota_{T-1}} \Delta_{T} \log\big(1+\Delta_{T}^{-1}\big)
\end{align*}
 where  $\iota_{T-1}=\inf_{t\in\{1,\dots,T-1\}} \mu_t(I)$ and  $\Delta_{T}=\sup_{t\in\{1,\dots,T\}}\|\mu_t-\hat{\mu}_{t-1}M)\|$, with the convention   that $\iota_0=1$  and that $\hat{\mu}_{0}M=\eta_1$.

\end{lemma}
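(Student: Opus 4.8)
The plan is to transfer the whole problem onto the exact predictive semigroup $\Phi$ and to exploit the geometric loss of memory of $\Phi$ encoded in the decay $|\rho_k|\le C_\star\epsilon_\star^k$ of Lemma \ref{lemma:Q}. First I would reduce the filtering bound to the predictive one. Since $\hat\mu_t=\Psi_G(\mu_t)$ and $\hat\eta_t=\Psi_G(\eta_t)$, Corollary \ref{cor:IS} applied with $\pi=\eta_t$ gives $\|\hat\mu_t-\hat\eta_t\|\le(\|G\|_\infty+2V(G))\,\eta_t(G)^{-1}\|\mu_t-\eta_t\|$, where the normalising constant is attached to the \emph{true} predictive law $\eta_t$, which is bounded below uniformly in $t$ by Proposition \ref{prop:stability_model} (indeed $|\eta_t(G)-\mathcal N(0,\rho^2\sigma^2_\infty+\sigma^2)(G)|\le V(G)\|\eta_t-\mathcal N(0,\rho^2\sigma^2_\infty+\sigma^2)\|\to0$). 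Choosing $\pi=\eta_t$ rather than $\pi=\mu_t$ is essential: it leaves a single model-dependent constant in the denominator and so avoids an extra factor $\iota_{T-1}^{-1}$. It therefore suffices to bound $\sup_{t\le T}\|\mu_t-\eta_t\|$.

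Next I would fix $t\le T$, set $\xi_s=\mu_s-\Phi(\mu_{s-1})$ so that $\|\xi_s\|\le\Delta_T$ (with the convention $\Phi(\mu_0)=\eta_1$), and telescope along the exact flow,
\[
\mu_t-\eta_t=\sum_{s=1}^{t}\Big(\Phi^{t-s}(\mu_s)-\Phi^{t-s}\big(\Phi(\mu_{s-1})\big)\Big),
\]
which is valid because $\Phi^{t-s}(\Phi(\mu_{s-1}))=\Phi^{t-s+1}(\mu_{s-1})$, so the sum collapses to $\mu_t-\Phi^{t}(\mu_0)=\mu_t-\eta_t$. From Lemma \ref{lemma:Q} one obtains, for every $k\ge1$, the explicit mixture representation $\Phi^{k}(\nu)((-\infty,a])=\nu(\bar g_k\bar\phi_{a,k})/\nu(\bar g_k)$ with $\bar g_k=G_{c+c_{k-1}}$, $\bar\phi_{a,k}(x)=\Phi((a-\rho\rho_{k-1}x)/\bar s_k)$ and $\bar s_k^2=\rho^2\sigma^2_{k-1}+\sigma^2$; thus the dependence of $\Phi^k(\nu)$ on the starting measure $\nu$ enters only through a Gaussian mixture whose slope $\rho\rho_{k-1}$ decays geometrically.

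The heart of the argument is a two-regime bound on each increment. Writing $\bar h_{a,k}=h_{a,\rho\rho_{k-1},\bar s_k}$ and $\Phi^{k}(\nu)((-\infty,a])=\Phi(a/\bar s_k)+\nu(\bar g_k\bar h_{a,k})/\nu(\bar g_k)$, the ratio identity
\[
\frac{\nu(\bar g\bar h)}{\nu(\bar g)}-\frac{\nu'(\bar g\bar h)}{\nu'(\bar g)}=\frac{(\nu-\nu')(\bar g\bar h)}{\nu(\bar g)}+\frac{\nu'(\bar g\bar h)}{\nu'(\bar g)}\,\frac{(\nu'-\nu)(\bar g)}{\nu(\bar g)},
\]
together with the one-dimensional Koksma--Hlawka inequality (Lemma \ref{lemma:KH_in}) applied to $\bar g_k\bar h_{a,k}$ and to $\bar g_k$ — both vanish at $\pm\infty$, and their total variations are uniform in $k$ and $a$ via $V(\bar g_k\bar h_{a,k})\le\|G\|_\infty V(\bar h_{a,k})+\|\bar h_{a,k}\|_\infty V(G_{c+c_{k-1}})\le3$ using $V(h_{a,\cdot,\cdot})\le1$ from Lemma \ref{lemma:h_function} — yields a non-expansive estimate $\|\Phi^k(\nu)-\Phi^k(\nu')\|\le C\,\nu(\bar g_k)^{-1}\|\nu-\nu'\|$. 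The pointwise bound $|\bar h_{a,k}(x)|\le|\rho\rho_{k-1}x|/(\bar s_k\sqrt{2\pi})$ of Lemma \ref{lemma:h_function}, with $\sup_x|x|G_{c+c_{k-1}}(x)<\infty$, in turn yields a forgetting estimate $\|\Phi^k(\nu)-\Phi^k(\nu')\|\le C|\rho_{k-1}|(\nu(\bar g_k)^{-1}+\nu'(\bar g_k)^{-1})$. Since $c_{k-1}\in[0,\rho^2/\sigma^2]$ by Lemma \ref{lemma:Q}, the masses $\nu(\bar g_k)\ge(\inf_{x\in I}G_{c+\rho^2/\sigma^2}(x))\,\nu(I)$ are bounded below by a constant times $\iota_{T-1}$ uniformly in $k$, once one verifies $\mu_s(I)\ge\iota_{T-1}$ for $s\le t-1$ and $\Phi(\mu_{s-1})(I)\gtrsim\iota_{T-1}$ (the latter from $\hat\mu_{s-1}(I)\ge(\inf_IG)\mu_{s-1}(I)$ followed by one mutation step, the initial term $\nu'=\eta_1$ being handled by the fixed lower bound $\eta_1(\bar g_k)\ge\eta_1(G_{c+\rho^2/\sigma^2})>0$). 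Hence every increment with $k=t-s\ge1$ is at most $C_I\iota_{T-1}^{-1}\min\{\Delta_T,\epsilon_\star^{k}\}$, while the $k=0$ term is simply $\|\xi_t\|\le\Delta_T$.

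Summing, $\|\mu_t-\eta_t\|\le C_I\iota_{T-1}^{-1}\sum_{k\ge0}\min\{\Delta_T,\epsilon_\star^{k}\}$; splitting the series at $k^\star\asymp\log(1/\Delta_T)/\log(1/\epsilon_\star)$ bounds it by a constant multiple of $\Delta_T\log(1+\Delta_T^{-1})$, and taking the supremum over $t\le T$ (using monotonicity of $x\mapsto x\log(1+x^{-1})$ and of $t\mapsto\iota_{t-1}^{-1},\Delta_t$) gives the predictive bound; the filtering bound then follows from the first step. I expect the main obstacle to be exactly this two-regime increment estimate: producing the clean $\min\{\|\nu-\nu'\|,\epsilon_\star^k\}$ with only a \emph{single} power of $\iota_{T-1}^{-1}$. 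Working with the predictive rather than the filtering recursion is what removes the second normalisation that would otherwise generate a spurious $\iota_{T-1}^{-2}$, and keeping the total variations $V(\bar g_k\bar h_{a,k})$ and the lower bounds on $\nu(\bar g_k)$ uniform in both $k$ and $a$ is the delicate part of the bookkeeping.
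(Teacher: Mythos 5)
Your proof is correct, and at its core it is the paper's own argument run in dual form; since the decomposition and the order of deduction differ, here is the comparison. The paper telescopes along the filtering flow $\phi(\eta)=\Psi_G(\eta M)$ applied to the $\hat{\mu}_p$'s, writes $\phi^{t-p}(\hat{\mu}_p)$ and $\phi^{t-p}(\phi(\hat{\mu}_{p-1}))$ as $\Psi_{G_{c+c_{t-p}}}(\mu_p)$ and $\Psi_{G_{c+c_{t-p}}}(\hat{\mu}_{p-1}M)$ integrated against a Gaussian c.d.f.\ (this is immediate from Lemma \ref{lemma:Q}, since the filtering semigroup is driven by $Q^{t-p}$), establishes the filtering bound first, and then deduces the predictive bound from the one-step contraction $\|\hat{\mu}_{t-1}M-\hat{\eta}_{t-1}M\|\le\|\hat{\mu}_{t-1}-\hat{\eta}_{t-1}\|$ given by Lemmas \ref{lemma:h_function} and \ref{lemma:KH_in}. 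You instead telescope along the predictive flow $\Phi$ applied to the $\mu_s$'s, which requires the analogous representation of $\Phi^k$ — your formula (weight $G_{c+c_{k-1}}$, slope $\rho\rho_{k-1}$, variance $\rho^2\sigma_{k-1}^2+\sigma^2$) is correct and follows by composing $Q^{k-1}$ with one further $M$ step — you establish the predictive bound first, and then recover the filtering bound from Corollary \ref{cor:IS} applied with $\pi=\eta_t$, whose normalising constant $\eta_t(G)$ is bounded below by a model constant via Proposition \ref{prop:stability_model} and Lemma \ref{lemma:KH_in}. Beyond this dualization the two proofs coincide: the same two-regime increment estimate (stability of order $\Delta_T$ with a single factor $\iota_{T-1}^{-1}$, via the ratio identity that underlies Corollary \ref{cor:IS} together with uniform variation bounds; forgetting of order $\epsilon_\star^{k}$ via the pointwise bound of Lemma \ref{lemma:h_function} and $\sup_x|x|G_a(x)<\infty$), the same normalisation lower bounds (your treatment of $\Phi(\mu_{s-1})$, of the convention $\Phi(\mu_0)=\eta_1$, and your restriction to indices $\le T-1$ exactly mirror the paper's constants $\iota'$ and $\iota''$), and the same split-and-optimize summation yielding $\Delta_T\log\big(1+\Delta_T^{-1}\big)$. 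What your variant buys is the cleaner filtering-from-predictive reduction with a model-constant denominator (correctly avoiding a spurious second power of $\iota_{T-1}^{-1}$); what the paper's buys is that its semigroup representation comes for free from $Q^k$ without the extra composition step.
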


\begin{proof}
 Let $(\mu_t)_{t\geq 1}$ and $(\hat{\mu}_t)_{t\geq 1}$ be as in the statement of the lemma,  $(c_t)_{t\geq 0}$, $(\rho_t)_{t\geq 0}$ and $(\sigma_t)_{t\geq 0}$ be as defined in Lemma \ref{lemma:Q}, and  for all $a\in\R$  let $f_a=\ind_{(-\infty,a]}$ and 
\begin{align*}
g_{a,k}(x)=\Phi\Big(\frac{a- \rho_{k} x}{\sigma_{k}}\Big),\quad h_{a,k}(x)=\Phi\Big(\frac{a- \rho_{k} x}{\sigma_{k}}\Big)-\Phi\Big(\frac{a}{\sigma_{k}}\Big),\quad\forall x\in\R,\quad\forall k\geq 1.
\end{align*}
Next, we let $\phi:\mathcal{P}(\R)\rightarrow  \mathcal{P}(\R)$ be such that $\phi(\eta)=\Psi_G(\eta M)$ for all $\eta\in\mathcal{P}(\R)$, and for all $t\geq 1$  we let $\phi^t=\phi^{t-1}\phi$ with the convention that $\phi^{0}(\eta)=\eta$ for all $\eta\in\mathcal{P}(\R)$.  Note that $\hat{\eta}_{t+1}=\phi^t(\hat{\eta}_1)$ for all $t\geq 1$.

 Using this notation, and following the approach  introduced by  \citet[][Chapter 7]{DelMoral:book} for studying PFs,  we have,   using the convention  that  $\phi(\hat{\mu}_0)=\hat{\eta}_1$,
\begin{align}\label{eq:decomp1}
 \hat{\mu}_t-\hat{\eta}_t &=\sum_{p=1}^t\Big(\phi^{t-p}(\hat{\mu}_p)-\phi^{t-p}\big(\phi (\hat{\mu}_{p-1})\big)\Big),\quad\forall t\geq 1 
\end{align}
and we now study the  terms inside the sums.
 
To this aim, we assume for now    that $T\geq 3$ , and  we let $t\in\{2,\dots,T\}$ and $p\in\{1,\dots,t-1\}$ be fixed. Then, for all $a\in\R$ we have, by Lemma \ref{lemma:Q},
\begin{equation}\label{eq:decomp2}
\begin{split}
\phi^{t-p}(\hat{\mu}_p)(f_a)&=\frac{\hat{\mu}_p Q^{t-p}(f_a)}{\hat{\mu}_p Q^{t-p}(\R)}=\Psi_{G_{c+c_{t-p}}}(  \mu_{p})(g_{a,t-p})
\end{split}
\end{equation}
and 
\begin{align}\label{eq:decomp3}
\phi^{t-p}\big(\phi(\hat{\mu}_{p-1}))(f_a)=\Psi_{G_{c+c_{t-p}}}\big(\hat{\mu}_{p-1}M\big)(g_{a,t-p}).
\end{align}
By combining \eqref{eq:decomp2} and \eqref{eq:decomp3}, we obtain that 
\begin{equation}\label{eq:key_term}
\begin{split}
\sup_{a\in\R}\big|\phi^{t-p}(\hat{\mu}_p)(f_a)-&\phi^{t-p} \big( \phi_{p}(\hat{\mu}_{p-1})\big)(f_a)\big| \\
&= \sup_{a\in\R}\Big|\Psi_{G_{c+c_{t-p}}}(  \mu_{p})(g_{a,t-p})-\Psi_{G_{c+c_{t-p}}}\big(\hat{\mu}_{p-1}M\big)(g_{a,t-p})\Big|\\
&=\sup_{a\in\R}\Big|\Psi_{G_{c+c_{t-p}}}(  \mu_{p})(h_{a,t-p})-\Psi_{G_{c+c_{t-p}}}\big(\hat{\mu}_{p-1}M\big)(h_{a,t-p})\Big|.
\end{split}
\end{equation}
To proceed further we note  that, by Lemma \ref{lemma:h_function}, we have $\sup_{(a,k)\in \R\times\mathbb{N}} V(h_{a,k})\leq 1$ and thus,  by Lemma \ref{lemma:KH_in},
\begin{equation}\label{eq:p2}
\begin{split}
 \sup_{a\in\R}\Big|\Psi_{G_{c+c_{t-p}}} (\mu_{p} )(h_{a,t-p})  -\Psi_{G_{c+c_{t-p}}}&\big( \hat{\mu}_{p-1}M\big)(h_{a,t-p})\Big|\leq \Big\|\Psi_{G_{c+c_{t-p}}} (\mu_{p})-\Psi_{G_{c+c_{t-p}}}\big(\hat{\mu}_{p-1}M\big)\Big\|.
\end{split}
\end{equation}
Noting that
\begin{align}\label{eq:G_var}
\|G_a\|_\infty=1,\quad V(G_a)=\int_\R |G'_a(x)|\dd x= a\int_\R |x| G_a(x)\dd x=2,\quad\forall a\in (0,\infty)
\end{align}
it follows from Corollary \ref{cor:IS}   that
\begin{equation}\label{eq:G_var2}
\begin{split}
\big\|\Psi_{G_{c+c_{t-p}}}(\mu_{p})-\Psi_{G_{c+c_{t-p}}}\big(\hat{\mu}_{p-1}M \big)\Big\| \leq  \frac{5\|  \mu_{p}-\hat{\mu}_{p-1}M\|}{\mu_{p}(G_{c+c_{t-p}})}&\leq  \frac{5\| \mu_{p}-\hat{\mu}_{p-1}M\|}{ \mu_p(G_{c+c_{\star}})}\leq\frac{5 \Delta_{T}}{\mu_p(G_{c+c_{\star}})}
\end{split}
\end{equation}
where the second inequality uses the fact that, by Lemma \ref{lemma:Q}, we have  $\sup_{k\geq 1} c_k\leq c_\star:=\rho^2/\sigma^2$. Noting that since $p<T$ we have $\mu_p(G_{c+c_{\star}})\geq \iota'\iota_{T-1}$, with  $\iota'=\inf_{x\in I}G_{c+c_\star}(x)>0$, it follows from \eqref{eq:G_var2} that 
%\begin{align}\label{eq:kappa_p}
%\inf_{k\in\{1,\dots,T-1\}}\hat{\mu}_{k} M(G_{c+c_\star})\geq  \iota_{T-1}\inf_{x\in I}G(x)M(x,G_{c+c_\star})\geq \iota'\iota_{T-1}
%\end{align}
%it follows that 
\begin{equation*}
\begin{split}
\big\|\Psi_{G_{c+c_{t-p}}}(\mu_{p})-\Psi_{G_{c+c_{t-p}}}\big(\hat{\mu}_{p-1}M\big)\Big\|\leq  \frac{5 \Delta_{T}}{\iota'\iota_{T-1}}
\end{split}
\end{equation*}
which, together with \eqref{eq:p2}, shows that 
\begin{equation}\label{eq:p3}
\begin{split}
\sup_{a\in\R}\Big|\Psi_{G_{c+c_{t-p}}} (\mu_{p} )(h_{a,t-p})-\Psi_{G_{c+c_{t-p}}}\big(  \hat{\mu}_{p-1}M \big)&(h_{a,t-p})\Big|\leq  \frac{5\Delta_{T}}{\iota' \iota_{T-1}}.
\end{split}
\end{equation}

On the other hand, by   Lemma \ref{lemma:h_function} we have
\begin{align*}
|h_{a,k}(x)|\leq \frac{|\rho_{k}x|}{\sigma_{k}\sqrt{2\pi}},\quad\forall (a,x)\in\R^2,\quad\forall k\geq 1 
\end{align*}
and therefore, noting that $|x| G_a(x)\leq a^{-1/2}e^{-1/2}$ for all $x\in\R$ and all $a\in (0,\infty)$,  
\begin{align*}
 G_{c+c_{t-p}}(x)|h_{a,t-p}(x)|\leq   \frac{ |\rho_{t-p}| e^{-1/2}}{\sigma_{t-p}\sqrt{(c+c_{t-p})2\pi} } \leq  \frac{|\rho_{t-p}|}{\sigma_1\,\sqrt{c}  },\quad\forall a\in\R
\end{align*}
where the second inequality uses the fact that $\inf_{k\geq 1}\sigma^2_{k}=\sigma^2_1=\sigma^2/(1+c\sigma^2)>0$ while $\inf_{k\geq 1} c_k\geq 0$ by    Lemma \ref{lemma:Q}.

Using the latter result  and recalling that  $c_\star=\rho^2/\sigma^2\geq \sup_{k\geq 1} c_k$, it follows that we have both
\begin{align*}
\sup_{a\in\R}\Big|\Psi_{G_{c+c_{t-p}}}( \mu_{p})(h_{ a,t-p})\Big|\leq \frac{|\rho_{t-p}|}{\sigma_1\,\sqrt{c}\, \mu_p(G_{c+c_{t-p}}) }\leq \frac{|\rho_{t-p}| }{\sigma_1\,\sqrt{c}\, \mu_p(G_{c+c_{\star}}) }
\end{align*}
and 
\begin{align*}
\sup_{a\in\R}\Big|\Psi_{G_{c+c_{t-p}}} \big(\hat{\mu}_{p-1}M\big)(h_{a,t-p})\Big|\leq \frac{|\rho_{t-p}| }{\sigma_1\,\sqrt{c}\,  \hat{\mu}_{p-1}M(G_{c+c_{t-p}}) }\leq \frac{|\rho_{t-p}| }{\sigma_1\,\sqrt{c}\,  \hat{\mu}_{p-1}M(G_{c+c_{\star}}) } 
\end{align*}
where   
 $\hat{\mu}_{p-1}M(G_{c+c_{\star}})\geq \iota''\iota_{T-1}$ with $\iota''=\eta_1(G_{c+c_{\star}})\inf_{x\in I}G(x)M(x,G_{c+c_\star})$. (This definition of $\iota''$ allows to cover both the case where $p=1$ and the case where $p>1$.) Consequently, if we  define the constant $\bar{C}=\max\{5, 2/( c \sigma_1^2)^{1/2}\}$, then
\begin{equation}\label{eq:p4}
\begin{split}
\sup_{a\in\R} \Big| \Psi_{G_{c+c_{t-p}}}( \mu_{p})(h_{ a,t-p})  -\Psi_{G_{c+c_{t-p}}} \big( \hat{\mu}_{p-1}M\big)(h_{a,t-p})\Big|  \leq  \frac{\bar{C}}{\iota''\iota_{T-1}} |\rho_{t-p}|.
\end{split}
\end{equation}

By combining  \eqref{eq:key_term}, \eqref{eq:p3} and \eqref{eq:p4}, and recalling that $\bar{C}\geq 5$, we obtain that
\begin{equation}\label{eq:p5}
\begin{split}
\sup_{a\in\R}\big|\phi^{t-p}(\hat{\mu}_p)(f_a)- \phi^{t-p} \big( \phi_{p}( \hat{\mu}_{p-1})\big)(f_a)\big|&\leq  \frac{\bar{C}}{\iota''' \iota_{T-1}}\,|\rho_{t-p}|^\gamma  \Delta_{T} ^{1-\gamma},\quad\forall \gamma\in[0,1]
\end{split}
\end{equation}
with $\iota''=\min\{\iota',\iota''\}>0$.

The result in \eqref{eq:p5}   holds for all  $t\in\{2,\dots,T\}$ and all $p\in\{1,\dots,t-1\}$ (assuming that   $T\geq 3$), and we now show that it also holds when $p=t$ for some $t\in\{1,\dots,T\}$, and with $T\geq 1$ arbitrary.  To do so we  let $\gamma\in[0,1]$, $t\in\{1,\dots,T\}$ and $p=t$,   and we note that
\begin{align*}
\sup_{a\in\R}\big|\phi^{t-p}(\hat{\mu}_p)(f_a)- \phi^{t-p} \big( \phi(\hat{\mu}_{p-1})\big)(f_a)\big|&=\sup_{a\in\R}\big| \hat{\mu}_t(f_a)-   \phi( \hat{\mu}_{t-1})(f_a)\big|\\
&= \sup_{a\in\R}\big| \Psi_G(\mu_t)(f_a)-  \Psi_G( \hat{\mu}_{t-1}M ) (f_a)\big|\\
&\leq \frac{5\|\mu_t-\hat{\mu}_{t-1}M\|}{\hat{\mu}_{t-1}M(G_c)}\\
&\leq \frac{5 \Delta_{T}}{\hat{\mu}_{t-1}M(G_{c+c_\star})}
\end{align*}
where  the first inequality holds by Corollary \ref{cor:IS} and uses \eqref{eq:G_var}, and where the last inequality uses the fact that $c_\star=\rho^2/\sigma^2\geq 0$. 

Recalling that $\hat{\mu}_{k}M(G_{c+c_\star})\geq \iota''\iota_{T-1}\geq \iota'''\iota_{T-1}$ for all $k\in\{0,\dots,T-1\}$,  and   noting that  since we are assuming that $p=t$ we have $\rho_{t-p}=\rho_0=1$, it follows that
\begin{align*}
\sup_{a\in\R}\big|\phi^{p-t}(\hat{\mu}_p)(f_a)- \phi^{p-t} \big( \phi (\hat{\mu}_{p-1})\big)(f_a)\big|&\leq \bar{C} (\iota'''\iota_{T-1})^{-1}  \min\big\{1, \Delta_{ T} \big\}\\
&\leq \bar{C}(\iota'''\iota_{T-1})^{-1}  \min\big\{1,\Delta_{ T}^{1-\gamma}\big\}\\
&\leq \bar{C}(\iota'''\iota_{T-1})^{-1}  |\rho_{t-p}|^\gamma \Delta_{ T}^{1-\gamma} 
\end{align*}
showing that \eqref{eq:p5} holds for any integers $1\leq p\leq t\leq T$.

We now  let $C_\star\in(0,\infty)$ and $\epsilon_\star\in(0,1)$ be as in Lemma \ref{lemma:Q}, so that   $|\rho_k|\leq C_\star \epsilon_\star^k$ for all $k\geq 0$. In addition, we let $t\in\{1,\dots,T\}$ and $k\in\mathbb{N}$.  Then,  by using \eqref{eq:decomp1} and  \eqref{eq:p5}, and using the convention that empty sums equal zero, we have
 \begin{equation}\label{eq:final}
 \begin{split}
\|\hat{\mu}_t-\hat{\eta}_t\|&  = \sup_{a\in\R} \big|\hat{\mu}_t(f_a)-\hat{\eta}_t(f_a)\big|\\
&\leq   \sum_{p=1}^t\sup_{a\in\R}\Big|\phi^{t-p}(\hat{\mu}_p)(f_a)-\phi^{t-p}\big( \phi(\hat{\mu}_{p-1})\big)(f_a)\Big|\\
 &=  \sum_{p=1}^{t-k}\sup_{a\in\R}\Big|\phi^{t-p}(\hat{\mu}_p)(f_a)-\phi^{t-p}\big( \phi(\hat{\mu}_{p-1})\big)(f_a)\Big| \\
 &+  \sum_{p=t-k+1}^t\sup_{a\in\R}\Big|\phi^{t-p}(\hat{\mu}_p)(f_a)-\phi^{t-p}\big( \phi(\hat{\mu}_{p-1})\big)(f_a)\Big|\\
 &\leq \frac{\bar{C}}{\iota''' \iota_{T-1}}\bigg(   C_\star \sum_{p=1}^{t-k}\epsilon_\star^{t-p}+ k\Delta_{ T}\bigg)\\
&\leq \frac{\bar{C}}{\iota''' \iota_{T-1}}\bigg(\epsilon_\star^k   C_\star (1-\epsilon_\star)^{-1}+ k\Delta_{ T} \bigg).
 \end{split}
 \end{equation}
By applying  \eqref{eq:final} with
 \begin{align*}
 k=\bigg\lceil \frac{1}{\log(1/\epsilon_\star)}\log\bigg(\frac{C''\log(1/\epsilon_\star)}{\Delta_{T}}\bigg)\bigg\rceil ,\quad C'':=   C_\star (1-\epsilon_\star)^{-1},
 \end{align*}
  we obtain that \begin{align*}
\|\hat{\mu}_t  -\hat{\eta}_t\|&\leq \Delta_{ T}\,\bigg(\frac{\bar{C}}{\iota'''\iota_{T-1}  \log(1/\epsilon_\star)}\bigg)\bigg(2 +\log(C'')+\log\big(\log(1/\epsilon_\star)\big) - \log(\epsilon_\star\Delta_{ T}) \bigg)\\
&\leq C''' \Delta_{ T}\log(1+\Delta_{ T}^{-1})
\end{align*}
where the second inequality holds for some constant $C'''\in(0,\infty)$ depending only on   $C''$ and $\epsilon_\star$, and uses the fact that $\Delta_{ T}\in [0,1]$. The first part of the lemma follows.

To show the second part of the lemma assume first that $T\geq 2$, let $t\in\{ 2,\dots,T\}$, and note that  
\begin{align}\label{eq:p2p2}
\|\mu_t-\eta_t\|&\leq \|\mu_t -\hat{\mu}_{t-1}M\|+\|\hat{\mu}_{t-1} M-\hat{\eta}_{t-1}M\|\leq \Delta_{T}+\|\hat{\mu}_{t-1} M-\hat{\eta}_{t-1}M\|. 
\end{align}
Noting that for all $a\in\R$ we have
\begin{align*}
M(f_a)(x)=\Phi\bigg(\frac{a- \rho  x}{\sigma }\bigg)=:\tilde{g}_{a}(x),\quad\forall x\in\R
\end{align*}
where, using Lemma \ref{lemma:h_function}, $V(\tilde{g}_{a})=V(h_{a,\rho,\sigma})\leq 1$, it follows from Lemma \ref{lemma:KH_in} that 
\begin{align*}
\|\hat{\mu}_{t-1} M-\hat{\eta}_{t-1}M\|=\sup_{a\in\R}|\hat{\mu}_{t-1}(\tilde{g}_a)-\hat{\eta}_{t-1}(\tilde{g}_a)|\leq \|\hat{\mu}_{t-1}-\hat{\eta}_{t-1}\|.
\end{align*}
By combining this latter result with \eqref{eq:p2p2} we obtain that
\begin{align*}
\sup_{t\in\{2,\dots,T\}}\|\mu_t -\eta_t\|\leq \Delta_{ T}+ \sup_{t\in\{1,\dots,T\}} \|\hat{\mu}_{t}-\hat{\eta}_{t}\|.
\end{align*}
Then, the second part of  the lemma follows from the first part of the lemma, upon noting that $\|\hat{\mu}_1-\eta_1\|=\|\hat{\mu}_1- \hat{\mu}_{0}^N M\|\leq   \Delta_{T}$. This ends the proof of the lemma.
\end{proof}

\subsubsection{Proof of the lemma}

\begin{proof}[Proof of Lemma \ref{lemma:key1}]

Let $(\mu_t)_{t\geq 1}$ and $(\hat{\mu}_t)_{t\geq 1}$ be as in the statement of the lemma, and let $I=[-\xi,\xi]$ for some $\xi\in(0,\infty)$  such that $\eta_1(I)>0$. Then, by using Lemma \ref{lemma:Q}, we  can easily verify verify that $\iota_1:=\inf_{t\geq 1}\hat{\eta}_t(I)>0$, which implies that  $\iota_2:=\inf_{t\geq 2}\eta_t(I)\geq \iota_1\inf_{x\in I}M(x,I)>0$. Consequently, there exists a constant $\iota\in(0,1)$ such that
$\inf_{t\geq 1}\eta_t(I)\geq \iota$.

To prove the lemma   we let $\tilde{\Delta}_{T}=\sup_{t\in\{1,\dots,T\}}\|\mu_t-\hat{\mu}_{t-1}M\|_{\mathrm{D}}$ for all $T\geq 1$,  with the convention    that $\hat{\mu}_{0}M=\eta_1$, and we remark first that, for all $t\geq 1$,
\begin{align}\label{eq:useDelta}
\|\mu_t-\eta_t\|_{\mathrm{D}}\leq \iota/2\implies  \mu_t(I)=\eta_t(I)+\mu_t(I)-\eta_t(I)\geq \eta_t(I)-\|\mu_t-\eta_t\|_{\mathrm{D}}\geq  \iota/2.
\end{align}
Then, by using \eqref{eq:KD},  \eqref{eq:useDelta} and Lemma \ref{lemma:key0},   to prove the lemma it suffices to show that   there exists an $\epsilon\in (0,1)$ such that
\begin{align}\label{eq:show_imp}
\tilde{\Delta}_{T}\leq\epsilon\implies \sup_{t\in\{1,\dots,T\}}\|\mu_t-\eta_t\|_{\mathrm{D}}\leq \iota/2,\quad\forall T\geq 1.
\end{align}
To do so   we let $C_I\in [1,\infty)$ be as in the statement of Lemma \ref{lemma:key0} (when $I=[-\xi,\xi]$), $\delta\in (0,1)$ be arbitrary, and   $C'=\max\big\{1,\sup_{x\in (0,1]} x^{1-\delta}\log(1+1/x)\big\}<\infty$. With this notation in place, we show by induction on $T\geq 1$ that  \eqref{eq:show_imp} holds for $\epsilon=(\iota^2/(8 C_I\, C'))^{1/\delta}$.

Since $\epsilon\leq \iota/2$ and $\tilde{\Delta}_1=\|\mu_1-\eta_1\|_{\mathrm{D}}$, the implication in \eqref{eq:show_imp}   holds when $T=1$, and we now assume that it holds when $T=s$ for some $s\geq 1$. Then, by using   \eqref{eq:useDelta}, it follows that $\mu_t(I)\geq \iota/2$ for all $t\in\{1,\dots,s\}$ and thus, by   Lemma \ref{lemma:key0} and using \eqref{eq:KD}, we have
\begin{align}\label{eq:dis1}
\sup_{t\in\{1,\dots,s+1\}} \|\mu_t- \eta_t\|_{\mathrm{D}}\leq 2 \sup_{t\in\{1,\dots,s+1\}} \|\mu_t-\eta_t\|_{\mathrm{D}} \leq \frac{ 4C_I}{\iota} \Delta_{s+1} \log(1+\Delta_{s+1}^{-1}) 
\end{align}
where
\begin{align}\label{eq:dis2}
\frac{ 4C_I}{\iota} \Delta_{s+1} \log(1+\Delta_{s+1}^{-1})\leq  \frac{\iota}{2} \Leftrightarrow  \Delta_{s+1} \log(1+\Delta_{s+1}^{-1})\leq \frac{\iota^2}{8 C_I}.
\end{align}
By the definition of the constant $C'$, we have $\Delta_{s+1} \log(1+\Delta_{s+1}^{-1})\leq C'\,  \Delta_{s+1}^{\delta}\leq  C'\,  \tilde{\Delta}_{s+1}^{\delta}$ which, together with \eqref{eq:dis1} and \eqref{eq:dis2}, shows that $\tilde{\Delta}_{s+1}\leq\epsilon\implies \sup_{t\in\{1,\dots,s+1\}}\|\mu_t-\eta_t\|_{\mathrm{D}}\leq \iota/2$, showing that \eqref{eq:show_imp} holds for $T=s+1$. This concludes the proof of \eqref{eq:show_imp} and thus of the lemma.

\end{proof}

\bibliographystyle{apalike}
\bibliography{complete}

\end{document}